\numberwithin{equation}{section}
\newcommand{\R}{\mathbb{R}}
\newcommand{\A}{\mathcal{A}}
\newcommand{\G}{\mathcal{G}}
\newcommand{\T}{\mathbb{T}}
\newcommand{\Z}{\mathbb{Z}}
\newcommand{\eps}{\epsilon}
\newcommand{\ib}{b^{-1}}
\numberwithin{equation}{section} %pour numeroter les equations par section
\newcommand{\supp}{\text{supp}}
\newcommand{\la}{\langle}
\newcommand{\ra}{\rangle}
\newtheorem{theorem}{Theorem}[section]
\newtheorem{lemma}[theorem]{Lemma}
\newtheorem{proposition}[theorem]{Proposition}
\newtheorem{definition}[theorem]{Definition}
\newtheorem{remark}[theorem]{Remark}
\DeclareMathOperator{\sign}{sign}
\newcommand{\norm}[1]{\left\lVert#1\right\rVert}
\begin{document}
	
	\title{Singularity structures of linear inviscid damping in a channel}
	
	%\author{Alexandru D. Ionescu}
	%\address{Princeton University}
	%\email{aionescu@math.princeton.edu}
	
	%\author{Hao Jia}
	%\address{University of Minnesota}
	%\email{jia@umn.edu}
	
	\author{Wenjie Lu}
	\address{University of Minnesota}
	\email{lu000005@umn.edu}
	%\thanks{The first author was supported in part by NSF grant DMS-1600028 and by NSF-FRG grant DMS-1463753.  The second author was supported in part by DMS-1600779}
	
	\begin{abstract}
		{\small}
		This paper studies singularity structures of the linear inviscid damping of two-dimensional Euler equations in a finite periodic channel. We introduce a recursive definition of singularity structures which characterize the singularities of the spectrum density function from different sources: the free part and the boundary part of the Green function. As an application, we demonstrate that the stream function exhibits smoothness away from the channel's boundary, yet it presents singularities in close proximity to the boundary. The singularities arise due to the interaction of boundary and interior singularities of the spectrum density function. We also show that the behavior of the initial data and background flow have an impact on the regularity of different components of the stream function.
	\end{abstract}
	
	\maketitle
	
	\setcounter{tocdepth}{1}
	
	%\tableofcontents
	
	\section{Introduction}
\subsection{Main equations} 
	Consider the two dimensional Euler equation linearized around a shear flow $(b(y),0)$, in the periodic channel $ (x,y)\in \mathbb{T}\times[0,1]$:
	\begin{equation}\label{Main1}
		\begin{split}
			&\partial_t\omega+b(y)\partial_x\omega-b''(y)u^y=0,\\
			&{\rm div}\,u=0\qquad{\rm and}\qquad \omega=-\partial_yu^x+\partial_xu^y,
		\end{split}
	\end{equation}
	with the natural non-penetration boundary condition $u^y|_{y=0,1}=0$. 
	
	For the linearized flow, 
	$\int\limits_{\mathbb{T}\times[0,\,1]}u^x(x,y,t)\,dxdy$ and $ \int\limits_{\mathbb{T}\times[0,\,1]}\omega(x,y,t)\,dxdy$
	are conserved quantities. In this paper, we will assume that
 \begin{equation}\label{EQ:132}
     \int_{\mathbb{T}\times[0,1]}u_0^x(x,y)\,dxdy=\int_{\mathbb{T}\times[0,1]}\omega_0\,dx dy=0.
 \end{equation}
	These assumptions can be dropped by adjusting $b(y)$ with a linear shear flow $C_0y+C_1$.
	Then one can see from the divergence free condition on $u$ that 
	there exists a stream function $\psi(t,x,y)$ with $\psi(t,x,0)=\psi(t,x,1)\equiv 0$, such that 
	\begin{equation}\label{eqS1}
		u^x=-\partial_y\psi,\,\,u^y=\partial_x\psi.
	\end{equation}
	The stream function $\psi$ can be solved through
	\begin{equation}\label{eq:equationStream}
		\Delta\psi=\omega, \qquad \psi|_{y=0,1}=0.
	\end{equation}
	We summarize our equations as follows
	\begin{equation}\label{main}
		\left\{\begin{array}{ll}
			\partial_t\omega+b(y)\partial_x\omega-b''(y)\partial_x\psi=0,&\\
			\Delta \psi(t,x,y)=\omega(t,x,y),\qquad \psi(t,x,0)=\psi(t,x,1)=0,&\\
			(u^x,u^y)=(-\partial_y\psi,\partial_x\psi),&\\
			\omega(0, x, y) = \omega_0(x,y).&
		\end{array}\right.
	\end{equation}
	for $ t\ge0, (x,y)\in\mathbb{T}\times[0,1]$. 
	
	An important property for (\ref{main}) is that the evolution for each mode is de-coupled. Indeed, define for $k \in \mathbb{Z}$, $y \in [0, 1]$ and $t\ge0$ the Fourier modes $\omega_k, \psi_k$ for the vorticity and stream functions $\omega$ and $\psi$ as
	\begin{equation}
		\begin{split}
			\omega_k(t, y) &:= \int_{\T}e^{-ikx}\omega(t, x, y)\,dx, \quad \omega^k_0(y) := \int_{\T}e^{-ikx}\omega_0(x, y)\,dx, \\ \quad \psi_k(t, y) &:= \int_{\T}e^{-ikx}\psi(t, x, y)\,dx.
		\end{split}
	\end{equation}
	Then the equation \eqref{main} can be written as
	\begin{equation}\label{maink}
		\left\{\begin{array}{ll}
			\partial_t\omega_k+ikb(y)\omega_k-ikb''(y)\psi_k = 0,&\\
			(\partial_y^2 - k^2) \psi_k(t,y)=\omega_k(t,y),\qquad \psi_k(t,0)=\psi(t,1)=0,&\\
			\omega_k(0,y) = \omega_0^k(y),&
		\end{array}\right.
	\end{equation}
	for $k\in\Z, y\in[0,1], t\ge0$. For each $k \in \mathbb{Z}\setminus\{0\}$, we set for any $g \in L^2(0,1),$
	\begin{equation} \label{F3.1}
		L_kg(y) = b(y)g(y) + b''(y)\int_0^1G_k(y, z)g(z)dz,
	\end{equation}
	where $G_k$ is the Green's function for the operator $k^2-\frac{d^2}{dy^2}$ on $(0, 1)$ with zero Dirichlet boundary condition. Our main assumption is that the linearized operator $L_k$ does not have embedded eigenvalues. Our goal is to obtain a precise asymptotic decomposition of the stream function $\psi_k(t,y), y\in [0,1]$, as $t\to+\infty$.

	\subsection{Background on hydrodynamic stability and inviscid damping}\label{intro to inviscid}

	Hydrodynamical stability is a classical topic in mathematical analysis of fluid flows, pioneered by prominent figures such as Rayleigh \cite{rayleigh1895stability}, Kelvin \cite{kelvin1887stability}, Orr \cite{orr1907stability}, among many others. The main focus was to study stability of important physically relevant flows, such as shear flows and vortices.
	
	In this paper we consider shear flows. There are extensive works on the linear stability property of these flows. In particular, Rayleigh \cite{rayleigh1895stability} proved that shear flows with no inflection points are spectrally stable. Orr \cite{orr1907stability} in 1907 observed the $t^{-1}$ decay rate of the velocity when the shear flow is Couette (linear shear), and Case \cite{case1960stability} provided a formal proof in the case of a finite channel. See also Lin and Zeng \cite{lin2011inviscid} for a sharp version with optimal dependence on the regularity of the initial data. 
	
	The observation of Orr can be described roughly as follows. Consider the linearized equation near Couette flow:
	$$\partial_t\omega+y\partial_x\omega=0, \,\,(x,y)\in\mathbb{T}\times \R.$$
	
	One can solve this equation explicitly and it follows that
	$\omega(t,x,y)=\omega_0(x-yt,y).$
	The equation for the stream function becomes
	$\Delta\psi(t,x,y)=\omega(t,x,y)=\omega_0(x-yt,y)$
	for $(x,y)\in \mathbb{T}\times \R$ and therefore
	\begin{equation}\label{decaycostderivative}
		\widetilde{\psi}(t,k,\xi)=-\frac{\widetilde{\omega}(t,k,\xi)}{k^2+|\xi|^2}=-\frac{\widetilde{\omega_0}(k,\xi+kt)}{k^2+|\xi|^2}.
	\end{equation}
	In the above, $\widetilde{h}$ denotes the Fourier transform of $h$ in $x,\,y$.
	Assume that $\omega_0$ is smooth, so $\widetilde{\omega}_0(k,\xi)$ decays fast in $k,\,\xi$. Then we can view $\xi$ as
	$$\xi=-kt+O(1),$$ 
	and hence $\widetilde{\psi}(t,k,\xi)$ decays like $|k|^{-2}\langle t\rangle^{-2}$ for each $k\neq 0$. Similarly, using the relations $u^x=-\partial_y\psi$ and $u^y=\partial_x\psi,$
	we conclude that $\widetilde{u^x}$ decays like $|k|^{-1}\langle t\rangle^{-1}$ and $\widetilde{u^y}$ decays like $|k|^{-1}\langle t\rangle^{-2}$ for all $k\neq0$. Hence, the velocity field decays to another shear flow $(u_{\infty}(y),0)$. 
	
	%Heuristically we can understand the observation of Orr as follows. In the frequency space, energy of the perturbed vorticity is transported to high frequencies. Since the stream function is recovered via a compact operator from the vorticity and compact operators damp high frequencies, one obtains decay of the stream function. From the physical space perspective, the decay is due to the ``mixing" of the vorticity which leads to averaged, weak convergence of the vorticity.

	For general monotone shear flows, the linearized operator becomes more complicated due to the extra term $b''(y)\partial_x\psi$, see \eqref{main}, which can not be treated as perturbations. Therefore spectral analysis of the linearized operator is required to understand the dynamical properties of the associated flow. For results on the general spectral property of the linearized operator, we refer to Faddeev \cite{faddeev1971theory} and Lin \cite{lin2003instability}. In the direction of inviscid damping, Stepin \cite{stepin1995nonself} proved $t^{-\nu}$ decay of the stream function associated with the continuous spectrum, Rosencrans and Sattinger \cite{rosencrans1966spectrum} proved $t^{-1}$ decay for analytic monotone shear flows. 
	
	Recently, inspired by the remarkable work of Bedrossian and Masmoudi \cite{bedrossian2015inviscid} on the nonlinear asymptotic stability of shear flows close to the Couette flow in $\mathbb{T}\times\R$ (see also an extension \cite{ionescu2020inviscid} to $\mathbb{T}\times[0,1]$), optimal decay estimates for the linear problem received much attention, see e.g.  Zillinger \cite{zillinger2016linear,zillinger2017linear} and references therein for shear flows close to Couette.  In an important work, Wei, Zhang and Zhao \cite{wei2018linear} obtained the optimal decay estimates for the linearized problem around monotone shear flows, under very general conditions. In \cite{jia2020linear} the first author identified the main term in the asymptotics of the stream function. 
		
	We also refer the reader to important developments for the linear inviscid damping in the case of non-monotone shear flows \cite{bouchet2010large,wei2019linear, wei2020linear,ionescu2022linear} and circular flows \cite{bedrossian2019vortex,coti2019degenerate}. See also Grenier et al \cite{grenier2020linear} for an approach using methods from the study of Schr\"odinger operators. 
	
       Lastly, we mention the result of Zillinger \cite{zillinger2016linear} which addresses the boundary effect in the dynamics of \eqref{maink}, and is perhaps the closest to our results below. In particular, he showed that it is in general not possible to obtain a uniformly smooth ``profile" in high Sobolev spaces over $t\in[0,\infty)$. The main assumption in \cite{zillinger2016linear} is that the background shear flow is close to the Couette flow. In our case, we need to perform detailed spectral analysis if such an assumption is dropped, using and refining methods introduced in \cite{jia2020linear}.

	\subsection{ Main results}\label{main result 1}
	Without loss of generality, we assume the mode $k \ge 1$. In fact, the equations are trivial for $k = 0$, and the case $k > 0$ and $k < 0$ are equivalent up to complex conjugation.
Our main goal is to understand the more refined asymptotics of the linearized flow \eqref{maink} as $t\to+\infty$, and in particular, to capture precisely the singularity structure of the so-called spectral density functions when the spectral parameter collapses into the continuous spectrum. As a consequence, we obtain an expansion of the stream function $\psi_k(t,y)$, $y\in [0,1], t\ge0$ to all orders, as $t\to\infty$, see Theorem \ref{interior regularity}- Theorem \ref{regularity near b(0)}. As we shall see, the boundary $y\in\{0,1\}$ plays an important role in the long time behavior of $\psi_k(t,y), y\in [0,1], t\ge0$, and it is the primary difficulty in the study of the refined dynamics of \eqref{maink}. 
	
	In order to capture the optimal dependence on the frequency $k$ in different estimates, we define for integer $n\ge 1$ the weighted Sobolev norm $\norm{h}_{H^n_k(\mathbb{R})}$ for $h\in H^n(\R)$ as 
	\begin{equation}\label{weighted norm}
		\norm{h}_{H^n_k(\mathbb{R})} := \sum_{\alpha = 0}^n |k|^{n - \alpha}\norm{\partial^\alpha_v h}_{L^2(\mathbb{R})}.
	\end{equation}
	Direct computation shows that
	\begin{equation}
		\norm{h}_{H^n_k(\mathbb{R})} \lesssim |k|^{-m}\norm{h}_{H^{n + m}_k(\mathbb{R})}
	\end{equation}
	for all $m\ge 0$.

	To state main theorems, we first define the smooth cut-off functions $$\chi^{in}(y), \chi^{b0}(y), \chi^{b1}(y) \in C^\infty_c(\R)$$ such that for $y \in \R$,
	\begin{equation}\label{in cutoff}
		\chi^{in}(y) = \begin{cases}
			&1, ~~~ \text{if}~~~ y \in [1/8, 7/8]\\
			&0, ~~~ \text{if}~~~ y \notin [1/16, 15/16],
		\end{cases}
	\end{equation}
	\begin{equation}\label{b0 cutoff}
		\chi^{b0}(y) = \begin{cases}
			&1, ~~~\text{if}~~~ y\in [-1/8, 1/8]\\
			&0, ~~~\text{if}~~~ y\notin [-1/4, 1/4],
		\end{cases}
	\end{equation}
	and
	\begin{equation}\label{b1 cutoff}
		\chi^{b1}(y) = \chi^{b0}(1-y).
	\end{equation}
	
	Denote $I:= [0, 1]$. Our first main result is the following characterization of the stream function in the interior of the channel. 
	\begin{theorem}\label{interior regularity}
		For $y\in I$, $k \ge 1$ and $N\in\Z\cap[3,\infty)$, assume that $\omega_{0}^k\in H^N(I)$. Then there exist functions $\alpha^{in}(t, \cdot, k)$, $\beta^{in}(t, \cdot, k), \gamma^{in}(t, \cdot, k)\in C([1,\infty), H^{N-3}(I))$, such that the unique solution $\psi_k\in C([0,\infty), H^{N+2}(I))$ satisfies for for $y \in I$ and $t \ge 1$,
		\begin{equation}\label{decomp of interior part}
			\psi_k(t, y)\chi^{in}(y) = \frac{e^{-ikb(y)t}}{k^2 t^2}\alpha^{in}(t, y, k) + \frac{e^{-ikb(0)t}}{k^2t^2}\beta^{in}(t, y, k)+ \frac{e^{-ikb(1)t}}{k^2t^2}\gamma^{in}(t, y, k),
		\end{equation}
		and for every integer $m\in\Z\cap[0,N-3]$ and $t\ge1$,
		\begin{equation}\label{control 116}
			\norm{\alpha^{in}(t, \cdot, k)}_{H^m_k(I)} + \norm{\beta^{in}(t, \cdot, k)}_{H^m_k(I)} +\norm{\gamma^{in}(t, \cdot, k)}_{H^m_k(I)}\lesssim_{m} \norm{\omega_0^k}_{H^{m + 3}_k(I)}.
		\end{equation}
	\end{theorem}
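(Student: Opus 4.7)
The plan is to start from a spectral representation of $\psi_k$ and then perform careful integrations by parts in the spectral parameter to extract the leading oscillatory terms. Since $L_k$ is assumed to have no embedded eigenvalues and its essential spectrum is $[b(0), b(1)]$, the Dunford--Taylor formula for the semigroup generated by $L_k$ can be deformed onto a contour hugging the real axis, giving
$$\psi_k(t,y) = \frac{1}{2\pi i}\int_{b(0)}^{b(1)} e^{-ik\lambda t}\bigl[\Psi_k^+(\lambda,y) - \Psi_k^-(\lambda,y)\bigr]\,d\lambda,$$
where $\Psi_k^\pm(\lambda,y) = \int_0^1 G_k(y,z)\bigl[(\lambda - L_k \mp i0)^{-1}\omega_0^k\bigr](z)\,dz$ are the boundary values of the resolvent applied to the initial data, then paired against $G_k$ to produce a stream function. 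The jump density $\Psi_k^+ - \Psi_k^-$ plays the role of the spectral density function referenced in the introduction.

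Next, I would identify and classify the singularities of $\Psi_k^+ - \Psi_k^-$ as a function of $\lambda \in (b(0), b(1))$. Using the representation of the resolvent through the associated Rayleigh equation together with the structure of $G_k$, the density is $C^\infty$ in $\lambda$ away from exactly three distinguished points: the interior critical layer $\lambda = b(y)$, and the two endpoints $\lambda = b(0), b(1)$. Near each such point $\lambda_\ast$ the density admits an explicit expansion of the form $\text{(smooth)} + (\lambda-\lambda_\ast)\log(\lambda-\lambda_\ast)\,A(\lambda,y) + \dots$, with coefficients controlled by $\omega_0^k$ and by the Green's function. For $y \in \supp\chi^{in}$ the three singular points are uniformly separated, which permits a clean localization around each of them.

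The third step is to integrate by parts twice in $\lambda$ against the oscillatory factor $e^{-ik\lambda t}$. Each integration gains a factor of $(-ikt)^{-1}$, and because the density fails to be $C^2$ only at the three distinguished values of $\lambda$, the boundary contributions of these integrations by parts concentrate precisely there. The contribution at $\lambda = b(y)$, after the change of variables $\lambda = b(y')$ (legitimate by the monotonicity of $b$), yields the first summand on the right of \eqref{decomp of interior part} with phase $e^{-ikb(y)t}$; the contributions at the endpoints $\lambda = b(0)$ and $\lambda = b(1)$ produce the remaining summands with phases $e^{-ikb(0)t}$ and $e^{-ikb(1)t}$. The amplitude functions $\alpha^{in}, \beta^{in}, \gamma^{in}$ are then, after multiplication by $\chi^{in}$, the explicit coefficients extracted from these boundary terms.

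Finally, the weighted estimate \eqref{control 116} reduces to bounding the coefficient functions in the singular expansions in $H^m_k(I)$ by $\|\omega_0^k\|_{H^{m+3}_k(I)}$. The loss of three derivatives is natural: two derivatives are consumed by the two integrations by parts (each $\lambda$-derivative translates into a $y'$-derivative against the inverse Jacobian $1/b'$), and one further derivative is absorbed in the structural regularity of $(\lambda - L_k)^{-1}$ and $G_k$, which involves solving a second-order ODE and dividing by $b'$. The main obstacle, and the real content of this part of the paper, is the second step: giving a sharp, recursive description of the singularity structure of $\Psi_k^+ - \Psi_k^-$ so that the two integrations by parts are justified and the remainder genuinely decays faster than the three displayed terms. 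Once this structural description is in place, the theorem follows from the integration-by-parts computation outlined above combined with standard weighted Sobolev estimates on $G_k$ and on the resolvent.
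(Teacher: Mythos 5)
Your overall strategy — write $\psi_k$ as an oscillatory integral of the spectral density $\Psi_k^+ - \Psi_k^-$ in the spectral parameter, isolate the three singular points $b(y), b(0), b(1)$, integrate by parts twice, and account for a three-derivative loss — coincides with the paper's plan, and you correctly flag that the real work is the singularity analysis of the density. But the step where you extract the phases is where the proposal, as written, breaks down.

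You assert that after two integrations by parts in $\lambda$ "the boundary contributions of these integrations by parts concentrate precisely" at $b(y), b(0), b(1)$. Take the leading interior singularity that you yourself identify, $\rho(\lambda) = \rho_{sm}(\lambda) + (\lambda-\lambda_*)\log(\lambda-\lambda_*) A(\lambda,y)$. After one integration by parts there is no interior boundary term, since $\rho$ is continuous at $\lambda_*$; but $\rho'$ has a $\log(\lambda-\lambda_*)$ singularity, so $\rho''$ behaves like $1/(\lambda-\lambda_*)$ and is not locally integrable. Splitting the $\lambda$-integral at $\lambda_*$ and reading off a "boundary term" there produces an evaluation of $\rho'$ at the singular point, which is $-\infty$. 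So the integration-by-parts-plus-boundary-term picture, taken literally, does not produce the asymptotic expansion: one cannot even define the post-IBP integral, let alone its boundary evaluations. What the paper actually does is keep the $+i\epsilon$ regularization through both integrations by parts (so $\frac{d^2}{dw^2}$ is applied to $\Theta^\iota_{k,\epsilon}(v-w,w)$, producing $\frac{A(v,w)}{v-w+i\iota\epsilon}$ and $\log^{p'}(v-w+i\iota\epsilon)$-type terms), and then apply a dedicated Fourier estimate — Lemma \ref{lemma 54}, built on the Fourier bound for $f\log^m(\cdot+i\epsilon)$ in Lemma \ref{Fourier estimate with log} — which shows that the oscillatory integral against $\frac{\log^{p'}(v-w+i\epsilon)}{v-w+i\epsilon}$ equals $e^{-ikvt}$ times an amplitude controlled in $H^s$ (with a $\log^{p'}\langle kt\rangle$ loss when $p'>0$). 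That lemma, not a boundary evaluation, is the device that turns the pointwise singularity into a phase factor and delivers the weighted Sobolev estimate \eqref{control 116}. Your sketch also omits the decomposition of the $w$-integral via the cutoffs $\Upsilon_1,\Upsilon_2,\Upsilon_3$, whose role is not mere separation of points but to ensure that in each piece only one of the three singularities is active — e.g.\ in $\phi^1_k\chi^{in}$ one has $|v-w|$ bounded below, so $\Theta^{1,\iota}(v-w,w)$ is smooth in $v$, and only the $\log(b(0)-w+i\epsilon)$ singularity contributes, cleanly isolating the $e^{-ikb(0)t}$ phase. Without these two ingredients — the $i\epsilon$-regularized Fourier lemma and the $\Upsilon_j$-localization — the proof of \eqref{decomp of interior part}–\eqref{control 116} cannot close.
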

	
	\begin{remark}\label{remarkIR}
	Theorem \ref{interior regularity} implies the existence of not one, but {\it three profiles}. The profile $\alpha^{in}$ is the ``main profile" when no boundary effect is present, see for example \cite{jia2020linear}. The profiles $\beta^{in}, \gamma^{in}$ come from the boundary contribution. They depend on boundary values of derivatives of the initial vorticity $\omega_0^k$ and background $b''$. In general,  $\beta^{in}, \gamma^{in}$ do not decay over time with a faster rate, if $\omega_{0}^k$ and $b''$ do not vanish at the boundary. If $\omega_0^k$ and $b''$ vanish to some order at the boundary, we can show that we can get more decay in time $\frac{1}{t^{2 + p}}$ for the $\beta^{in}$ and $\gamma^{in}$ components, and they also have higher regularity $H^{m+p}_k(I)$ in the estimate (\ref{control 116}). Here the positive integer $p$ is determined by to which order $\omega_0^k$ and $b''$ are vanishing. 
	\end{remark}

	Next we turn to the behavior of $\psi_k(t,y), y\in I, t\ge1$ near $y=b(0)$, the case of $y=b(1)$ being similar. 
	\begin{theorem}\label{regularity near b(0)}
		For any integers $N\ge 3$, $k \ge 1$, and $y \in [0, 1]$, assume $\omega_0^k(y) \in H^N_k(I)$. Then there exist functions $\alpha^{b0, N}(t, y, k)$, $\beta^{b0, N}(t, y, k), R^{b0, N}(t, y, k)\in C([1,\infty), L^2(I))$ such that the unique solution $\psi_k\in C([0,\infty), H^{N+2}(I))$ satisfies for for $y \in I$ and $t \ge 1$,
		\begin{equation}\label{expb1}
			\psi_k(t, y)\chi^{b0}(y) = \frac{e^{-ikb(y)t}}{k^2 t^2}\alpha^{b0, N}(t, y, k) + \frac{e^{-ikb(0)t}}{k^2t^2}\beta^{b0, N}(t, y, k) + \frac{R^{b0, N}(t, y, k)}{(kt)^{N-1}}
		\end{equation}
		with the following properties.
		\begin{enumerate}
			\item $R^{b0, N}(t, y, k)$ satisfies the bounds for $t\ge1$,
			\begin{equation}
				\norm{R^{b0, N}(t, \cdot, k)}_{H^1_k(I)} \lesssim_N (1 + \log^{2(N-3)}\la t\ra) \norm{\omega_0^k}_{H^N_k(I)}.
			\end{equation}
			
			\item There exist functions $\alpha_j(t, y, k)\in C([1,\infty), L^2(I))$ for $1 \le j \le N-2$, satisfying the bounds for $t \ge 1$ and $j\in[2,N-2]$,
			\begin{equation}\label{expb3}
				\norm{\alpha_1(t, \cdot, k)}_{H^1_k(I)} \lesssim \norm{\omega_0^k}_{H^3_k(I)}
			\end{equation}
   and 
   \begin{equation}\label{expb2}
       \norm{\alpha_j(t, \cdot, k)}_{L^2(I)} \lesssim (1 + \log^{2(j-1)}\la t\ra)\norm{\omega_0^k}_{H^{j+2}_k(I)}
   \end{equation}
                          such that for $y \in I$ and $t \ge 1$, 
			                         \begin{equation}
				\alpha^{b0, N}(t, y, k) = \sum_{j = 1} ^ {N-2} \frac{\alpha_j(t, y, k)}{(kt)^{j-1}}.
			\end{equation}\label{expb4}
      Furthermore, if $\omega_0^k(0) = 0$, we have $\alpha_1(t, y, k) \in H^2_k(I)$ and $\alpha_2(t, y, k) \in H^1_k(I)$.
	                \item There exist functions $\beta_j(t, y, k)\in C([1,\infty), L^2(I))$ for $1 \le j \le N-2$, such that for $y \in I$ and $t \ge 1$,
	                satisfying the bounds for $t \ge 1$ and $j\in[2,N-2]$,
			\begin{equation}
				\norm{\beta_1(t, \cdot, k)}_{H^1_k(I)} \lesssim \norm{\omega_0^k}_{H^3_k(I)}
			\end{equation}
   and
   \begin{equation}\label{expb5}
       \norm{\beta_j(t, \cdot, k)}_{L^2(I)} \lesssim (1 + \log^{2(j-1)}\la t\ra)\norm{\omega_0^k}_{H^{j+2}_k(I)}
   \end{equation}
			                          such that for $y \in I$ and $t \ge 1$,
			                          \begin{equation}
				\beta^{b0, N}(t, y, k) = \sum_{j = 1} ^ {N-2} \frac{\beta_j(t, y, k)}{(kt)^{j-1}}.
			\end{equation}
			
			Furthermore, if $\omega_0^k(0) = 0$, we have $\beta_1(t, y, k) \equiv 0 $ for $y\in I$, and $\beta_2(t, \cdot, k) \in H^1_k(I)$.
			
		\end{enumerate}
	\end{theorem}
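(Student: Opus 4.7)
The plan is to represent $\psi_k(t,y)$ via a spectral density integral and then extract the asymptotics through an iterated integration-by-parts scheme that respects the singularity structures of the density at $c=b(y)$ and $c=b(0)$. The starting point is the Dunford-integral representation
$$\psi_k(t,y)\;=\;\frac{1}{2\pi i}\int_{b(0)}^{b(1)}e^{-ikct}\,\Phi_k(y,c)\,dc,\qquad t\ge 0,\; y\in I,$$
where, under the no-embedded-eigenvalue hypothesis on $L_k$ from \eqref{F3.1}, $\Phi_k(y,c)$ encodes the jump of $(L_k-c)^{-1}\omega_0^k$ across the continuous spectrum, post-composed with the Dirichlet solve $(\partial_y^2-k^2)^{-1}$. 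Following \cite{jia2020linear,wei2018linear}, $\Phi_k(y,c)$ is smooth in $c$ away from three singular locations: the interior point $c=b(y)$, where the free part of the Green's function induces a logarithmic-type singularity, and the boundary values $c\in\{b(0),b(1)\}$, where the boundary part of the Green's function induces additional logarithmic singularities. The recursive singularity structure introduced earlier in the paper then produces a splitting
$$\Phi_k(y,c)\;=\;\Phi_k^{\mathrm{free}}(y,c)+\Phi_k^{b0}(y,c)+\Phi_k^{b1}(y,c),$$
with each summand carrying exactly one singularity and smooth away from it.

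The proof then reduces to three oscillatory integrals. For the $\Phi_k^{b1}$ contribution, the phases $b(y)$ and $b(1)$ stay bounded apart for $y\in\supp\chi^{b0}$, so repeated integration by parts in $c$ yields decay $(kt)^{-(N-1)}$, which is absorbed into $R^{b0,N}/(kt)^{N-1}$. For the free part, write $e^{-ikct}=e^{-ikb(y)t}\,e^{-ik(c-b(y))t}$ and observe that $\Phi_k^{\mathrm{free}}(y,c)$ depends on $c$ through a function of $c-b(y)$ with a logarithmic singularity at zero; successive integration by parts in $c$, with the log-type singularity handled through its principal-value contribution, produces at step $j\in[1,N-2]$ a contribution $\alpha_j(t,y,k)/(kt)^{j-1}$ to $\alpha^{b0,N}$, in which the $j$-th $c$-derivative of a log-type kernel accumulates a $\log^{2(j-1)}\la t\ra$ factor yielding \eqref{expb2}. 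The Sobolev bound on $\alpha_j$ reduces, via mapping properties of $G_k$ and the generalized eigenfunction equation, to $\|\omega_0^k\|_{H^{j+2}_k(I)}$. A parallel scheme for $\Phi_k^{b0}$, using $e^{-ikct}=e^{-ikb(0)t}\,e^{-ik(c-b(0))t}$, produces the $\beta_j/(kt)^{j-1}$ series with matching bounds \eqref{expb5}, and after $N-2$ steps the remaining integrals are absorbed into $R^{b0,N}$.

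The main obstacle will be that near $y=0$ the singularities at $c=b(y)$ and $c=b(0)$ coalesce, so the three pieces of the decomposition cannot be handled independently: each integration-by-parts step in either branch produces endpoint contributions at the other singular point, and the recursion must be organized so that these cross-terms feed back cleanly into the companion series. The required $H^m_k(I)$ bounds on $\alpha_j,\beta_j$ are especially delicate because $y$-derivatives of the profiles probe $\Phi_k(y,c)$ precisely where the two singular points nearly coincide; the recursive singularity structure is designed to keep all such cross-terms controlled by $\|\omega_0^k\|_{H^{j+2}_k(I)}$. Finally, the cancellations when $\omega_0^k(0)=0$ follow because the strength of the leading boundary singularity in $\Phi_k^{b0}$, and hence the contribution to $\beta_1$, is proportional to $\omega_0^k(0)$ modulo terms that already gain one derivative; thus $\omega_0^k(0)=0$ forces $\beta_1\equiv 0$ and upgrades the regularity of the low-order profiles $\alpha_1, \alpha_2, \beta_2$ by one $H^1_k$-class derivative, consistent with the last clauses of parts (2) and (3).
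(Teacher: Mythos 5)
Your high-level plan matches the paper's strategy in outline: express $\psi_k$ as a spectral integral, extract asymptotics by iterated integration by parts in the spectral variable, track the logarithmic singularity growth, and exploit the vanishing condition. But there is a genuine structural gap in the way you set up the decomposition that would derail the argument.

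You propose to split the spectral density itself, $\Phi_k = \Phi_k^{\mathrm{free}} + \Phi_k^{b0} + \Phi_k^{b1}$, with ``each summand carrying exactly one singularity and smooth away from it.'' This is not something the paper's recursive singularity structure delivers, and it is in fact the wrong object to split. The Green's function does decompose as $\G_k = \G_k^{fr} + \G_k^{b0} + \G_k^{b1}$, but the density $\Theta^\iota_{k,\eps}$ solves a fixed-point equation through $(I+T_{k,\eps})^{-1}$ involving the \emph{full} Green's function; the free and boundary parts interact at every iterate, so the density does not inherit a three-piece decomposition in which each piece sees only one singular location. What the paper actually does is leave the density intact and instead partition the \emph{integration variable} $w$ via cutoffs $\Upsilon_1,\Upsilon_2,\Upsilon_3$, and then establish, in Theorem \ref{thm 49}, that $\Theta^{1,\iota}_{k,\eps}=\Theta_{k,\eps}\Upsilon_1(w)$ belongs to the two-index singularity class $\mathcal{F}^{b0}_{m,n,k,\eps}$, meaning that every $\partial_v$ and every $\partial_w$ simultaneously introduce factors $\log(v+i\eps)$ and $\log(b(0)-w+i\eps)$. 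Your ``cross-terms fed back into the companion series'' remark is the right instinct, but without this two-variable singularity bookkeeping there is no mechanism to prove that the cross terms close up; in your proposed clean split they would not even be present, which would misstate the regularity of $\alpha_j,\beta_j$ for $j\ge 2$. Indeed the $L^2$-only bound \eqref{expb2} (as opposed to $H^1_k$) is precisely due to mixed factors $\log^{p'}(v-w+i\eps)\log^{q'}(b(0)-w+i\eps)$ with $q'>0$ multiplying the interior kernel, a phenomenon invisible in a single-singularity splitting.

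Two smaller issues. First, your claim that the $\Phi_k^{b1}$ contribution decays like $(kt)^{-(N-1)}$ ``because the phases stay bounded apart'' conflates nonresonance with smoothness of the integrand; the density still has a logarithmic singularity in $w$ near $b(1)$ which limits the number of integrations by parts. In the paper this contribution is handled by observing that, with $y$ near $0$ and $w$ near $b(1)$, both $v-w$ and $b(0)-w$ are bounded away from zero, so the profiles are smooth in $y$ and can be folded into the existing series rather than discarded as a pure remainder. Second, for the $\omega_0^k(0)=0$ improvement, the coefficient $B^{\iota}_{k,\eps}$ in \eqref{major term in w} has two pieces: one proportional to $f_0^k(b(0))$ and one proportional to $b''(0)\,\Theta^\iota_{k,\eps}(b(0)-w,w)$. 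You only account for the first; the second vanishes automatically because $\Theta^\iota_{k,\eps}(b(0)-w,w)=\psi^\iota_{k,\eps}(0,y_0)=0$ from the Dirichlet boundary condition, which is needed to conclude $\beta_1\equiv 0$ rather than merely that $\beta_1$ is proportional to $b''(0)$.
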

	
	\begin{remark}\label{remarkb0}
		We note that the expansion \eqref{expb1}-\eqref{expb5} allows us to identify the asymptotics of the stream function to all orders. The regularity bounds \eqref{expb3} and \eqref{expb5} on the coefficient functions $\alpha_j, \beta_j$ are sharp in general, unless we assume more vanishing conditions on the initial vorticity $\omega_0^k$ on the boundary. For example, one of the contributing factor to the term $\beta_2(t,y,k)$ is given by the integral for $y\in I, t\ge1$ with $v = b(y)$, 
		\begin{equation}\label{remarkb01}
		\lim_{\epsilon\to0+}\int_{\R} e^{-ik(w-b(0))t}\varphi_k(v,w)\frac{\log(v+i\epsilon)}{b(0)-w+i\epsilon}\,dw,
		\end{equation}
		where $\varphi\in C_0^\infty(\R^2)$. It follows that it is not possible to control \eqref{remarkb01} in $H^1$. Similar considerations apply for $\alpha_2$. On the other hand, if we assume that $\omega_0^k$ and $b''$ vanish to a high order, then the control we have on $\alpha_j, \beta_j$ can be improved significantly. 
		
				\end{remark}
				
The proof of theorems is based on the study of the singularity structure of the spectral density functions, defined in section \ref{secSDF}. The stream function can be written as an oscillatory integral of the spectral density function, using standard spectral analysis. To obtain the precise singularity structure of the spectral density functions, we make a change of coordinate and extend the problem to the whole space, to simplify the specific form of the singularities both in the interior and near the boundary. The desired characterization of the singularity structures, see section \ref{secSSSDF}, follows from the limiting absorption principle and a refined analysis of the singularities of Rayleigh equations. 
 \section{Preliminary}
 \subsection{Spectral density function}\label{secSDF}
With the linearized operator $L_k$ defined in \eqref{F3.1},  the equation (\ref{maink}) can be formulated as
	\begin{equation}\label{F3.2}
		\partial_t\omega_k+ikL_k\omega_k = 0, \quad \omega_k(0, y) = \omega_0^k(y).
	\end{equation}
 By standard theory of spectral projection, we then have
	\begin{equation}\label{F4}
		\begin{split}
			\omega_k(t,y)&=\frac{1}{2\pi i}\lim_{\epsilon\to0+}\int_{\R}e^{i\lambda t}\left[(\lambda+kL_k-i\epsilon)^{-1}-(\lambda+kL_k+i\epsilon)^{-1}\right]\omega_0^k\,d\lambda\\
			&=\frac{1}{2\pi i}\lim_{\epsilon\to0+}\int_{0}^1e^{-ikb(y_0) t}|b'(y_0)|\times \bigg[(-b(y_0)+L_k-i\epsilon)^{-1} \\
   &\qquad \qquad \qquad -(-b(y_0)+L_k+i\epsilon)^{-1}\bigg]\omega_0^k\,dy_0.
		\end{split}
	\end{equation}
	We then obtain
	\begin{equation}\label{F5}
		\begin{split}
			\psi_k(t,y)&=-\frac{1}{2\pi i}\lim_{\epsilon\to0+}\int_{0}^1e^{-ikb(y_0) t}|b'(y_0)|\int_0^1G_k(y,z)\\
			&\hspace{1in}\times\bigg\{\Big[(-b(y_0)+L_k-i\epsilon)^{-1}-(-b(y_0)+L_k+i\epsilon)^{-1}\Big]\omega_0^k\bigg\}(z)\,dz dy_0\\
			&=-\frac{1}{2\pi i}\lim_{\epsilon\to0+}\int_{0}^1e^{-ikb(y_0) t}|b'(y_0)|\left[\psi_{k,\epsilon}^{-}(y,y_0)-\psi_{k,\epsilon}^{+}(y,y_0)\right]dy_0.
		\end{split}
	\end{equation}
	In the above, 
	\begin{equation}\label{F6}
		\begin{split}
			&\psi_{k,\epsilon}^{+}(y,y_0):=\int_0^1G_k(y,z)\Big[(-b(y_0)+L_k+i\epsilon)^{-1}\omega_0^k\Big](z)\,dz,\\
			&\psi_{k,\epsilon}^{-}(y,y_0):=\int_0^1G_k(y,z)\Big[(-b(y_0)+L_k-i\epsilon)^{-1}\omega_0^k\Big](z)\,dz.
		\end{split}
	\end{equation}
	We note that $\psi_{k,\epsilon}^{+}(y,y_0), \psi_{k,\epsilon}^{-}(y,y_0)$ satisfy for $\iota\in\{+,-\}$
	\begin{equation}\label{F7}
		\big(-k^2 + \frac{d^2}{dy^2}\big)\psi_{k,\epsilon}^{\iota}(y,y_0)-\frac{b''(y)}{b(y)-b(y_0)+i\iota\epsilon}\psi_{k,\epsilon}^{\iota}(y,y_0)=\frac{-\omega_0^k(y)}{b(y)-b(y_0)+i\iota\epsilon}
	\end{equation}
	which can be reformulated as
	\begin{equation} \label{F8}
        \begin{split}
            &\psi^\iota_{k,\epsilon}(y, y_0) + \int_0^1G_k(y,z)\frac{b''(z)\psi^\iota_{k, \epsilon}(z, y_0)}{b(z) - b(y_0) + i\iota\eps}dz \\
            = &\int_0^1G_k(y,z)\frac{\omega_0^k(z)}{b(z) - b(y_0) + i\iota\eps}dz.
        \end{split}
	\end{equation}

We summarize the above computation as the following proposition.
\begin{proposition}
    For $k \ge 1$ and $(t, y) \in [0, \infty) \times [0, 1]$, assume $\omega_k(t,y)$ and $\psi_k(t,y)$ solve the equation \eqref{maink} with the initial data $\omega_0^k(y)$. For $y, y_0 \in [0, 1]$, $\eps \in (0, 1)$ and $\iota \in \{+, -\}$, let $\psi_{k,\eps}^\iota(y, y_0)$ be the solution to 
    \begin{equation}
        \begin{split}
            &\psi^\iota_{k,\epsilon}(y, y_0) + \int_0^1G_k(y,z)\frac{b''(z)\psi^\iota_{k, \epsilon}(z, y_0)}{b(z) - b(y_0) + i\iota\eps}dz \\
            = &\int_0^1G_k(y,z)\frac{\omega_0^k(z)}{b(z) - b(y_0) + i\iota\eps}dz.
        \end{split}
	\end{equation}
 Then we have for $(t, y) \in [0, \infty) \times [0, 1]$
 \begin{equation}
     \psi_k(t, y) = -\frac{1}{2\pi i}\lim_{\epsilon\to0+}\int_{0}^1e^{-ikb(y_0) t}|b'(y_0)|\left[\psi_{k,\epsilon}^{-}(y,y_0)-\psi_{k,\epsilon}^{+}(y,y_0)\right]dy_0.
 \end{equation}
\end{proposition}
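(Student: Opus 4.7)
The plan is to carry out rigorously the spectral-theoretic reduction that is sketched informally in equations \eqref{F3.2}--\eqref{F8}, and verify each step.

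First, I would recast \eqref{maink} as the abstract ODE $\partial_t \omega_k + ik L_k \omega_k = 0$ on $L^2(0,1)$, with $L_k$ from \eqref{F3.1}. Since $L_k$ equals the bounded self-adjoint multiplication operator $b(y)\cdot$ plus a compact integral operator $b''(y)\int_0^1 G_k(y,z)\cdot\,dz$, its essential spectrum coincides with the range of $b$. The standing assumption that $L_k$ has no embedded eigenvalues guarantees, via the limiting absorption principle, that the boundary values $(\lambda + kL_k \mp i\epsilon)^{-1}$ of the resolvent exist as $\epsilon \to 0^+$ in a suitable weighted topology. Stone's formula then yields the representation
\begin{equation*}
\omega_k(t,\cdot) = e^{-iktL_k}\omega_0^k = \frac{1}{2\pi i}\lim_{\epsilon\to0^+}\int_{\R} e^{i\lambda t}\Bigl[(\lambda+kL_k - i\epsilon)^{-1}-(\lambda+kL_k+i\epsilon)^{-1}\Bigr]\omega_0^k\, d\lambda,
\end{equation*}
as displayed in \eqref{F4}. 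Applying the change of variables $\lambda = -kb(y_0)$, whose Jacobian produces $|b'(y_0)|$ and whose exponential becomes $e^{-ikb(y_0)t}$, converts this into an integral over $y_0\in[0,1]$; here I rely on the monotonicity (or piecewise monotonicity, after a standard partition) of $b$.

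Second, to pass from $\omega_k$ to $\psi_k$, I apply $(\partial_y^2 - k^2)^{-1}$ with Dirichlet boundary conditions, i.e.\ convolution against $-G_k(y,z)$, to each side of the Stone formula. This immediately produces \eqref{F5} with $\psi^\pm_{k,\epsilon}(y,y_0)$ defined exactly as in \eqref{F6}. The exchange of the limit $\epsilon\to0^+$ with the Green's function convolution is justified because $G_k$ is a bounded convolution kernel on $L^2(0,1)$ uniformly in $k\ge1$.

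Finally, to derive the integral equation \eqref{F8} characterizing $\psi^\iota_{k,\epsilon}$, I start from $(-b(y_0)+L_k+i\iota\epsilon)\phi^\iota_{k,\epsilon} = \omega_0^k$, with $\phi^\iota_{k,\epsilon} := (-b(y_0)+L_k+i\iota\epsilon)^{-1}\omega_0^k$. Expanding $L_k\phi^\iota_{k,\epsilon}$ via \eqref{F3.1} and rearranging gives the Rayleigh-type equation \eqref{F7} satisfied pointwise in $y$; applying $-G_k(y,\cdot)$ and using $(-k^2+\partial_y^2)\psi^\iota_{k,\epsilon} = \phi^\iota_{k,\epsilon}$ (up to the $b''$ term) yields \eqref{F8}. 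The main conceptual obstacle is justifying the $\epsilon\to0^+$ limits in the spectral representation, which relies crucially on the no-embedded-eigenvalue hypothesis to guarantee that the operator $I + (\text{integral kernel involving }(b(z)-b(y_0)+i\iota\epsilon)^{-1})$ appearing in \eqref{F8} is uniformly invertible as $\epsilon\to0^+$; this is a standard consequence of the limiting absorption principle established in \cite{jia2020linear} and will underpin the refined spectral-density analysis carried out in section \ref{secSSSDF}.
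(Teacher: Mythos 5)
Your proposal follows the same reduction the paper sketches in \eqref{F3.2}--\eqref{F8}: Stone's formula for $e^{-iktL_k}$, the change of variables $\lambda=-kb(y_0)$ (with Jacobian $k|b'(y_0)|$ cancelling the $1/k$ from rescaling the resolvent), application of the Green's function $-G_k$ to pass from $\omega_k$ to $\psi_k$, and the derivation of the Rayleigh-type integral equation for $\psi^\iota_{k,\eps}$. The extra justifications you supply (Weyl's theorem for the essential spectrum, Fredholm solvability for fixed $\eps>0$, and the role of the limiting absorption principle in the $\eps\to0^+$ limit) are correct and simply make explicit what the paper leaves implicit.
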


	\subsection{Extension of the Green's function}
	For integers $k \ge 1$, recall that the Green's function $G_k(y,z)$ solves for $y, z \in [0, 1]$
	\begin{equation}\label{eq:Helmoltz}
		-\frac{\partial^2}{\partial y^2}G_k(y,z)+k^2G_k(y,z)=\delta(y-z),
	\end{equation}
	with Dirichlet boundary conditions $G_k(0,z)=G_k(1,z)=0$. $G_k$ has the explicit formula 
	\begin{equation}\label{eq:GreenFunction}
		G_k(y,z)=\frac{1}{k\sinh k}
		\begin{cases}
			\sinh(k(1-z))\sinh (ky)\qquad&\text{ if }y\leq z,\\
			\sinh (kz)\sinh(k(1-y))\qquad&\text{ if }y\geq z.
		\end{cases}
	\end{equation}

 We first make zero extension of $G_k(y,z)$ in the $z$ direction as follows. For $y \in [0, 1], z\in \R$, we set
	\begin{equation} \label{extension1}
		G_k(y,z) := 
		\begin{cases}
			G_k(y, z)\qquad&\text{ if }z\in [0,1], \\
			0\qquad&\text{ if }z\not\in [0, 1].
		\end{cases}
	\end{equation}
	After this extension, $G_k(y,z)$ solves 
	\begin{equation}\label{eq:extend1}
		(-\frac{d^2}{dz^2}+k^2)G_k(y,z) = \delta(z-y) -\frac{\sinh(k(1-y))}{\sinh k}\delta(z) - \frac{\sinh (ky)}{\sinh k}\delta(z-1)
	\end{equation}
	for $y\in [0, 1], z\in \R$. In order to obtain optimal control of Sobolev norms, we choose a smooth cutoff function $\Psi_k(y)$ satisfying the following assumptions:
	\begin{enumerate}
		\item For some small $\delta_0 > 0$, $\Psi_k \equiv 1 ~\text{on}~ [0, 1], \supp \Psi_k \Subset (-\delta_0/k, 1+\delta_0/k)$ and $ 0 \le \Psi_k \le 1$.
		\item $\Psi_k'(y) > 0$ for $y \in (-\delta_0/k, 0)$, and $\Psi_k'(y) < 0$ for $y \in (1, 1+\delta_0/k)$.
	\end{enumerate}
	These assumptions are helpful for establishing the limiting absorption principle. We can then extend $G_k(y,z)$ to $(y,z) \in \R\times\R$ by solving 
	\begin{equation}\label{eq:extend2}
		(-\frac{d^2}{dz^2}+k^2)G_k(y,z) = \Psi_k(y)\bigg[\delta(z-y) -\frac{\sinh(k(1-y))}{\sinh k}\delta(z) - \frac{\sinh (ky)}{\sinh k}\delta(z-1)\bigg].
	\end{equation}

	\begin{lemma}
		The extended Green function (also denoted as $G_k(y, z)$) has the explicit expression: for $y, z \in \R$,
		\begin{equation}\label{extendexpression}
			G_k(y,z) = \Psi_k(y)\bigg[\frac{1}{k}e^{-k|z-y|} - \frac{\sinh (k(1-y))}{k\sinh k}e^{-k|z|} - \frac{\sinh (ky)}{k\sinh k}e^{-k|z-1|}\bigg].
		\end{equation}
	\end{lemma}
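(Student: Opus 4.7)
The proof is by direct verification of the closed-form expression (\ref{extendexpression}). Write the bracketed factor as $H_k(y,z)$. Since $\Psi_k(y)$ depends only on $y$, the operator $-\partial_z^2+k^2$ commutes through the prefactor, so it suffices to establish two claims: (i) in the distributional sense in $z$, $(-\partial_z^2+k^2)H_k(y,z)$ reproduces the three delta sources on the right-hand side of (\ref{eq:extend2}) (up to multiplication by $\Psi_k(y)$); and (ii) for $y\in[0,1]$, $H_k(y,\cdot)$ coincides with the zero extension (\ref{extension1}) of the original Dirichlet Green's function.

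Claim (i) follows from the distributional identity
\[
(-\partial_z^2+k^2)\,e^{-k|z-a|}=2k\,\delta(z-a),\qquad a\in\R,
\]
obtained from two-fold differentiation of $e^{-k|z-a|}$ together with $\partial_z\sign(z-a)=2\delta(z-a)$. Applying this to each of the three summands of $H_k(y,z)$ centered at $a\in\{y,\,0,\,1\}$ produces the three delta sources required by (\ref{eq:extend2}), with coefficients fixed by the prefactors in (\ref{extendexpression}).

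For claim (ii), I expand each $\sinh(k\alpha)=(e^{k\alpha}-e^{-k\alpha})/2$ inside the coefficients of $H_k$ and collect the resulting exponentials. Two algebraic identities emerge. First, for $y\in[0,1]$ and $z\notin[0,1]$, the six exponential terms telescopically cancel so that $H_k(y,z)\equiv 0$; this is a manifestation of the fact that the three free-space fundamental solutions in $H_k$ are combined with precisely the coefficients that force the sum to vanish outside $[0,1]$ in $z$. Second, for $y\in[0,1]$ with $0\le z\le y$ (resp.\ $y\le z\le 1$), the exponentials combine to $\sinh(kz)\sinh(k(1-y))/(k\sinh k)$ (resp.\ $\sinh(ky)\sinh(k(1-z))/(k\sinh k)$), matching (\ref{eq:GreenFunction}) exactly. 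Because $\Psi_k\equiv 1$ on $[0,1]$, these identities show that (\ref{extendexpression}) agrees with (\ref{extension1}) on $[0,1]\times\R$, while for $y$ in the buffer region $(-\delta_0/k,0)\cup(1,1+\delta_0/k)$ the cutoff $\Psi_k(y)$ smoothly interpolates the formula to produce compact support in $y$.

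The only content requiring care is the algebraic compact-support identity for $H_k$ with $y\in[0,1]$; the distributional computation in claim (i) is essentially a single-line exercise once the fundamental-solution identity is recorded. I do not anticipate any conceptual obstacle beyond careful bookkeeping with the six hyperbolic exponentials.
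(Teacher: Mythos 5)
Your approach is essentially the paper's --- direct verification --- but you add useful detail: the paper only remarks that one can check agreement with (\ref{eq:GreenFunction}) on $[0,1]\times[0,1]$, whereas you also record the distributional check (Claim (i)) against (\ref{eq:extend2}) and the off-support cancellation $H_k(y,z)\equiv 0$ for $y\in[0,1]$, $z\notin[0,1]$. These additions are the right way to make the verification genuinely complete, since agreement on $[0,1]^2$ alone does not certify the extension.

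There is, however, an arithmetic gap that your own work should have caught. You correctly record $(-\partial_z^2+k^2)\,e^{-k|z-a|}=2k\,\delta(z-a)$, but then assert that "the coefficients fixed by the prefactors in (\ref{extendexpression})" reproduce the delta sources in (\ref{eq:extend2}). They do not: applying the identity to the first summand $\frac{1}{k}e^{-k|z-y|}$ gives $2\delta(z-y)$, not $\delta(z-y)$, and the same factor of two appears in the other two terms, so $(-\partial_z^2+k^2)H_k = 2\big[\delta(z-y)-\tfrac{\sinh(k(1-y))}{\sinh k}\delta(z)-\tfrac{\sinh(ky)}{\sinh k}\delta(z-1)\big]$. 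Similarly, if one actually collects the six exponentials in Claim (ii) for $0\le y\le z\le 1$, they combine to $\tfrac{2}{k\sinh k}\sinh(k(1-z))\sinh(ky)$, i.e.\ $2G_k(y,z)$, not $G_k(y,z)$; a numerical spot-check at $k=1$, $y=z=\tfrac12$ confirms this. The prefactors in (\ref{extendexpression}) should be $\tfrac{1}{2k}$ and $\tfrac{1}{2k\sinh k}$, since the free Green's function of $-\partial_z^2+k^2$ on $\mathbb R$ is $\tfrac{1}{2k}e^{-k|z-a|}$. This factor of two is almost certainly a typo in the paper (it is harmless for every subsequent estimate, which is constant-insensitive), but a careful execution of either Claim (i) or Claim (ii) exposes it, and you should not have asserted that the algebra closes without performing it.
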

	The proof of this lemma is straightforward. One can directly verify that expression (\ref{extendexpression}) agrees with (\ref{eq:GreenFunction}) for $(y,z) \in [0,1]\times[0,1]$.

	We extend the background flow $b(y)$ in a way such that $b(y)$ is smooth, $b'(y) > c $ for $y \in \R$ and some constant $c > 0$. In addition, $b''(y) = 0$ for $y \notin [-2, 2]$. We extend the initial data $\omega_0^k(y)$ to be a smooth function defined for all $y \in \mathbb{R}$ and supported on the interval $[-2, 2]$. We can then extend the spectral density function $\psi^\iota_{k, \eps}(y, y_0)$ to $y \in \R$ by solving for any $y_0 \in [0, 1]$
	\begin{equation} \label{extenddensity}
		\psi_{k,\epsilon}^\iota(y,y_0) + \int_\R G_k(y,z) \frac{b''(z)\psi_{k,\epsilon}^\iota(z,y_0)}{b(z)-b(y_0)+i\epsilon}dz = \int_\R G_k(y,z)\frac{\omega^k_0(z)}{b(z) -b(y_0) +i\epsilon}dz.
	\end{equation}
	Note that for $y \in [0, 1]$, the extended Green function $G_k(y, z)$, see (\ref{extendexpression}), does not vanish only when $z \in [0, 1]$. Therefore, the solution to (\ref{extenddensity}) agrees with the original spectral density function when $y\in[0, 1]$ and $y_0 \in [0, 1]$. 
	
	\subsection{Change of variables} For $y, y_0, z \in \mathbb{R}$, define the following new variables $v := b(y), v' := b(z), w := b(y_0)$. Define the following functions with new variables: for $y, y_0 \in \R$,
	\begin{equation} \label{change of variables}
 \begin{split}
     &f_0^k(v) := \omega_0^k(y), \quad\G_k(v,v') := G_k(y,z), \\
     &\phi_{k,\epsilon}^\iota(v,w) := \psi_{k,\epsilon}^\iota(y,y_0), \quad B(v) := b'(y).
 \end{split}
	\end{equation}
	Let $\Theta^\iota_{k,\epsilon}(v,w) = \phi_{k,\epsilon}^\iota(v+w,w)$. Then for $v, w \in \R$, $\Theta^\iota_{k,\epsilon}(v, w)$ solves the following equation: for $v, w \in \R$,
	\begin{equation}\label{mainequation}
		\begin{split}
			&\Theta^\iota_{k,\epsilon}(v,w) + \int_\R\G_k(v+w, v'+w)\frac{\partial_{v'}B(v'+w)\Theta^\iota_{k,\epsilon}(v',w)}{v'+i\iota\epsilon}dv' \\
			= &\int_\R\G_k(v+w, v'+w)\frac{1}{B(v'+w)}\frac{f^k_0(v'+w)}{v'+i\iota\epsilon}dv'.
		\end{split}
	\end{equation}
 \subsection{Weighted Sobolev norms} 
 For the weighted Sobolev norm $H^n_k(\R)$ defined in (\ref{weighted norm}), we have the following lemma.
 \begin{lemma}\label{lemma 22}
     For any positive integer $k$ we define the weighted Sobolev space $H_k^n(\mathbb{R})$ as in \eqref{weighted norm}. Let $r$ be a real number such that $0 \le r \le 1$. Let $H^r(\mathbb{R})$ be the usual Sobolev space. Then for any $u \in H^1_k(\mathbb{R})$, we have
     \begin{equation}
         \norm{u}_{H^r(\mathbb{R})} \lesssim_{r, n} |k|^{r-1}\norm{u}_{H^1_k(\mathbb{R})}.
     \end{equation}
     In addition, for any $2 \le p < \infty$, we have
     \begin{equation}
         \norm{u}_{L^p{\mathbb(\mathbb{R})}} \lesssim_{p} k^{-(\frac{1}{2} + \frac{1}{p})}\norm{u}_{H^1_k(\mathbb{R})}
     \end{equation}
 \end{lemma}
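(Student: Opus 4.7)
The plan is to reduce both inequalities to the classical (unweighted) Sobolev embeddings on $\R$ by a rescaling argument, since the weighted norm $\|\cdot\|_{H^n_k}$ is precisely designed to treat the parameter $k$ as a frequency scale. Given $u \in H^1_k(\R)$, I would set $v(y) := u(y/k)$, so that $u(y) = v(ky)$. A direct change of variable gives $\|u\|_{L^2} = k^{-1/2}\|v\|_{L^2}$ and $\|\partial_y u\|_{L^2} = k^{1/2}\|\partial_y v\|_{L^2}$. Combining these with the definition \eqref{weighted norm} yields
\begin{equation*}
\|u\|_{H^1_k(\R)} = k\,\|u\|_{L^2} + \|\partial_y u\|_{L^2} = k^{1/2}\bigl(\|v\|_{L^2}+\|\partial_y v\|_{L^2}\bigr) = k^{1/2}\|v\|_{H^1(\R)}.
\end{equation*}

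For the $L^p$ inequality, I would then invoke the standard one-dimensional Sobolev embedding $H^1(\R)\hookrightarrow L^p(\R)$ (valid for all $2\le p<\infty$, and in fact for $p=\infty$) to get $\|v\|_{L^p(\R)}\lesssim_p\|v\|_{H^1(\R)}$. Since $\|u\|_{L^p(\R)} = k^{-1/p}\|v\|_{L^p(\R)}$, stringing the identities together gives
\begin{equation*}
\|u\|_{L^p(\R)} \;=\; k^{-1/p}\|v\|_{L^p(\R)} \;\lesssim_p\; k^{-1/p}\|v\|_{H^1(\R)} \;=\; k^{-1/p-1/2}\|u\|_{H^1_k(\R)},
\end{equation*}
which is the second claimed bound.

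For the $H^r$ inequality with $0\le r\le 1$, I would work on the Fourier side. Writing $\hat u(\xi) = k^{-1}\hat v(\xi/k)$, one computes
\begin{equation*}
\|u\|_{H^r(\R)}^2 = \int_{\R}(1+|\xi|^2)^r|\hat u(\xi)|^2\,d\xi = k^{-1}\int_{\R}(1+k^2|\eta|^2)^r|\hat v(\eta)|^2\,d\eta.
\end{equation*}
Using $(1+k^2|\eta|^2)^r \le k^{2r}(1+|\eta|^2)^r$ for $k\ge 1$ and $r\in[0,1]$, I obtain $\|u\|_{H^r(\R)}\lesssim_r k^{r-1/2}\|v\|_{H^r(\R)}\lesssim_r k^{r-1/2}\|v\|_{H^1(\R)}$, and plugging in $\|v\|_{H^1} = k^{-1/2}\|u\|_{H^1_k}$ produces the desired $k^{r-1}\|u\|_{H^1_k(\R)}$ bound. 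Alternatively, one can avoid fractional spaces altogether by complex interpolation between the endpoints $r=0$ (where $\|u\|_{L^2}\le k^{-1}\|u\|_{H^1_k}$ is immediate) and $r=1$ (where $\|u\|_{H^1}\le\|u\|_{H^1_k}$ since $k\ge 1$), giving the interpolated estimate $\|u\|_{H^r}\lesssim\|u\|_{L^2}^{1-r}\|u\|_{H^1}^{r}\lesssim k^{r-1}\|u\|_{H^1_k}$.

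There is no real obstacle here; the only point requiring minor care is ensuring the constants in the Sobolev embedding $H^1(\R)\hookrightarrow L^p(\R)$ and in the interpolation inequality are independent of $k$, which is automatic because the embeddings are applied to $v$ after the scaling has absorbed all the $k$-dependence.
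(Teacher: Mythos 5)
Your proof is correct. The paper disposes of this lemma in one line by citing the Gagliardo--Nirenberg interpolation inequality together with the definition of the weighted norm: concretely, the one-dimensional GN estimate $\|u\|_{L^p}\lesssim_p\|u\|_{L^2}^{1/2+1/p}\|\partial u\|_{L^2}^{1/2-1/p}$ combined with the trivial bounds $\|u\|_{L^2}\le k^{-1}\|u\|_{H^1_k}$ and $\|\partial u\|_{L^2}\le\|u\|_{H^1_k}$ already gives the $L^p$ estimate, and the $H^r$ bound follows in the same spirit by interpolating between $r=0$ and $r=1$ (which is exactly your ``alternative'' argument at the end). You instead make the scaling structure explicit via the substitution $v(y)=u(y/k)$, reducing both claims to the unscaled Sobolev/embedding facts applied to $v$ and then undoing the rescaling. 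The two routes are equivalent in substance -- the rescaling identity $\|u\|_{H^1_k}=k^{1/2}\|v\|_{H^1}$ is precisely the content that makes the GN interpolation produce the right powers of $k$ -- so what your approach buys is transparency: one sees at a glance that the $k$-weights in $H^1_k$ are designed so that $u$ at scale $1/k$ behaves like an unweighted $H^1$ function, and all $k$-dependence is absorbed before any embedding theorem is invoked. Both arguments are complete and correct; yours is slightly longer but more self-contained, and it correctly checks the boundedness of constants.
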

 \begin{proof}
     The proof follows directly from the Gagliardo-Nirenberg interpolation inequality and the definition of the $H^1_k(\mathbb{R})$ in (\ref{weighted norm}).
 \end{proof}

\section{The limiting absorption principle}
Define the following operator for $v, w \in \R, \eps \in [-\frac{1}{4}, \frac{1}{4}]\setminus \{0\}$ and $h \in H^{1}_k(\mathbb{R})$
\begin{equation}\label{Operator T}
	T_{k, \eps}h(v, w) := \int_\R \G_k(v+w, v'+w)\partial_{v'}B(v'+w)\frac{h(v')}{v'+i\eps}dv'.
\end{equation}
We have the following lemma.
\begin{lemma} \label{lemma 31}
Assume $k$ is a positive integer and $\eps \in [-\frac{1}{4}, \frac{1}{4}]\setminus \{0\}$. For any $h \in H^{1}_k(\mathbb{R})$ and $w \in \R$, we have
	\begin{equation} \label{ineq 33}
		\|T_{k, \eps}h(\cdot, w)\|_{H^1_k(\mathbb{R})} \lesssim \frac{k^{-\frac{1}{4}}}{1 + |w|}\|h\|_{H^1_k(\mathbb{R})}.
	\end{equation}
	In addition, we have for $w \in \R$,
	\begin{equation} \label{ineq 3.4}
		\norm{\partial_v T_{k, \eps}h(v, w)+ 2\Psi_k(v+w)\frac{\partial_{v}B(v+w)}{B(v+w)}h(v)\log(v+i\eps)}_{W^{1, 1}(v\in\mathbb{R})} \lesssim_k \|h\|_{H^1_k(\mathbb{R})}.
	\end{equation}
\end{lemma}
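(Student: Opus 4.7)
The plan is to exploit the explicit representation of the extended Green function from the previous lemma. Writing $y=b^{-1}(v+w)$ and $z=b^{-1}(v'+w)$, the integrand of $T_{k,\eps}h$ decomposes as a smooth prefactor times one of the three exponentials $e^{-k|z-y|}$, $e^{-k|z|}$, $e^{-k|z-1|}$. The crucial structural observations are: (a) the support of $\G_k\,\partial_{v'}B$ in $(v,v')$ is bounded and lies at distance $\gtrsim |w|-O(1)$ from the origin (using the compact supports of $\Psi_k$ and of $b''$), and (b) the only non-smooth feature in $(v,v')$ is the kink of $|z-y|$ at $v'=v$, whose $v$-derivative produces the jump $\sign(z-y)\,e^{-k|z-y|}$ which is the sole source of a $\log(v+i\eps)$ singularity when paired with the Cauchy kernel $1/(v'+i\eps)$.

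For \eqref{ineq 33}, I would use observation (a) so that for $|w|\gtrsim 1$ the pole $v'=0$ lies at distance $\gtrsim |w|$ from $\operatorname{supp}(\G_k\,\partial_{v'}B)$, giving $|v'+i\eps|\gtrsim |w|$ on the support and the factor $1/(1+|w|)$ directly; for $|w|\lesssim 1$ the factor is $O(1)$ and the pole is processed via integration by parts $1/(v'+i\eps)=\partial_{v'}\log(v'+i\eps)$, converting the Cauchy kernel to $\log(v'+i\eps)\in L^2_{\mathrm{loc}}$ uniformly in $\eps$. The $L^2_v$ bound on $T_{k,\eps}h$ follows by Schur's test using $\|\G_k\|_{L^\infty_v L^1_{v'}}+\|\G_k\|_{L^\infty_{v'}L^1_v}\lesssim k^{-2}$ together with the embedding bounds of Lemma \ref{lemma 22} to convert $H^1_k$ regularity of $h$ into $L^p$ decay in $k$. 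For $\partial_v T_{k,\eps}h$ the sign-jump part of $\partial_v\G_k$ produces a term of the form $h(v)\log(v+i\eps)$ times smooth factors, controlled in $L^2_v$ by $\|h\|_{L^\infty}\|\log(v+i\eps)\|_{L^2(\mathrm{bdd})}\lesssim k^{-1/2}\|h\|_{H^1_k}$, while the smooth part of $\partial_v\G_k$ is handled by Schur with $\|\partial_v\G_k\|_{L^\infty_v L^1_{v'}}\lesssim k^{-1}$. Combining via $\|\cdot\|_{H^1_k}=k\|\cdot\|_{L^2}+\|\partial_v\cdot\|_{L^2}$ and choosing the Lebesgue exponent in Lemma \ref{lemma 22} to balance the derivative loss against the Sobolev gain produces the $k^{-1/4}$ factor.

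For \eqref{ineq 3.4}, I would split $\partial_v\G_k=K_{\mathrm{sing}}+K_{\mathrm{sm}}$, where $K_{\mathrm{sing}}(v,v',w):=\frac{\Psi_k(y)}{B(v+w)}\sign(z-y)\,e^{-k|z-y|}$ carries the jump at $v'=v$ and $K_{\mathrm{sm}}$ is smooth in $(v,v')$ on the support (derivatives hit $\Psi_k$, $1/B$, or the boundary amplitudes $\sinh(k(1-y))/\sinh k$ and $\sinh(ky)/\sinh k$; crucially, the kinks of $|z|$ and $|z-1|$ depend on $v'$ alone and are untouched by $\partial_v$). The $K_{\mathrm{sm}}$ contribution is smooth in $(v,v')$ and is handled by integration by parts in $v'$ against $\log(v'+i\eps)$, producing a $W^{1,1}_v$ term with $k$-dependent constant. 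For the $K_{\mathrm{sing}}$ contribution, I Taylor expand $\partial_{v'}B(v'+w)\,h(v')=\partial_v B(v+w)\,h(v)+(v'-v)\,r(v,v',w)$; the remainder $r$ is $L^2_{v'}$-bounded using $h\in H^1_k$ and picks up an extra $|v'-v|$ which cancels the pole, yielding another $W^{1,1}_v$ piece. The leading part reduces to
\[
\frac{\Psi_k(y)\,\partial_v B(v+w)\,h(v)}{B(v+w)}\int_{\R}\sign(v'-v)\,e^{-k|b^{-1}(v'+w)-b^{-1}(v+w)|}\,\frac{dv'}{v'+i\eps}.
\]
A direct computation of this inner integral (substitute $u=v'-v$, split at $u=0$, apply $\int_a^b dv'/(v'+i\eps)=\log(b+i\eps)-\log(a+i\eps)$, and integrate by parts once against the exponential) yields $-2\log(v+i\eps)+\mathcal{R}(v,w,\eps)$ with $\mathcal{R}$ bounded in $W^{1,1}_v$. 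Substituting back gives exactly the counterterm $-2\Psi_k(v+w)\tfrac{\partial_v B(v+w)}{B(v+w)}h(v)\log(v+i\eps)$ that appears on the left-hand side of \eqref{ineq 3.4}.

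The main obstacle is verifying that every remainder contribution is genuinely $W^{1,1}_v$, not merely $L^1_v$: one more $v$-derivative turns $\log(v+i\eps)$ into $1/(v+i\eps)$, which is only borderline integrable, so each Taylor-remainder gain of one power of $|v'-v|$ must be spent to beat the singularity back one notch. A related technical point is that the boundary amplitudes $\sinh(k(1-y))/(k\sinh k)$ are not uniformly small in $k$ near $y\in\{0,1\}$, so one needs pointwise $v$-derivative estimates on these terms rather than relying on $L^\infty$ size bounds alone.
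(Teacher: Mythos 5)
Your proof of \eqref{ineq 33} follows essentially the same outline as the paper's: integrate by parts to convert $1/(v'+i\eps)$ into $\log(v'+i\eps)$, decompose the Green's function into free and boundary pieces, treat $|w|$ large and small separately, and use Lemma \ref{lemma 22} to convert $H^1_k$ regularity of $h$ into negative powers of $k$. Two details deserve care, though. First, Schur's test with $\|\G_k\|_{L^\infty_v L^1_{v'}}\lesssim k^{-2}$ is not literally applicable once the $\log(v'+i\eps)$ is in the kernel, since $\log$ is $L^2_{\mathrm{loc}}$ but not bounded uniformly; the paper's Cauchy--Schwarz argument is the correct tool here and yields the $k^{-1/4}$ bound with the small logarithmic loss built in. Second, your $L^\infty$ bound $\|h\|_{L^\infty}\lesssim k^{-1/2}\|h\|_{H^1_k}$ on the singular term $h(v)\log(v+i\eps)$ is correct and suffices, though it is weaker than the paper's $L^4$-H\"older estimate giving $k^{-3/4}$; the difference does not affect the final exponent.

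Your treatment of \eqref{ineq 3.4} is a genuinely different route. The paper first integrates by parts in $v'$ (Cauchy kernel $\to$ $\log$) and then differentiates in $v$, extracting the counterterm from the delta $\partial_v\sign(v-v')$. You instead differentiate in $v$ first, keep the Cauchy kernel, Taylor-expand the density at $v'=v$, and compute the leading oscillatory integral directly; this computation, including the $-2\log(v+i\eps)$ from the two boundary terms, is correct and cleanly exhibits why the counterterm has the exact coefficient $2\Psi_k\partial_v B/B\cdot h(v)$. What each approach buys: the paper's order of operations keeps the singular structure explicit ($\log(v'+i\eps)\in L^2_{\mathrm{loc}}$ uniformly in $\eps$) at every stage, so remainders are controlled by a single Cauchy--Schwarz; your order of operations makes the leading coefficient transparent but leaves a remainder integral in which the Cauchy pole is still unresolved.

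The one genuine weak point is your justification of the Taylor remainder. You write that the factor $|v'-v|$ ``cancels the pole,'' but the Cauchy pole sits at $v'=0$, not at the expansion center $v'=v$, so $(v'-v)/(v'+i\eps)$ is not bounded. The statement can be salvaged, but via a different mechanism: observe that $(v'-v)\,r(v,v',w)=\phi(v')-\phi(v)$ where $\phi(v')=\partial_{v'}B(v'+w)h(v')$, so the remainder integrand is $\sign(v'-v)\bigl(\phi(v')-\phi(v)\bigr)e^{-k|\cdot|}/(v'+i\eps)$. One must still integrate by parts $1/(v'+i\eps)=\partial_{v'}\log(v'+i\eps)$; the derivative $\partial_{v'}\bigl[(\phi(v')-\phi(v))\sign(v'-v)e^{-k|\cdot|}\bigr]$ produces $\phi'\in L^2$, a vanishing delta contribution (since $\phi(v')-\phi(v)=0$ at $v'=v$), and lower-order terms, after which Cauchy--Schwarz against $\log(v'+i\eps)$ gives the $W^{1,1}_v$ bound. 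In other words, the $|v'-v|$ gain tames the delta from $\partial_v\sign(v-v')$ and renders the $v'$-derivative of the density harmless, but the pole at $v'=0$ is killed only by the integration by parts you still need to perform. You should make this explicit rather than attributing the cancellation to the wrong singularity; as written the argument appears to have a gap even though the underlying strategy is sound.
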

\begin{proof}
	For $w \in [b(-10), b(10)]$, using the identity that $$\partial_{v'}\log(v'+i\eps) = \frac{1}{v'+i\eps}$$
 and taking integration by parts, we get
	\begin{equation}
		\begin{split}
			T_{k, \eps}h(v, w) 
			=& -\int_\R \partial_{v'}\G_k(v+w, v'+w)\partial_{v'}B(v'+w)h(v')\log(v'+i\eps)dv' \\
			&- \int_\R \G_k(v+w, v'+w) \partial_{v'}\big[\partial_{v'}B(v'+w)h(v')\big]\log(v'+i\eps) dv' \\
			:=& T_1 + T_2.
		\end{split}
	\end{equation}
 
 {\bf Step 1: estimates of $T_1$}. Using the expression of $G_k(y, z)$ in (\ref{extendexpression}), we have
	\begin{equation}
		\begin{split}
			T_1 =& ~\Psi_k(v+w)\int_\R \bigg[e^{-k|b^{-1}(v+w) - b^{-1}(v'+w)|}\sign(v-v')\frac{\partial_{v'}B(v'+w)}{B(v'+w)}h(v')\\
   &\qquad \qquad \qquad \times\log(v'+i\eps)\bigg]dv' \\
			&- \Psi_k(v+w)\frac{\sinh k(1-b^{-1}(v+w))}{\sinh k}\int_\R\bigg[ e^{-k|b^{-1}(v'+w)|}\sign(v'+w - b(0))\\
			&\qquad\qquad \qquad \qquad \qquad \qquad \qquad \qquad \times\frac{\partial_{v'}B(v'+w)}{B(v'+w)}h(v')\log(v'+i\eps)\bigg]dv' \\
			&- \Psi_k(v+w)\frac{\sinh k(b^{-1}(v+w))}{\sinh k}\int_\R\bigg[ e^{-k|1-b^{-1}(v'+w)|}\sign(v'+w - b(1))\\
			&\qquad\qquad \qquad \qquad \qquad \qquad \qquad \times\frac{\partial_{v'}B(v'+w)}{B(v'+w)}h(v')\log(v'+i\eps)\bigg]dv'\\
			:=& T_{11} + T_{12} + T_{13}.
		\end{split}
	\end{equation}
	
	For $w \in [b(-10), b(10)]$, we have the following bounds
	\begin{equation} \label{eq 36}
		\begin{split}
			\|T_{11}(\cdot, w)\|_{L^\infty(\mathbb{R})} &\lesssim \|h\|_{L^2(\R)}\bigg(\int_{\R}e^{-2k|\ib(v+w) - \ib(v'+w)|} |\log (v'+i\eps)|^2dv'\bigg)^{\frac{1}{2}} \\
			&\lesssim \frac{1 + \log \la k\ra}{k^{\frac{1}{2}}}\|h\|_{L^2(\R)} \lesssim k^{-\frac{1}{4}}\|h\|_{L^2(\R)}.
		\end{split}
	\end{equation}
 Since $\Psi_k$ is compactly supported, (\ref{eq 36}) implies that
 \begin{equation}
     \|T_{11}(\cdot, w)\|_{L^2(\mathbb{R})} \lesssim k^{-\frac{1}{4}}\|h\|_{L^2(\R)}.
 \end{equation}
	 Note that
	\begin{equation} \label{calc 2.8}
		\begin{split}
			\partial_vT_{11}(v, w) = &- \frac{k\Psi_k(v+w)}{B(v+w)}\int_{\R} \bigg[e^{-k|b^{-1}(v+w) - b^{-1}(v'+w)|}\frac{\partial_{v'}B(v'+w)}{B(v'+w)}h(v')\\
   &\qquad \qquad \qquad \quad \times \log(v'+i\eps)\bigg]dv' \\
			& + 2\Psi_k(v+w)\frac{\partial_{v}B(v+w)}{B(v+w)}h(v)\log(v+i\eps)  \\
			& + \partial_v\Psi_k(v+w)\int_\R \bigg[ e^{-k|b^{-1}(v+w) - b^{-1}(v'+w)|}\sign(v-v')\\
			& \qquad \qquad \qquad \qquad \times \frac{\partial_{v'}B(v'+w)}{B(v'+w)}h(v')\log(v'+i\eps)\bigg]dv' \\
			:=& T_{111} + T_{112} + T_{113}.
		\end{split}
	\end{equation}
	By similar computation in (\ref{eq 36}),  $T_{111}$ and $T_{113}$ are bounded in $v$ for $w \in [b(-10), b(10)]$ with the following estimates
	\begin{equation}
		\begin{split}
			\|T_{111}(\cdot, w)\|_{L^2(\R)} +  \|T_{113}(\cdot, w)\|_{L^2(\R)}\lesssim k^{\frac{3}{4}} \|h\|_{L^2(\R)}\lesssim k^{-\frac{1}{4}} \|h\|_{H^1_k(\R)}. 
		\end{split}
	\end{equation}
	For $T_{112}$, using H\"{o}lder inequality and Lemma \ref{lemma 22} we have
 \begin{equation}
     \norm{T_{112}(\cdot, w)}_{L^2(\R)} \lesssim \norm{h}_{L^4(\R)} \lesssim k^{-\frac{3}{4}}\norm{h}_{H^1_k(\R)}.
 \end{equation}
 Hence by definition of the space $H^1_k(\R)$, we have
	\begin{equation}
		\norm{T_{11}(\cdot, w)}_{H^1_k(\R)} \lesssim |k|^{-\frac{1}{4}}\norm{h}_{H^1_k(\R)}.
	\end{equation}
 
	$T_{12}$ and $T_{13}$ can be treated in the same way. Note that by definition of the cutoff function $\Psi_k(v+w)$, $$\quad \Psi_k(v+w)\frac{\sinh k(1-\ib(v+w))}{\sinh k} \quad \text{and} \quad\Psi_k(v+w)\frac{\sinh k(\ib(v+w))}{\sinh k}$$ are bounded uniformly for $k\ge 1$ on the support of $\Psi_k(v+w)$. Hence the same estimate holds for $T_{12}$ and $T_{13}$. Therefore, we have for $w \in [b(-10), b(10)]$
	\begin{equation}\label{eq 310}
		\begin{split}
			\|T_{1}(\cdot, w)\|_{H^1_k(\R)} \lesssim k^{-\frac{1}{4}}\norm{h}_{H^1_k(\R)}.
		\end{split}
	\end{equation}

 {\bf Step2: estimates of $T_2$.} 
	Using the H\"{o}lder inequality, we have 
 \begin{equation}
 \begin{split}
          |T_2(v, w)| &\lesssim \norm{h}_{H^1(\R)}\bigg\{\int_\R \bigg(\G_k(v+w, v'+w)\log(v'+i\eps)\bigg)^2dv'\bigg\}^{\frac{1}{2}} \\
          &\lesssim k^{-\frac{5}{4}} \norm{h}_{H^1_k(\R)}.
 \end{split}
 \end{equation}
 Hence we have 
 \begin{equation}
     \norm{T_2(\cdot, w)}_{L^2(\R)} \lesssim k^{-\frac{5}{4}} \norm{h}_{H^1_k(\R)}.
 \end{equation}
	Taking one derivative in $v$ for $T_2$, we find that the derivative only acts on the Green function $\G_k$ and the cutoff function $\Psi_k$ which leads to a factor of $k$. We have
	\begin{equation}
		\begin{split}
			\|\partial_vT_2\|_{L^2(\R)} &\lesssim k^{-\frac{1}{4}} \norm{h}_{H^1_k(\R)}.
		\end{split}
	\end{equation}
	Therefore,
	\begin{equation} \label{eq 315}
		\norm{T_2}_{H^1_k(\R)} \lesssim k^{-\frac{1}{4}} \norm{h}_{H^1_k(\R)}.
	\end{equation}
	Combining (\ref{eq 315}) and (\ref{eq 310}), we get for $w \in [b(-10), b(10)]$, 
 \begin{equation}\label{eqq: 317}
     \norm{T_{k,\eps}h (\cdot, w)}_{H^1_k(\R)} \lesssim k^{-\frac{1}{4}} \norm{h}_{H^1_k(\R)}.
 \end{equation}

 {\bf Step 3: the case $w \notin [b(-10), b(10)]$}. Assume now $w \notin [b(-10), b(10)]$. Since $\partial_v'B(v'+w)$ is supported in $[b(-2), b(2)]$, we have $|v'| \gtrsim |w|$ on the support of $\partial_{v'}B(v'+w)$. Direct computation shows that 
 \begin{equation}\label{eqq: 318}
     \norm{T_{k,\eps}h (\cdot, w)}_{H^1_k(\R)} \lesssim \frac{1}{k|w|} \norm{h}_{H^1_k(\R)}.
 \end{equation}
Combining (\ref{eqq: 318}) and(\ref{eqq: 317}) we get for $w \in \R$,
\begin{equation}
     \norm{T_{k,\eps}h (\cdot, w)}_{H^1_k(\R)} \lesssim \frac{k^{-\frac{1}{4}}}{1 + |w|} \norm{h}_{H^1_k(\R)}.
 \end{equation}

 In view of (\ref{calc 2.8}), the most singular component of $\partial_v T_{k, \eps}h$ is $$2\Psi_k(v+w)\frac{\partial_{v}B(v+w)}{B(v+w)}h(v)\log(v+i\eps)$$. (\ref{ineq 3.4}) follows from a similar computation.
\end{proof}

\begin{proposition}[Limiting absorption principle]\label{lap}
	There exists $\sigma_0 > 0$ that is sufficiently small such that for $\eps \in [-\sigma_0, \sigma_0]\setminus\{0\}$, $k \ge 1$ and $f \in H^1_k(\R)$, we have for each $w \in \R$
	\begin{equation} \label{eq 314}
		\norm{f + \int_\R \G_k(v+w, v'+w)\partial_{v'}B(v'+w)\frac{f(v')}{v'+i\eps}dv'}_{H^1_k(\R)}  \ge \sigma_0 \norm{f}_{H^1_k(\R)}.
	\end{equation}
\end{proposition}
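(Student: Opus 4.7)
My plan is to combine the quantitative operator bounds of Lemma~\ref{lemma 31} with a compactness-and-contradiction argument leveraging the standing hypothesis that $L_k$ has no embedded eigenvalues. Writing $T_{k,\eps,w}$ for the map $h \mapsto T_{k,\eps}h(\cdot,w)$, the estimate \eqref{ineq 33} gives $\|T_{k,\eps,w}\|_{H^1_k \to H^1_k} \lesssim \tfrac{k^{-1/4}}{1+|w|}$. Choose $k_0$ large enough that this constant is below $\tfrac{1}{2}$ whenever $k \ge k_0$, and then $W_0$ large enough that the same holds for the finitely many $1 \le k < k_0$ whenever $|w| \ge W_0$. The triangle inequality then yields $\|(I+T_{k,\eps,w})f\|_{H^1_k} \ge \tfrac12 \|f\|_{H^1_k}$ on both regions. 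It remains to establish the bound on the compact parameter range $1 \le k \le k_0 - 1$, $|w| \le W_0$, $\eps \in [-\sigma_0, \sigma_0] \setminus \{0\}$.

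On this compact range I argue by contradiction: if no $\sigma_0 > 0$ works, then after passing to subsequences one produces $k$ fixed, $w_n \to w_* \in [-W_0, W_0]$, $\eps_n \to \eps_* \in [-\sigma_0, \sigma_0]$, and $f_n \in H^1_k(\R)$ with $\|f_n\|_{H^1_k} = 1$ and $f_n + T_{k,\eps_n,w_n} f_n \to 0$. For $\eps_* \ne 0$, the operator $T_{k,\eps_*,w_*}$ is a compact operator on $H^1_k(\R)$ with smooth, compactly-supported kernel, and $I + T_{k,\eps_*,w_*}$ is boundedly invertible (a nontrivial kernel element corresponds to a non-real eigenvalue of $L_k$, which can be excluded in a neighborhood of any $(\eps_*, w_*)$ with $\eps_* \ne 0$ by standard spectral continuity). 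Hence $\eps_* = 0$, and WLOG $\eps_n \to 0^+$.

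Extract a weak limit $f_n \rightharpoonup f_*$ in $H^1_k(\R)$ and upgrade to strong convergence in $L^p_{\mathrm{loc}}$ and $C^0_{\mathrm{loc}}$ via Rellich compactness and the one-dimensional Sobolev embedding. Since the kernel of $T_{k,\eps_n,w_n}$ is uniformly compactly supported in $v'$, Plemelj's formula
$$\frac{1}{v' + i\eps_n} \longrightarrow \mathrm{p.v.}\,\frac{1}{v'} - i\pi\delta_0$$
identifies the limit of $T_{k,\eps_n,w_n} f_n$ with the boundary-value operator $T_{k,0+,w_*} f_*$, so $(I + T_{k,0+,w_*}) f_* = 0$. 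Undoing the change of variables $v = b(y)$, $w = b(y_0)$, this equation expresses that the pullback of $f_*$ is annihilated by $(-w_* + L_k - i0)^{-1}$, i.e., $f_*$ corresponds to an embedded eigenfunction of $L_k$ at eigenvalue $-w_*$. Provided $f_* \not\equiv 0$, this contradicts the standing assumption on $L_k$.

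The main obstacle is ruling out $f_* \equiv 0$. Escape of mass to infinity is prevented because $\partial_{v'} B(v'+w_n)$ and $\Psi_k(v+w_n)$ are uniformly compactly supported, so $f_n = -T_{k,\eps_n,w_n} f_n + o(1)$ forces $f_n$ to concentrate in a uniform compact set in $v$. To exclude concentration at $v=0$, I will invoke the refined bound \eqref{ineq 3.4}: the only potentially singular contribution to $\partial_v T_{k,\eps,w} h$ is $-2\Psi_k(v+w)\tfrac{B'(v+w)}{B(v+w)} h(v) \log(v+i\eps)$, and $\log(v+i\eps_n)$ is uniformly bounded in every $L^p_{\mathrm{loc}}$. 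Strong $L^p_{\mathrm{loc}}$ convergence of $f_n$ transfers to strong $L^2_{\mathrm{loc}}$ convergence of this singular piece to its limit; combined with the $W^{1,1}_{\mathrm{loc}}$-control on the remainder, $f_* \equiv 0$ would force $\|T_{k,\eps_n,w_n}f_n\|_{H^1_k} \to 0$, hence $\|f_n\|_{H^1_k} \to 0$, contradicting $\|f_n\|_{H^1_k} = 1$. This produces the desired $\sigma_0 > 0$.
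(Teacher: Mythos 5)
Your high-level strategy---Lemma~\ref{lemma 31} to reduce to a compact parameter set, then a compactness-and-contradiction argument invoking the absence of embedded eigenvalues---matches the paper's. The reduction via \eqref{ineq 33} and the extraction of a (strong) limit from the failing sequence are correct and essentially as in the paper. However, your argument has a genuine gap at the decisive step, which you compress into a single sentence: ``Undoing the change of variables \dots $f_*$ corresponds to an embedded eigenfunction of $L_k$.''

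That step does not follow as stated, for two linked reasons. First, the operator $T_{k,\eps,w}$ and hence the limit equation $(I+T_{k,0+,w_*})f_*=0$ live on the \emph{extended} real line, built from the cutoff-modified Green's function $\Psi_k(y)\widetilde G_k(y,z)$, whereas the assumption of no embedded eigenvalues concerns $L_k$ acting on $L^2(0,1)$. A nontrivial solution of the extended homogeneous problem is not automatically an eigenfunction of $L_k$; one must show that it vanishes on $\R\setminus[0,1]$, and this is exactly where the specific sign requirement $\Psi_k'>0$ on $(-\delta_0/k,0)$, $\Psi_k'<0$ on $(1,1+\delta_0/k)$ enters (see \eqref{eqq 335} and the surrounding energy identity). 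Without that piece, your extension could in principle manufacture spurious resonances that have nothing to do with the original operator. Second, even on $[0,1]$, to identify the limit as an embedded eigenfunction one must know the associated vorticity $H(y)=b''(y)g(y)/(b(y)-b(y_0))$ lies in $L^2$. This is precisely what the imaginary-part computation \eqref{eq 320}--\eqref{eq 321} delivers (it forces $b''(y_0)g(y_0)=0$, killing the $\delta$-term from Plemelj), and your appeal to Plemelj by itself leaves a $\delta$-mass in the equation that blocks the $L^2$ interpretation. Finally, the no-embedded-eigenvalue hypothesis only has teeth when $w_*\in(b(0),b(1))$; for $w_*\notin(b(0),b(1))$ (the paper's Case~II) a separate, purely quantitative Poincar\'e-type argument on the overhang intervals $(-\delta_0/k,0)$ and $(1,1+\delta_0/k)$ is required. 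None of these three steps appears in your proposal, so the conclusion $f_*\equiv 0$ (and with it the contradiction) is not justified.

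A smaller point: your treatment of the case $\eps_*\neq 0$ via ``standard spectral continuity'' is asserted but would itself need an argument, since $L_k$ is non-self-adjoint and the paper's standing hypothesis rules out only embedded eigenvalues, not complex spectrum; the paper sidesteps this by extracting a subsequence with $\eps_j\to 0+$. Your argument for excluding $f_*\equiv 0$ (compact support of the kernel plus the $W^{1,1}$ control from \eqref{ineq 3.4}) is fine and is essentially what the paper does when it upgrades to strong $H^1_k$ convergence.
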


\begin{proof}
		Assume (\ref{eq 314}) does not hold, we can then find $\eps_j$, $w_j \in \R$, $k_j$ and $f_j$ such that $\eps_j \to 0+$, $\|f_j\|_{H^1_{k_j}(\R)} = 1$ and 
	\begin{equation}
		\lim_{j\to \infty}\norm{f_j + \int_\R \G_{k_j}(v+w_j, v'+w_j)\partial_{v'}B(v'+w_j)\frac{f_j(v')}{v'+i\eps_j}dv'}_{H^1_{k_j}(\R)} = 0.
	\end{equation}
	In view of Lemma \ref{lemma 31}, we can assume that $k_j$ and $w_j$ are bounded and thus we can replace $k_j$ simply by some fixed $k \in \mathbb{Z}^+$. Assume $w_j \to w_0$ for some $w_0 \in \R$. By the estimate (\ref{ineq 3.4}) and the fact that $\Psi_k$ is compactly supported, we can choose a subsequence of $\{f_j\}$ (also denoted by $\{f_j\})$ such that $f_j \to f$ in $H^1_k(\R)$ and $\|f\|_{H^1_k(\R)} = 1.$
	Therefore, we have
	\begin{equation}
		f(v) + \lim_{j\to \infty}\int_\R \G_k(v+w_0, v'+w_0)\partial_{v'}B(v'+w_0)\frac{f(v')}{v'+i\eps_j}dv' = 0.
	\end{equation}
 
	Let $y = b^{-1}(v), y_0 = b^{-1}(w_0)$ and set
	\begin{equation}
		h(y) = f(v).
	\end{equation}
	We have
	\begin{equation} \label{eq 318}
		h(y) + \lim_{j\to\infty}\int_\R G_k(y, z)b''(z)\frac{h(z)}{b(z) - b(y_0) + i\eps_j}dz = 0.
	\end{equation}
	
	Recall that $G_k(y, z)$ is given by
	\begin{equation} \label{eqq 328}
		\begin{split}
			G_k(y,z) &= \Psi_k(y)\bigg[\frac{1}{k}e^{-k|z-y|} - \frac{\sinh (k(1-y))}{k\sinh k}e^{-k|z|} - \frac{\sinh (ky)}{k\sinh k}e^{-k|z-1|}\bigg] \\
			&:= \Psi_k(y)\widetilde{G}_k(y, z),
		\end{split}
	\end{equation}
	and $\widetilde{G}_k(y, z)$ satisfies for $y, z \in \R$,
	\begin{equation}
		(k^2 - \frac{d^2}{dy^2})\widetilde{G}_k(y, z) = \delta(y-z).
	\end{equation}
	It follows from (\ref{eq 318}) that we can write $h(y)$ as 
	\begin{equation}
		h(y) = \Psi_k(y)g(y)
	\end{equation}
	   where $g(y)$ is not identically zero and has the same support as $\Psi_k(y)$. Furthermore, $g(y)$ satisfies for $y \in \R$
	\begin{equation}\label{eq for g}
		g(y) + \lim_{j\to \infty} \int_\R\widetilde{G}_k(y, z)b''(z)\frac{\Psi_k(z)g(z)}{b(z)-b(y_0)+i\eps_j}dz = 0.
	\end{equation}
We apply $k^2 - \frac{d^2}{dy^2}$ to (\ref{eq for g}) and get for $y\in \R$ 
	\begin{equation}
		k^2g(y) - \partial^2_y g(y) + \lim_{j\to \infty}\frac{b''(y)\Psi_k(y)g(y)}{b(y)-b(y_0) + i\eps_j} = 0.
	\end{equation}
	Therefore, for $y \in \R$, in the sense of distributions
	\begin{equation} \label{eq 320}
 \begin{split}
		&k^2g(y) - \partial^2_y g(y) + \lim_{j\to \infty}\frac{(b(y) - b(y_0))b''(y)\Psi_k(y)g(y)}{(b(y)-b(y_0))^2 + \eps_j^2} \\
  &+ iC(y_0)\delta(y-y_0)b''(y_0)\Psi_k(y_0)g(y_0) = 0
  \end{split}
	\end{equation}
	for some real number $C(y_0)\neq 0$. Multiplying (\ref{eq 320}) by $\overline{g(y)}$, integrating over $\R$ and taking the imaginary part, we get
	\begin{equation} \label{eq 321}
		b''(y_0)\Psi_k(y_0)g(y_0) = b''(y_0)h(y_0) = 0.
	\end{equation}
 In view of (\ref{eq 321}), \eqref{eq 320} is reduced to 
 \begin{equation}\label{eq: 332}
     k^2g(y) - \partial^2_y g(y) + \frac{b''(y)\Psi_k(y)g(y)}{b(y)-b(y_0)} = 0
 \end{equation}
We next show that $g(y) \equiv 0$ for $y \in \R$. This contradicts with the fact $\norm{f}_{H^1_k(\R)} = 1$. We consider two cases: $y_0 \in (0, 1)$ and $y_0 \notin (0, 1)$.

{\bf Case I: $y_0 \in (0, 1)$}. We first show that $g(y) \equiv 0$ for $y \in [0, 1]$. Set
\begin{equation}
    H(y) = \frac{b''(y)g(y)}{b(y)-b(y_0)}.
\end{equation}
By (\ref{eq 321}), $H(y) \in L^2(\R)$. Assume first $\norm{H}_{L^2(0, 1)}>0$. Note that for $y \in [0, 1]$, $G_k(y,z) = 0$ for $z \notin [0, 1]$ and $\Psi_k(y) = 1$. It follows from \eqref{eq: 332} that for $y \in [0, 1]$
\begin{equation}
    g(y) + \int_0^1 G_k(y, z)H(z) dz = 0,
\end{equation}
which leads to 
\begin{equation}
    (b(y) - b(y_0))H(y) + b''(y)\int_0^1 G_k(y, z)H(z) dz = 0.
\end{equation}
This contradicts with the assumption that the linearized operator $L_k$ does not have embedded eigenvalues. Hence $\norm{H}_{L^2(0, 1)} = 0$. Equation \eqref{eq: 332} and the boundary condition $g(0) = g(1) = 0$ give that $g(y) \equiv 0$ for $y \in [0, 1]$.

Next, we show that $g(y) \equiv 0$ for $y \ge 0$ and $y \le 1$. The fact $g(y) \equiv 0$ on $[0, 1]$ and \eqref{eq: 332} implies that $g \in H^2(\R)$ and $g'(0) = g'(1) = 0$. Multiplying (\ref{eq: 332}) by $g(y)$ and integrating for $y \ge 1$, we get
\begin{equation} \label{eq 326}
	\begin{split}
		0 = \int_1^\infty&\left[k^2|g(y)|^2+|\partial_yg(y)|^2 + \frac{(b'(y))^2\Psi_k(y)g^2(y)}{(b(y) - b(y_0))^2} \right.\\
		&-\left. \frac{b'(y)\Psi'_k(y)g^2(y)}{b(y) - b(y_0)} - \frac{2b'(y)\Psi_k(y)g_k(y)\partial_yg(y)}{b(y) - b(y_0)}\right]dy.
	\end{split}
\end{equation}
By assumption, $0\le \Psi_k(y) \le 1$, we have
\begin{equation}
	\begin{split}
		&\int_{1}^\infty\left[|\partial_yg(y)|^2 + \frac{(b'(y))^2\Psi_k(y)g^2(y)}{(b(y) - b(y_0))^2}- \frac{2b'(y)\Psi_k(y)g(y)\partial_yg(y)}{b(y) - b(y_0)}\right]dy \\
		\ge & \int_{1}^\infty\left[|\partial_yg(y)|^2 + \frac{(b'(y))^2\Psi^2_k(y)g(y)}{(b(y) - b(y_0))^2}- \frac{2b'(y)\Psi_k(y)g(y)\partial_yg(y)}{b(y) - b(y_0)}\right]dy \\
		= & \int_{1}^\infty\left[\partial_yg(y) - \frac{b'(y)\Psi_k(y)g(y)}{b(y)-b(y_0)}\right]^2dy \ge 0.
	\end{split}
\end{equation}
Since $b(y)$ is strictly increasing and we assumed that $\Psi'_k(y) \le 0$ for $y \ge 1$, we have 
\begin{equation} \label{eqq 335}
	-\frac{b'(y)\Psi'_k(y)g^2(y)}{b(y) - b(y_0)} \ge 0.
\end{equation}
Therefore, it follows from (\ref{eq 326}) that $g(y) \equiv 0$ for $y \ge 1$. $g(y) \equiv 0$ for $y \le 0$ follows from a similar argument.

{\bf Case II: $y_0 \notin (0, 1)$.} In this case $g(y) \equiv 0$ for $y \in [0, 1]$ still holds since the linearized operator $L_k$ is assumed to have no embedded eigenvalues. Note that $g(y)$ is supported on $(-\delta_0 /k, 1 + \delta_0/k)$. Multiplying (\ref{eq: 332}) by $g(y)$ and integraing over the interval $J: = (1, 1 + \delta_0 / k)$, we get
\begin{equation}\label{eq: 339}
    k^2\norm{g}^2_{L^2(J)} + \norm{\partial_y g}^2_{L^2(J)} + \int_1^{1+\delta_0/k} \frac{b''(y)\Psi_k(y)(g(y))^2}{b(y) - b(y_0)} dy = 0.
\end{equation}
Assume first $y_0 \in J$ and $g(y_0) \neq 0$. By assumption of $\Psi_k(y)$, $\Psi_k(y_0)\neq 0$. Hence from (\ref{eq 321}) we get $b''(y_0) = 0$. Therefore, 
\begin{equation}\label{eq: 340}
    \begin{split}
        \lvert \int_1^{1+\delta_0/k} \frac{b''(y)\Psi_k(y)(g(y))^2}{b(y) - b(y_0)} dy \rvert &\le \norm{g}^2_{L^\infty(J)} \lvert \int_1^{1+\delta_0/k} \frac{b''(y) - b''(y_0)}{b(y) - b(y_0)} dy \rvert \\
        &\le \frac{\delta_0}{k} \norm{b'' / b'}_{L^\infty(J)}\norm{g}^2_{L^\infty(J)}.
    \end{split}
\end{equation}
Since $g(1) = 0$, we have for $y \in J$
\begin{equation}\label{eq: 341}
    \begin{split}
        |g(y)| &= |g(y) - g(1)| \\
        &= \lvert \int_1^{ y} \partial_tg(t)dt \rvert \\
        &\le \big(\frac{\delta_0}{k}\big)^{1/2}\norm{\partial_yg}_{L^2(J)}.
    \end{split}
\end{equation}
It follows from \eqref{eq: 339}, \eqref{eq: 340} and \eqref{eq: 341} that
\begin{equation}\label{eq: 342}
 k^2\norm{g}^2_{L^2(J)} + \norm{\partial_y g}^2_{L^2(J)}  \le \big(\frac{\delta_0}{k}\big)^2\norm{b'' / b'}_{L^\infty(J)} \norm{\partial_y g}^2_{L^2(J)}.
\end{equation}
By assumption of the background flow $b(y)$, $b''(y) / b'(y)$ is bounded for all $y \in \R$. Therefore, there exists a $\delta_0 > 0$ sufficiently small such that 
\begin{equation}
    \big(\frac{\delta_0}{k}\big)^2\norm{b'' / b'}_{L^\infty(J)} < 1,
\end{equation}
which implies $g(y) \equiv 0$ on $J$.

Next, we assume that $y_0 \in J$ and $g(y_0) = 0$. We have 
\begin{equation}\label{eq: 344}
\begin{split}
        &\lvert \int_1^{1+\delta_0/k} \frac{b''(y)\Psi_k(y)(g(y))^2}{b(y) - b(y_0)} dy \rvert \\
        \le &\norm{b''}_{L^\infty(J)}\int_1^{1+\delta_0/k}\frac{\big[g(y) - g(y_0)\big]^2}{|b(y) - b(y_0)|}dy \\
        = &\norm{b''}_{L^\infty(J)} \int_1^{1+\delta_0/k}\frac{\big[\int_{y_0}^y \partial_sg(s)ds\big]^2}{|b(y) - b(y_0)|}dy \\
        \le &\norm{b''}_{L^\infty(J)} \norm{\partial_yg}^2_{L^2(J)}\int_1^{1+\delta_0/k} \frac{|y - y_0|}{|b(y) - b(y_0)|}dy \\
        \le &\frac{\delta_0}{k}\norm{b''}_{L^\infty(J)} \norm{1/b'}_{L^\infty(J)}\norm{\partial_yg}^2_{L^2(J)}.
\end{split}
\end{equation}
There exists a $\delta_0$ sufficiently small that 
\begin{equation}
    \frac{\delta_0}{k}\norm{b''}_{L^\infty(J)} \norm{1/b'}_{L^\infty(J)} < 1,
\end{equation}
which also implies that $g(y) \equiv 0$ on $J$.

Lastly, we assume that $y_0 \notin J$. In this case 
\begin{equation}
\begin{split}
    \lvert \int_1^{1+\delta_0/k} \frac{b''(y)\Psi_k(y)(g(y))^2}{b(y) - b(y_0)} dy \rvert &\le \norm{b''}_{L^\infty(J)}\int_1^{1+\delta_0/k}\frac{\big[g(y)\big]^2}{|b(y) - b(y_0)|}dy \\
    &\le \norm{b''}_{L^\infty(J)}\int_1^{1+\delta_0/k}\frac{\big[g(y)\big]^2}{|b(y) - b(1 + \delta_0 /k)|}dy.
\end{split}
\end{equation}
Using the fact that $g(1+\delta_0/k) = 0$ and similar arguments in \eqref{eq: 344}, we can show that $g(y) \equiv 0$ on $J$ if $\delta_0 > 0$ is sufficiently small. 

In conclusion, we have showed that $g(y) \equiv 0$ on $J = (1, 1 + \delta_0 / k)$ if $y_0 \notin [0, 1]$ and $\delta_0 > 0$ is sufficiently small. Using the same method we can show that $g(y)$ also vanishes on $(-\delta_0/k, 0)$ for some $\delta_0 > 0$. Therefore, $g(y)$ vanishes for all $y \in \R$ which finishes the proof of the limiting absorption principle.
\end{proof}

For later application, we also need the following result.
\begin{proposition} \label{solve Phi}
	Let $\sigma_0 > 0$ be the constant given by Proposition \ref{lap}. For $\eps \in (-\sigma_0, \sigma_0) \setminus \{0\}$, $k \ge 1$, let $T_{k,\eps}$ be the operator defined in \eqref{Operator T}. For $w \in \R$, there exist two functions $\Phi^{0}_{k,\eps}(v, w), \Phi^{1}_{k, \eps}(v, w) \in H^1_k(\R)$ such that
	\begin{equation} \label{eq 329}
		\Phi^{0}_{k, \eps}(v, w) + T_{k,\eps}\Phi^{0}_{k, \eps}(v, w) = \Psi_k(v+w)\frac{\sinh k(1 - b^{-1}(v+w))}{\sinh k}
	\end{equation}
	and
	\begin{equation} \label{eq 330}
		\Phi^{1}_{k,\eps}(v, w) + T_{k,\eps}\Phi^{1}_{k, \eps}(v, w) = \Psi_k(v+w)\frac{\sinh( kb^{-1}(v+w))}{\sinh k}.
	\end{equation}
\end{proposition}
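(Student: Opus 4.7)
The plan is to obtain $\Phi^{0}_{k,\eps}$ and $\Phi^{1}_{k,\eps}$ by inverting $I + T_{k,\eps}$ on $H^1_k(\R)$ and applying that inverse to each right-hand side. The limiting absorption principle (Proposition \ref{lap}) already provides a quantitative lower bound $\|(I+T_{k,\eps})f\|_{H^1_k(\R)} \ge \sigma_0 \|f\|_{H^1_k(\R)}$, which gives injectivity and closed range. To upgrade this to bijectivity I will combine it with the Fredholm alternative, which requires compactness of $T_{k,\eps}$.

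The first step is to check that the two data functions
\[
F^{0}(v) := \Psi_k(v+w)\frac{\sinh k(1 - b^{-1}(v+w))}{\sinh k}, \qquad F^{1}(v) := \Psi_k(v+w)\frac{\sinh(k b^{-1}(v+w))}{\sinh k}
\]
belong to $H^1_k(\R)$ for each fixed $w \in \R$. This is direct: the cutoff $\Psi_k(v+w)$ confines the support of $F^{\iota}$ to a $v$-interval of length $O(1)$, and on that support the ratios $\sinh(k \cdot)/\sinh k$ are bounded by a universal constant because $b^{-1}(v+w) \in (-\delta_0/k,\, 1+\delta_0/k)$; a $v$-derivative produces at worst a factor of $k$, which is exactly what the $H^1_k$-norm absorbs.

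The second step is to show that, for each fixed $k \ge 1$, $w \in \R$, and $\eps \in (-\sigma_0,\sigma_0)\setminus\{0\}$, the operator $T_{k,\eps}: H^1_k(\R) \to H^1_k(\R)$ is compact. The kernel is $\G_k(v+w,v'+w)\partial_{v'}B(v'+w)/(v'+i\eps)$; since $b''$ vanishes outside $[-2,2]$, the function $\partial_{v'}B(v'+w)$ has compact support in $v'$, and for fixed $\eps\ne 0$ the factor $1/(v'+i\eps)$ is bounded and smooth on this support. The $\Psi_k(v+w)$ factor in $\G_k$ furthermore confines the image of $T_{k,\eps}$ to a bounded $v$-interval, and differentiating in $v$ (as in the decomposition \eqref{calc 2.8}) shows that $T_{k,\eps}$ maps $H^1_k(\R)$ boundedly into a space of $H^1_k$-functions with uniformly compact support. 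A Rellich--Kondrachov compact embedding argument on this bounded interval then yields that $T_{k,\eps}$ is compact on $H^1_k(\R)$.

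With compactness in hand, the Fredholm alternative applies: $I + T_{k,\eps}$ is Fredholm of index zero, and the LAP (Proposition \ref{lap}) rules out any nontrivial kernel, so $I + T_{k,\eps}$ is bijective on $H^1_k(\R)$ with bounded inverse of norm at most $\sigma_0^{-1}$. Setting $\Phi^{0}_{k,\eps}(\cdot,w) := (I + T_{k,\eps})^{-1} F^{0}$ and $\Phi^{1}_{k,\eps}(\cdot, w) := (I + T_{k,\eps})^{-1} F^{1}$ produces the claimed solutions to \eqref{eq 329} and \eqref{eq 330}. The main obstacle is really only the compactness step, which must be argued with the weighted $H^1_k$ norm (rather than an unweighted $H^1$ norm) and with care that the estimates are uniform in $w$; once this is isolated, the rest of the argument is a standard application of Fredholm theory.
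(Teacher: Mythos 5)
Your proposal is correct, and the core of it coincides with the paper's: both obtain $\Phi^{\iota}_{k,\eps}$ by inverting $I+T_{k,\eps}$ on $H^1_k(\R)$, using Proposition \ref{lap} to control the inverse. The paper presents this slightly indirectly (it solves $(I+T_{k,\eps})\widetilde{\Psi}_{k,\eps}=-T_{k,\eps}F^{0}$ and then sets $\Phi^{0}_{k,\eps}=\widetilde{\Psi}_{k,\eps}+F^{0}$, which is an algebraic rewriting of your direct inversion). The genuine difference is that you explicitly address \emph{surjectivity} of $I+T_{k,\eps}$: Proposition \ref{lap} as stated only gives the coercivity estimate $\norm{(I+T_{k,\eps})f}_{H^1_k}\ge\sigma_0\norm{f}_{H^1_k}$, which yields injectivity and closed range but not, by itself, the existence of a solution. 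The paper passes from the LAP to solvability without comment; you close this gap by showing $T_{k,\eps}$ is compact on $H^1_k(\R)$ for fixed $\eps\ne0$ and then invoking the Fredholm alternative. That is the right way to justify the invertibility that the paper (here and throughout, e.g.\ in the $(I+T_{k,\eps})^{-1}$ appearing in Theorem \ref{thm 49}) uses implicitly. One small caution on the compactness step: boundedness into $H^1_k$ with compact support is not enough for Rellich--Kondrachov; what actually gives compactness is a \emph{gain of regularity}. For fixed $\eps\ne0$ the kernel $\G_k(v+w,v'+w)\partial_{v'}B(v'+w)/(v'+i\eps)$ is Lipschitz in $v$ with $\partial_v^2$ producing only a $\delta(v-v')$ contribution multiplied by the bounded smooth function $\log(v+i\eps)$, so $T_{k,\eps}$ maps $L^2$ (and a fortiori $H^1_k$) into $H^2$ of a fixed bounded interval, from which compactness into $H^1_k$ follows. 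Making that gain explicit would tighten the argument; otherwise the proof is sound and arguably more complete than the paper's.
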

\begin{proof}
	The above two equations are similar so we only need to focus on (\ref{eq 329}). By Lemma \ref{lemma 31}, 
	\begin{equation}
		- T_{k,\eps}\left(\Psi_k(v+w)\frac{\sinh k(1 - b^{-1}(v+w))}{\sinh k}\right) \in H^1_k(\R).
	\end{equation}
	It follows from the limiting absorption principle Proposition \ref{lap} that there exists a unique function $\widetilde{\Psi}_{k, \eps}(v, w) \in H^1_k(\R)$ such that \begin{equation}
		\widetilde{\Psi}_{k, \eps} + T_{k, \eps}\left(\widetilde{\Psi}_{k, \eps}\right) = - T_{k,\eps}\left(\Psi_k(v+w)\frac{\sinh k(1 - b^{-1}(v+w))}{\sinh k}\right).
	\end{equation}
	Set 
	\begin{equation}
		\Phi^{0}_{k, \eps}(v, w) = \widetilde{\Psi}_{k, \eps}(v, w) + \Psi_k(v+w)\frac{\sinh k(1 - b^{-1}(v+w))}{\sinh k}.
	\end{equation}
	One can verify that $\Phi^{0}_{k, \eps}(v, w)$ solves (\ref{eq 329}). 
\end{proof}
 
\section{Singularity structures}
The analysis of the singularity structure of the spectrum density function $\Theta^\iota_{k, \eps}(v, w)$ is the most crucial part to understand the long time behavior of the stream function. Recall that for $v\in \R, w \in [b(0), b(1)]$, $\Theta^\iota_{k, \eps}(v, w)$ solves the following equation
\begin{equation}\label{mainequation1}
		\begin{split}
			&\Theta^\iota_{k,\epsilon}(v,w) + \int_\R\G_k(v+w, v'+w)\frac{\partial_{v'}B(v'+w)\Theta^\iota_{k,\epsilon}(v',w)}{v'+i\iota\epsilon}dv' \\
			= &\int_\R\G_k(v+w, v'+w)\frac{1}{B(v'+w)}\frac{f^k_0(v'+w)}{v'+i\iota\epsilon}dv'.
		\end{split}
	\end{equation}

	\subsection{Singularity structure of right-hand side of (\ref{mainequation1})}
	To illustrate the main idea, we start with the analysis of singularities of 
	\begin{equation}
		A_k(v, w) := \int_\R\G_k(v+w, v'+w)\frac{h(v'+w)}{v'+i\epsilon}dv',
	\end{equation} 
	where $h(\cdot)$ is compactly supported smooth function on the real line.  For $v, v' \in \R$, let
	\begin{equation}\label{free part}
		\G^{fr}_k(v, v') := \Psi_k(\ib(v))\frac{1}{k}e^{-k|\ib(v)-\ib(v')|}
	\end{equation}
	be the \textit{free part} of the Green function, and
	\begin{equation}\label{boundary part}
		\G^b_k(v, v') := \G_k(v,v') - \G^{fr}_k(v,v')
	\end{equation}
	be the \textit{boundary part} of the Green function. The boundary part $\G^b_k$ can be further splitted into two parts $\G_k^b(v, v') = \G_k^{b_0}(v, v') + \G_k^{b_1}(v, v')$ where
	\begin{equation} \label{eq 222}
        \begin{split}
            \G_k^{b_0}(v, v') &:= -\Psi_k(\ib(v))\frac{\sinh(k(1-\ib(v)))}{k\sinh k}e^{-k|\ib(v')|} \\
            &:= \Phi^{b0}_k(v) \frac{e^{-k|\ib (v')|}}{k}
        \end{split}
	\end{equation}
	captures the boundary effect from the side $b(0)$, and 
	\begin{equation}
 \begin{split}
     		\G_k^{b_1}(v, v') &:=  -\Psi_k(\ib(v))\frac{\sinh(k\ib(v))}{k\sinh k}e^{-k|\ib(v')-1|} \\
       &:= \Phi^{b1}_k(v) \frac{e^{-k|\ib (v')-1|}}{k}
 \end{split}
	\end{equation}
	captures the boundary effect from the side $b(1)$. Then we can decompose for $v, w \in \R$
	\begin{equation} \label{general integral}
		\begin{split}
			&A_k(v, w) \\ = &~\int_\R\G_k^{fr}(v+w, v'+w)\frac{h(v'+w)}{v'+i\epsilon}dv'+ \int_\R\G_k^b(v+w, v'+w)\frac{h(v'+w)}{v'+i\epsilon}dv'\\
			:= &~A_k^{fr}(v, w) + A_k^b(v, w) .
		\end{split}
	\end{equation}
	We study the free term $A^{fr}_k(v, w)$ first, which can be written as
	\begin{equation} \label{toyintegral}
		A^{fr}_k(v, w) = \Psi_k(v+w)\int_\R \frac{1}{k}e^{-k|\ib(v+w)-\ib(v'+w)|}\frac{h(v'+w)}{v'+i\epsilon}dv'.
	\end{equation}
	Taking one derivative in $v$, we get for $v, w \in \R$
	\begin{equation}\label{computation1} 
		\begin{split}
			\partial_vA^{fr}_k(v, w) &= -\Psi_k(v+w)\int_\R e^{-k|\ib(v+w)-\ib(v'+w)|}\frac{\sign(v-v')}{B(v+w)}\frac{h(v'+w)}{v'+i\eps}dv'  \\
   &\quad +\partial_v \Psi_k(v+w)\int_\R \frac{1}{k}e^{-k|\ib(v+w)-\ib(v'+w)|}\frac{h(v'+w)}{v'+i\epsilon}dv' \\
   &:= T_1 + T_2.
		\end{split}
	\end{equation}
 Taking integration by parts, we get
 \begin{equation}\label{comp 411}
     \begin{split}
         T_1 = &-\Psi_k(v+w)\int_\R \bigg[e^{-k|\ib(v+w)-\ib(v'+w)|}\frac{\sign(v-v')}{B(v+w)}h(v'+w) \\ &\qquad \qquad \qquad \times\partial_{v'}\log(v'+i\eps)\bigg]dv'\\
         = &- \Psi_k(v+w)\int_\R \bigg[ke^{-k|\ib(v+w)-\ib(v'+w)|}\frac{h(v'+w)}{B(v+w)B(v'+w)}\\ &\qquad \qquad \qquad \times\log(v'+i\eps)\bigg]dv' \\
			&+\Psi_k(v+w)\int_\R \bigg[e^{-k|\ib(v+w)-\ib(v'+w)|}\sign(v'-v)\frac{\partial_{v'}h(v'+w)}{B(v+w)}\\  &\qquad \qquad \qquad\times\log(v'+i\eps)\bigg]dv' \\
   &+\frac{2\Psi_k(v+w)}{B(v+w)}h(v+w)\log(v+i\eps)
     \end{split}
\end{equation}
	Here in the last equation we used the fact that for $v, v' \in \R$ $$\frac{d}{dv'}\sign(v'-v) = 2\delta(v'-v).$$ 
 Therefore, we can rewrite for $v, w \in \R$
 \begin{equation}\label{comp: 411}
     \partial_v \A_k^{fr}(v, w) = \frac{2\Psi_k(v+w)}{B(v+w)}h(v+w)\log(v+i\eps) + \mathcal{R}_{k, \eps}^1(v, w),
 \end{equation}
 where the remainder term $\mathcal{R}_{k, \eps}^1(v, w)$ is defined as
 \begin{equation}\label{comp 413}
 \begin{split}
          \mathcal{R}_{k, \eps}^1(v, w)
          :=& - \Psi_k(v+w)\int_\R \bigg[ke^{-k|\ib(v+w)-\ib(v'+w)|}\frac{h(v'+w)}{B(v+w)B(v'+w)}\\ &\qquad \qquad \qquad \times\log(v'+i\eps)\bigg]dv' \\
			& +\Psi_k(v+w)\int_\R \bigg[e^{-k|\ib(v+w)-\ib(v'+w)|}\sign(v'-v)\\&\qquad \qquad \qquad \times\frac{\partial_{v'}h(v'+w)}{B(v+w)}\log(v'+i\eps)\bigg]dv' \\
   &- \partial_v \Psi_k(v+w)\int_\R \bigg[\partial_{v'}\big[\frac{1}{k}e^{-k|\ib(v+w)-\ib(v'+w)|}h(v'+w)\big]\\ &\qquad \qquad \qquad \times\log(v'+i\eps)\bigg]dv'.
 \end{split}
 \end{equation}
 
We observe from the above calculation that when we differentiate $A^{fr}_k(v, w)$ in the variable $v$, we get a term with $\log(v+i\eps)$. This term introduces singularity as $\eps \to 0+$. For the remainder term, we also have similar singularity structure. We formulate this rigorously as the following lemma.
\begin{lemma}\label{lemma 41 fr}
    	Let $N \ge 1$, $k \ge 1$, $\eps \in (-1, 1)\setminus\{0\}$. For any $h \in H^N_k(\R)$, define for $v, w \in \R$
		\begin{equation}
			\mathcal{R}^0_{k,\eps}(v,w) = \int_\R \G^{fr}(v+w, v'+w) \frac{h(v'+w)}{v'+i\eps}dv'.
		\end{equation}
		Then for $0 \le j \le N - 1$ there exist functions $A_k^{j}(v)$ and $\mathcal{R}^{j}_{k,\eps}(v,w)$ satisfying the following bounds (setting $A_k^0(v, w) = 0$)
  \begin{equation}\label{est 415}
				\norm{\partial_v^mA^j_{k}(v)}_{H^1_{k}} + \norm{\partial_w^m\mathcal{R}^j_{k,\eps}(v,w)}_{L^2_w H^1_{k,v}}\lesssim_{j,m} \norm{h}_{H^{j+1+m}_k}
			\end{equation}
  such that for $0 \le j \le N - 2$,
			\begin{equation}\label{comp 415}
				\partial_v \mathcal{R}^j_{k,\eps}(v,w) = A_k^{j+1}(v+w)\log(v+i\eps) + \mathcal{R}^{j+1}_{k,\eps}(v,w)
			\end{equation}
			
\end{lemma}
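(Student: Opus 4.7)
The plan is to proceed by induction on $j$, taking the explicit computation leading to \eqref{comp 413} as the base case $j=0$ (with $A_k^0 \equiv 0$). At each stage I will exhibit $\mathcal{R}^j_{k,\eps}(v,w)$ as a finite sum of integrals of the schematic form
\[
C(v+w)\int_\R k^{\alpha}\bigl(\sign(v-v')\bigr)^{\sigma}\, e^{-k|\ib(v+w)-\ib(v'+w)|}\, f(v+w,v'+w)\,\log(v'+i\eps)\,\partial_{v'}^p h(v'+w)\,dv',
\]
where $\sigma\in\{0,1\}$, $\alpha,p\ge 0$, $p\le j$, $C$ is a finite product of $\Psi_k(v+w)$ and its $v$-derivatives, and $f$ is a smooth function built from $B^{\pm 1}(v+w)$, $B^{\pm 1}(v'+w)$ and their derivatives. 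Inspection of \eqref{comp 413} shows $\mathcal{R}^1$ lies in this class.

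For the inductive step I apply $\partial_v$ to such a term. The only mechanism producing a new $\log(v+i\eps)$ singularity is the identity $\partial_v \sign(v-v')=2\delta(v-v')$, which evaluates the integrand at $v'=v$ and contributes a term proportional to $\widetilde{A}(v+w)\log(v+i\eps)$; the sum of all such contributions defines $A_k^{j+1}(v+w)$. The remaining $v$-derivatives either hit $e^{-k|\cdot|}$ (flipping $\sigma$ and multiplying by $-k/B(v+w)$ after combining signs) or hit smooth coefficients in $C$, $f$, or $\Psi_k$. All stay in the schematic family but may carry $\sigma=2$, which I return to $\sigma\in\{0,1\}$ by one integration by parts in $v'$ using the primitive
\[
\log(v'+i\eps)=\partial_{v'}\bigl[(v'+i\eps)(\log(v'+i\eps)-1)\bigr].
\]
The boundary terms at $v'=\pm\infty$ vanish by the $\Psi_k$ cutoff, and the transferred $v'$-derivative either raises $p$ by one (still $\le j+1$) or falls on the smooth kernel. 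The result is $\mathcal{R}^{j+1}_{k,\eps}$ inside the schematic family.

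The Sobolev bounds \eqref{est 415} then follow at each stage by Minkowski's inequality in $v'$, the uniform $L^1_{v'}$-bound $\|e^{-k|\ib(v+w)-\ib(v'+w)|}\|_{L^1_{v'}}\lesssim k^{-1}$ (which absorbs all the $k$-factors accumulated in $\alpha$), the $L^p_{\mathrm{loc}}$-boundedness of $\log(\cdot+i\eps)$ uniformly in $\eps$, and the embeddings of Lemma~\ref{lemma 22}. Each $\partial_w$-derivative only touches the shifted arguments $v+w$ and $v'+w$ in $C$, $f$, and $h$, so it costs exactly one extra derivative of $h$, accounting for the shift to $H^{j+1+m}_k$ on the right-hand side.

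The main obstacle is the combinatorial bookkeeping: one must verify that the schematic family is genuinely stable under the composite operation ($\partial_v$ followed by an integration by parts in $v'$), and that the $\log(v+i\eps)$-singular contribution collected at each level is a function of $v+w$ alone — which is automatic because evaluation at $v'=v$ collapses every $v'+w$ into $v+w$. A secondary difficulty is the handling of the cutoff terms involving $\partial_v^a\Psi_k(v+w)$: these are supported in the thin strips $|v+w|\lesssim \delta_0/k$ or $|v+w-1|\lesssim \delta_0/k$ and do not fit the principal schematic form, so they must be tracked as separate boundary-localized summands in the inductive hypothesis, with their estimates following from the support restriction and the smoothness of $\Psi_k$.
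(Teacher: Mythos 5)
Your overall plan---inducting on $j$, keeping each remainder inside a finite schematic family, and extracting the $\log(v+i\eps)$ coefficient from $\partial_v\sign(v-v')=2\delta(v-v')$ evaluated at $v'=v$---is exactly the paper's approach, which tracks coefficients $\alpha^j_p,\beta^j_p$ with $p\in\{0,1\}$ (the $\sign(v-v')$ terms and the sign-free terms) and couples them under $\partial_v$. However, one step as you have written it is wrong and, taken literally, would break the induction. When $\partial_v$ hits $e^{-k|\ib(v+w)-\ib(v'+w)|}$ inside a term that already carries one factor of $\sign(v-v')$, the two sign factors multiply to $1$ almost everywhere; a genuine ``$\sigma=2$'' case never arises and no further integration by parts is needed. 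The integration by parts you propose, with the primitive $L(v')=(v'+i\eps)(\log(v'+i\eps)-1)$, would replace the $\log(v'+i\eps)$ kernel in the resulting integrand by $L(v')$, which lies \emph{outside} your schematic family (that family only allows the $\log(v'+i\eps)$ kernel). So the stability-of-family claim on which the whole induction rests would fail as stated. The correct move is simply to use $\sign^2=1$ and drop the IBP, as the paper does implicitly.

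Once that IBP is dropped, $\partial_v$ never falls on $h(v'+w)$, so the derivative order $p$ on $h$ is fixed at $p\le 1$ for the entire iteration (it only reaches $1$ in the base case, from the single integration by parts that converts $(v'+i\eps)^{-1}$ into $\partial_{v'}\log(v'+i\eps)$). Your bound $p\le j+1$ is therefore an overshoot, though not fatal to the estimate. Finally, a small imprecision in the estimate sketch: the $L^1_{v'}$ bound of the exponential kernel supplies only one factor of $k^{-1}$, while the accumulated coefficient growth is as large as $k^{j+m}$; the rest is absorbed not by the kernel but by the weighted norm $\norm{h}_{H^{j+1+m}_k}$, which trades powers of $k$ for derivatives of $h$. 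The paper makes this explicit via the uniform coefficient bounds $\norm{\partial_v^m\partial_{v'}^n\alpha^j_p}_{L^\infty}\lesssim k^{j+m-p}$ together with Cauchy--Schwarz, and you should quantify this in the same way.
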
	
\begin{proof}
We claim that for $j \ge 1$,
there exist smooth functions $\alpha^j_p(v, v', k)$ and $\beta^j_p(v, v', k)$ ($p \in \{0, 1\}$) such that 
for any integers $m, n \ge 0$
\begin{equation}\label{comp 418}
    \norm{\partial_v^m\partial_{v'}^n \alpha_p^j(v,v', k)}_{L^\infty(\R \times \R)} + \norm{\partial_v^m\partial_{v'}^n \beta_p^j(v, v', k)}_{L^\infty(\R \times \R)} \lesssim k^{j + m  - p}, 
\end{equation}
and we can define $\mathcal{R}^j_{k,\eps}(v, w)$ as
\begin{equation}\label{comp 417}
\begin{split}
    \mathcal{R}^j_{k,\eps}(v,w) &= \int_\R \bigg\{e^{-k|\ib(v+w) - \ib(v'+w)|}\log(v'+i\eps) \sign(v - v') \\
    &\qquad \times \sum_{p=0}^1\big[\alpha^j_p(v+w, v'+w, k)\partial^p_{v'}h(v'+w)\big]\bigg\} dv' \\
   &\quad + \int_\R \bigg\{e^{-k|\ib(v+w) - \ib(v'+w)|}\log(v'+i\eps) \\
    &\qquad \times \sum_{p=0}^1\big[\beta^j_p(v+w, v'+w, k)\partial^p_{v'}h(v'+w)\big]\bigg\} dv'.
\end{split}
\end{equation}
We see from (\ref{computation1}) to (\ref{comp 413}) that we can define $\mathcal{R}^1_{k,\eps}(v,w)$ by setting
\begin{align}
    \alpha^1_0(v, v', k) &= -\frac{\partial_v\Psi_k(v)}{B(v')}, \quad \alpha^1_1(v, v', k) = \frac{\Psi_k(v)}{B(v)} \\
    \beta^{1}_0(v, v', k) &= -\frac{k\Psi_k(v)}{B(v)B(v')}, \quad \beta^{1}_1(v, v', k) = -\frac{\partial_v\Psi_k(v)}{k},
\end{align}
and (\ref{comp: 411}) holds with 
\begin{equation}\label{comp 420}
    A^1_k(v) = \frac{2\Psi_k(v)h(v)}{B(v)}.
\end{equation}
Noting that $\exp{-k|\ib(v+w) - \ib(v'+w)|}$ is smooth in $w$, the estimate \ref{est 415} follows from (\ref{comp 417}), (\ref{comp 420}) and the Cauchy-Schwarz inequality.

Assume for some $j_0 \ge 1$ we have constructed $\mathcal{R}^{j_0}_{k,\eps}(v,w)$. We have 
\begin{equation}\label{comp 419}
\begin{split}
    &\partial_v \mathcal{R}^{j_0}_{k,\eps}(v,w) \\
    = &~2\log(v+i\eps)\sum_{p=0}^1\big[\alpha^j_p(v+w, v+w, k) \partial^p_{v}h(v+w)\big] \\ 
    &+ \int_\R e^{-k|\ib(v+w) - \ib(v'+w)|}\log(v'+i\eps) \sign(v - v')S(v, v', w)dv'\\
   &+ \int_\R e^{-k|\ib(v+w) - \ib(v'+w)|}\log(v'+i\eps)T(v, v', w) dv' \\
   := &~A^{j_0 + 1}_k(v+w)\log(v+i\eps) + R^{j_0 + 1}_{k,\eps}(v, w),
\end{split}
\end{equation}
where
\begin{equation}
\begin{split}
        S(v, v', w) = &\sum_{p=0}^1\big[\partial_v\alpha^j_p(v+w, v'+w, k)\partial^p_{v'}h(v'+w)\big] \\
   &- \frac{k}{B(v+w)}\sum_{p=0}^1\big[\beta^j_p(v+w, v'+w, k)\partial^p_{v'}h(v'+w)\big]
\end{split}
\end{equation}
and
\begin{equation}
\begin{split}
      T(v, v', w) = &\sum_{p=0}^1\big[\partial_v\beta^j_p(v+w, v'+w, k)\partial^p_{v'}h(v'+w)\big] \\
   &- \frac{k}{B(v+w)}\sum_{p=0}^1\big[\alpha^j_p(v+w, v'+w, k) \partial^p_{v'}h(v'+w)\big].
\end{split}
\end{equation}
Hence we can define $\mathcal{R}^{j_0 + 1}_{k,\eps}(v,w)$ by setting for $p \in \{0, 1\}$
\begin{align}
    \alpha^{j_0 + 1}_p(v, v', k) &= \partial_v\alpha^{j_0}_p(v+w, v'+w, k) - \frac{k}{B(v+w)}\beta^{j_0}_p(v+w,v'+w, k), \label{comp 424}\\ 
    \beta^{j_0 + 1}_p(v, v', k) &= \partial_v\beta^{j_0}_p(v+w, v'+w, k) - \frac{k}{B(v+w)}\alpha^{j_0}_p(v+w,v'+w, k),\label{comp 425}
\end{align}
We define
\begin{equation}
    A^{j_0 + 1}_k(v) = 2\sum_{p=0}^1\big[\alpha^{j_0}_p(v,v, k)\partial^p_{v}h(v)\big].
\end{equation}
Hence (\ref{comp 415}) holds for $j = j_0$.
For $0 \le m \le N - j_0 - 2$, it follows that
\begin{equation}
    \norm{\partial_v^mA^{j_0 + 1}_k(v)}_{L^2_w H^1_{k,v}} \lesssim \sum_{q=0}^m\sum_{p = 0}^1 \binom{m}{q} \norm{\partial_v^q\alpha_p^{j_0}(v,v,k)\partial_v^ph(v)}_{H^1_k}.
\end{equation}
Since (\ref{comp 418}) holds for the case $j = j_0$, we have
\begin{equation}
    \norm{\partial_v^mA^{j_0 + 1}_k(v)}_{L^2_w H^1_{k,v}} \lesssim_m \sum_{q=0}^m\sum_{p = 0}^1 k^{j_0+q-p}\norm{h}_{H^{1+p}_k} \lesssim_m \norm{h}_{H^{j_0 + 1+m}_k}.
\end{equation}
The estimate of $\mathcal{R}^{j_0 + 1}_{k,\eps}(v, w)$ follows from (\ref{comp 418}) ,(\ref{comp 417}), (\ref{comp 424}), (\ref{comp 425}) and the Cauchy-Schwarz inequality. 
\end{proof}

The free component $\G^{fr}_k$ of the Green function generates singularities in $v$ when $v$ is close to 0. As a contrast, the boundary components $\G^b_k$ generate singularities in $w$ when $w$ is close to the boundary $b(0)$ or $b(1)$. We have the following lemma establishing the boundary singularity structure near $b(0)$. The proof is similar to that of Lemma \ref{lemma 41 fr}.

\begin{lemma}\label{lemma 42 b0}
        Let $N \ge 1$, $k \ge 1$, $0 < \eps < 1/2$. For any $h \in H^N_k(\R)$, define
    \begin{equation}
        \mathcal{S}^0_{k,\eps}(v,w) = \int_\R \G^{b0}(v+w, v'+w) \frac{h(v'+w)}{v'+i\eps}dv'.
    \end{equation}
    Then for $1 \le j \le N - 1$ there exist functions $B_k^{j}(v)$ and $\mathcal{S}^{j}_{k,\eps}(v,w)$ satisfying the following bounds (setting $B_k^0(v, w):= 0$)
    \begin{equation}
				\norm{\partial_v^mB^j_{k,\eps}(v)}_{H^1_{k}} + \norm{\partial_v^m\mathcal{S}^j_{k,\eps}(v,w)}_{L^2_w H^1_{k,v}}\lesssim_{m,j} \norm{h}_{H^{j+1+m}_k}
			\end{equation}
    such that for $0 \le j \le N - 2$,
			\begin{equation}
				\partial_w \mathcal{S}^j_{k,\eps}(v,w) = B_k^{j+1}(v+w)\log(b(0) - w +i\eps) + \mathcal{S}^{j+1}_{k,\eps}(v,w).
			\end{equation}
			
\end{lemma}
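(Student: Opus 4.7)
The plan is to mirror the inductive strategy of Lemma \ref{lemma 41 fr}, with one crucial modification: the singularity now appears in the $w$ variable near $b(0)$ rather than in the $v$ variable near zero, and it is generated by the corner of $e^{-k|\ib(v'+w)|}$ at $v'=b(0)-w$, rather than by the $\sign(v-v')$ produced from $|\ib(v+w)-\ib(v'+w)|$ in the free part.

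For the base case $j=0$, I first factor $\G_k^{b_0}(v+w, v'+w) = k^{-1}\Phi^{b0}_k(v+w)\, e^{-k|\ib(v'+w)|}$, so that $\mathcal{S}^0_{k,\eps}(v,w) = k^{-1}\Phi^{b0}_k(v+w)\,J_0(w)$ with
\[
J_0(w) := \int_\R e^{-k|\ib(v'+w)|}\,\frac{h(v'+w)}{v'+i\eps}\,dv'.
\]
Using $\partial_w = \partial_{v'}$ on functions of $v'+w$ together with $1/(v'+i\eps) = \partial_{v'}\log(v'+i\eps)$, two integrations by parts give
\[
\partial_w J_0(w) = -\int_\R \partial_{v'}^2\bigl[e^{-k|\ib(v'+w)|}h(v'+w)\bigr]\,\log(v'+i\eps)\,dv'.
\]
Since $\partial_{v'}e^{-k|\ib(v'+w)|} = -\frac{k\,\sign(v'+w-b(0))}{B(v'+w)}e^{-k|\ib(v'+w)|}$ has a jump at $v'=b(0)-w$, a further $v'$-derivative produces a Dirac mass there of strength $-2k/B(b(0))$, which the $\log$ factor evaluates to $\log(b(0)-w+i\eps)$. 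This extracts the leading term $B_k^1(v+w)\log(b(0)-w+i\eps)$ with $B_k^1(v+w)=2h(b(0))\Phi^{b0}_k(v+w)/B(b(0))$, leaving a remainder $\mathcal{S}^1_{k,\eps}$ of the form $k^{-1}\Phi^{b0}_k(v+w)$ times an integral of smooth kernels, some carrying $\sign(v'+w-b(0))$, against $\log(v'+i\eps)$.

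For the induction, I maintain an ansatz for $\mathcal{S}^j_{k,\eps}$ analogous to the $(\alpha_p^j,\beta_p^j)$ bookkeeping in Lemma \ref{lemma 41 fr}: a sum of integrals of the form
\[
\int_\R e^{-k|\ib(v'+w)|}\,\log(v'+i\eps)\,\sign(v'+w-b(0))\,\widetilde\alpha_p^j(v+w,v'+w,k)\,\partial_{v'}^p h(v'+w)\,dv',
\]
plus an analogous sum without the sign factor, all multiplied by $\Phi^{b0}_k(v+w)$, with smooth kernels $\widetilde\alpha_p^j,\widetilde\beta_p^j$ obeying the same type of pointwise bounds as in (\ref{comp 418}). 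Differentiating in $w$, the only source of $\log(b(0)-w+i\eps)$ is $\partial_w\sign(v'+w-b(0))=2\delta(v'+w-b(0))$, which collapses the integral at $v'=b(0)-w$ and defines $B_k^{j+1}(v+w)$; the remaining terms propagate the ansatz, defining $\mathcal{S}^{j+1}_{k,\eps}$. The Sobolev estimates then follow from Cauchy-Schwarz, the uniform local $L^2$ bound on $\log(\cdot+i\eps)$, and Lemma \ref{lemma 22} to handle pointwise evaluations of $h$ and its derivatives at $b(0)$.

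The main technical point is arranging the inductive ansatz so that the coefficient $B_k^{j+1}$ of the $\log$ depends only on $v+w$. This is precisely what singles out the $\G^{b_0}_k$ decomposition: apart from the scalar prefactor $\Phi^{b0}_k(v+w)$, the integrand depends on $(v',w)$ only through the combination $v'+w$, so the delta $\delta(v'+w-b(0))$ forces $v'+w=b(0)$, killing all residual $w$-dependence and leaving a function of $v+w$ alone.
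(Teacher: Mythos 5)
Your argument is correct and follows precisely the route the paper indicates — adapting the inductive ansatz of Lemma \ref{lemma 41 fr} — with the essential modification correctly identified: the $\log$ in $w$ arises from $\partial_w\sign(v'+w-b(0)) = 2\delta(v'+w-b(0))$, and the product structure $\G^{b0}_k = k^{-1}\Phi^{b0}_k(v+w)e^{-k|\ib(v'+w)|}$ ensures the coefficient $B_k^{j+1}$ depends only on $v+w$ once the delta forces $v'+w=b(0)$. This matches, for instance, the paper's explicit treatment of $T_{32}$ in the proof of Theorem \ref{thm 49}, so your write-up supplies the details the paper deliberately omits.
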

\begin{proof}
    The proof applies similar arguments to the proof of Lemma \ref{lemma 41 fr}. We thus omit the details.
\end{proof}

For the component $\G^{b1}$, it does not generate singularities if the variable $w$ is supported away from the boundary $b(1)$. We have the following lemma.
\begin{lemma}\label{lemma 43 b1}
        Let $N \ge 1$, $k \ge 1$, $0 < \eps < 1/2$. Assume for some $\delta_0 > 0$, $\varphi_{\delta_0}(w)$ is a smooth function such that $\varphi_{\delta_0}(w) = 0$ for $w \in (b(1 - \delta_0), b(1+\delta_0))$. For any $h \in H^N_k(\R)$, define
    \begin{equation}
        \mathcal{T}_{k,\eps}(v,w) = \int_\R \G^{b1}(v+w, v'+w) \frac{h(v'+w)\varphi_{\delta_0}(w)}{v'+i\eps}dv'.
    \end{equation}
    Then for integers $m, n\ge 0$ and $m + n \le N-1$, we have
    \begin{equation}\label{est 433}
        \norm{\partial_v^m\partial_w^n \mathcal{T}_{k,\eps}(v,w) }_{L^2_w H^1_{k,v}} \lesssim_{m,n}\norm{h}_{H^{1 + m + n}_k}.
    \end{equation}
\end{lemma}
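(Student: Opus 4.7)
My plan is to exploit the structural fact that the support hypothesis on $\varphi_{\delta_0}$ keeps $w$ bounded away from $b(1)$, while the kernel $\G_k^{b1}(v+w, v'+w) = \Phi^{b1}_k(v+w)\,e^{-k|\ib(v'+w)-1|}/k$ is concentrated near $v' + w = b(1)$. Together these keep $|v'|$ bounded below on the effective support of the integrand, so the denominator $v' + i\eps$ is never small there, and no logarithmic singularity (as arose in Lemmas \ref{lemma 41 fr} and \ref{lemma 42 b0}) appears. This makes the present lemma strictly easier than its predecessors.

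Concretely, on $\supp \varphi_{\delta_0}$ one has $|w - b(1)| \gtrsim \delta_0$, and I would split the $v'$-integration at $|v'| = c\delta_0$ for a small constant $c>0$. On $|v'|\le c\delta_0$, the triangle inequality gives $|\ib(v'+w) - 1| \gtrsim \delta_0$, so $e^{-k|\ib(v'+w)-1|} \lesssim e^{-c'k\delta_0}$ is exponentially small in $k$ and absorbs any polynomial loss from derivatives. On $|v'| \ge c\delta_0$, we have $|v'+i\eps|^{-1}\lesssim (c\delta_0)^{-1}$ uniformly in $\eps$, so the integrand is perfectly smooth.

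I would then apply $\partial_v^m\partial_w^n$ directly under the integral. Note that $\mathcal{T}_{k,\eps}$ depends on $v$ only through $\Phi^{b1}_k(v+w)$, so $\partial_v$ acts only on that factor; in particular no derivative ever falls on $(v'+i\eps)^{-1}$. Each derivative in $v$ or $w$ landing on $\Phi^{b1}_k(v+w)$ or on the exponential produces at most a factor of $k$ pointwise (using $|\sinh(k\ib)/\sinh k| \lesssim 1$ on $\supp \Psi_k$), while derivatives on $h(v'+w)$ or $\varphi_{\delta_0}(w)$ are absorbed directly. A Cauchy--Schwarz in $v'$ with $\|e^{-k|\ib(\cdot+w)-1|}\|_{L^2_{v'}} \lesssim k^{-1/2}$ together with the uniform bound on $(v'+i\eps)^{-1}$ on the relevant region yields the pointwise estimate
\begin{equation*}
|\partial_v^m\partial_w^n\mathcal{T}_{k,\eps}(v,w)| \lesssim_{\delta_0,m,n} |\Phi^{b1}_k(v+w)| \cdot k^{m+n-1/2}\,\|h\|_{H^{m+n}_k}.
\end{equation*}
Integrating in $w$ over the compact support (in $v+w$) of $\Phi^{b1}_k(v+w)$ and including the $H^1_{k,v}$ weight (worth one further factor of $k$) then gives \eqref{est 433}.

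The main technical obstacle is the combinatorial bookkeeping of $k$-weights: each of the $m+n$ derivatives should produce at most one factor of $k$, which together with the $H^1_{k,v}$ weight on the left combines to $k^{1+m+n}$, exactly matching the $H^{1+m+n}_k$ weight on $h$ on the right-hand side. Once the support argument rules out any singular contribution from $v'$ near $0$, the remainder of the proof is a routine Leibniz-plus-Cauchy--Schwarz computation.
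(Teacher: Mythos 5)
The key step in your argument is the claim that on $|v'|\le c\delta_0$ the exponential factor $e^{-k|\ib(v'+w)-1|}\lesssim e^{-c'k\delta_0}$ ``absorbs'' the integral, so that the singularity of $(v'+i\eps)^{-1}$ at $v'=0$ never has to be faced. This does not work, because the smallness is in $k$, not in $\eps$: the required bound must be uniform for $\eps\in(0,1/2)$, and for any fixed $k$ (in particular $k=1$) the factor $e^{-c'k\delta_0}$ is just a constant. Your Cauchy--Schwarz then gives, on the near-zero region,
\begin{equation*}
\Bigl\|\,e^{-k|\ib(\cdot+w)-1|}(\,\cdot\,+i\eps)^{-1}\Bigr\|_{L^2(|v'|\le c\delta_0)}\;\lesssim\; e^{-c'k\delta_0}\,\Bigl(\int_{|v'|\le c\delta_0}\frac{dv'}{(v')^2+\eps^2}\Bigr)^{1/2}\;\sim\; e^{-c'k\delta_0}\,\eps^{-1/2},
\end{equation*}
which blows up as $\eps\to0+$. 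The cutoff $\varphi_{\delta_0}$ pushes $w$ away from $b(1)$, but it does nothing to push $v'$ away from $0$, and your splitting never uses the actual regularity of $h$ near $v'=0$, which is the mechanism that makes the principal value finite.

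What is missing is the integration by parts $(v'+i\eps)^{-1}=\partial_{v'}\log(v'+i\eps)$, exactly as in Lemma \ref{lemma 41 fr}. After that, $\log(v'+i\eps)$ is in $L^2_{\mathrm{loc}}$ with an $\eps$-uniform bound, so the bulk term is estimated by Cauchy--Schwarz against one derivative of $h$ (this is where $\|h\|_{H^{1+m+n}_k}$ rather than $\|h\|_{H^{m+n}_k}$ enters), and the boundary term coming from $\partial_{v'}\sign(v'+w-b(1))=2\delta(v'+w-b(1))$ produces $\log(b(1)-w+i\eps)$. The role of the support hypothesis on $\varphi_{\delta_0}$ is precisely to make this last factor smooth and bounded --- it is not to kill the integral near $v'=0$. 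In other words, the observation you make (that $w$ is far from $b(1)$) is the right geometric fact, but it enters the proof as the reason the IBP boundary log is harmless, not as a substitute for performing the IBP. This is what the paper does: it invokes the Lemma \ref{lemma 42 b0}--type singularity structure for the uncut integral (yielding $B^{j}_k(v+w)\log(b(1)-w+i\eps)$ plus a tame remainder) and then notes that multiplying by $\varphi_{\delta_0}$ defuses the log. Your plan should be amended to include that IBP step; the rest of your $k$-weight bookkeeping is fine.
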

\begin{proof}
    Define $$\mathcal{T}^0_{k,\eps}(v,w) = \int_\R \G^{b1}(v+w, v'+w) \frac{h(v'+w)}{v'+i\eps}dv'.$$
    By symmetry, a similar conclusion to Lemma \ref{lemma 42 b0} holds for $\mathcal{T}^0_{k,\eps}(v,w)$. More specifically, for $1 \le j \le N - 1$ there exist functions $B_k^{j}(v)$ and $\mathcal{T}^{j}_{k,\eps}(v,w)$, such that for $0 \le j \le N - 2$,
			\begin{equation}
				\partial_w \mathcal{T}^j_{k,\eps}(v,w) = B_k^{j+1}(v+w)\log(b(1) - w +i\eps) + \mathcal{T}^{j+1}_{k,\eps}(v,w).
			\end{equation}
		In addition, we have for $0 \le j \le N-1$ (setting $B_k^0(v, w):= 0$) and any $0 \le m \le N - j -1$,
			\begin{equation}
				\norm{\partial_v^mB^j_{k,\eps}(v)}_{H^1_{k}} + \norm{\partial_v^m\mathcal{T}^j_{k,\eps}(v,w)}_{L^2_w H^1_{k,v}}\lesssim_{m,j} \norm{h}_{H^{j+1+m}_k}.
			\end{equation}
   However, $\log(b(1) - w + i\eps)$ is smooth with bounded derivatives when $w$ is restricted on the support of $\varphi_{\delta_0}(w)$. Therefore, $\mathcal{T}_{k,\eps}(v,w) = \mathcal{T}^0_{k,\eps}(v,w)\varphi_{\delta_0}(w)$ has the estimate (\ref{est 433})
\end{proof}

\subsection{Singularity structures of the spectrum density function}\label{secSSSDF}
In order to study the singularity of $\Theta_{k, \eps}^\iota(v, w)$ more clearly, we split the domain of $w$ into three parts. Fix $0 < \delta_0 < 1/10$. There exist three non-negative smooth functions $\Upsilon_1(w)$, $\Upsilon_2(w)$ and $\Upsilon_3(w)$ such that
\begin{equation} \label{cutoff}
	\begin{split}
		&\Upsilon_1(w) = 1 \quad \text{on} \quad [b(-\delta_0), b(\delta_0)], \quad \quad \Upsilon_2(w) = 1 \quad \text{on} \quad [b(2\delta_0), b(1-2\delta_0)], \\ &\Upsilon_3(w) = 1 \quad \text{on} \quad [b(1-\delta_0), b(1+\delta_0)]
	\end{split}
\end{equation}
and
\begin{equation}
	\Upsilon_1(w) + \Upsilon_2(w) + \Upsilon_3(w) = 1 \quad \text{on}\quad \R.
\end{equation}
For $v, w \in \R$ and $j \in \{1, 2, 3\}$, set
\begin{equation} \label{eq 43}
	\Theta_{k, \eps}^{j, \iota}(v, w) := \Theta_{k, \eps}^\iota(v, w)\Upsilon_j(w).
\end{equation}

In order to save notation, we define a function $\tau(m)$ for an integer $m \ge 0$ as
\begin{equation}
\tau(m) = 
    \begin{cases}
        &1, \quad \text{if}\quad m = 0 \\
        &2, \quad \text{if}\quad m \ge 1.
    \end{cases}
\end{equation}

Before presenting the results on regularity structures of $\Theta^\iota_{k,\eps}(v,w)$, we first introduce the following definition of singularity structure when $w$ is close to the boundary $b(0)$. 

\begin{definition}\label{def b0}
    Assume $k \ge 1$, $m \ge 0$, $n \ge 0$, $N \ge 1$ and $1 \le \alpha \le N$ are five integers. For any $\eps \in (-1, 1)\setminus\{0\}$, $g \in H^N_k(\R)$, a function $\Gamma(v, w)$, $v, w \in \R$, is said to have the $\mathcal{F}^{b0}_{m, n, k, \eps}(g, \alpha, N)$ type singularity structure if there exists a constant $C$ which depends only on $\alpha$ and $N$ such that
    \begin{enumerate}
        \item $\Gamma(v, w)$ has the following estimate
        \begin{equation}
            \norm{\Gamma(v, w)}_{L^2_w H^1_{k,v}(\R^2)} \le C \norm{g}_{H^\alpha_k(\R)}.
        \end{equation}
        \item If $\alpha < N$, then for $1 \le p \le \tau(m)$ and $1 \le q \le \tau(n)$ such that there exist functions $A_p(v, w)$, $B_q(v, w)$, $R(v, w)$ and $S(v, w)$ such that
        \begin{equation}
        \begin{split}
            &\partial_v \Gamma(v, w) = \sum_{p = 1}^{\tau(m)}A_p(v, w)\log^p(v+i\eps) + R(v, w), \\
            &\partial_w \Gamma(v, w) = \sum_{q = 1}^{\tau(n)}B_q(v, w)\log^q(b(0) - w + i\eps) + S(v, w).
        \end{split}
        \end{equation}
        Furthermore, for $1 \le p \le \tau(m)$, $A_p(v, w)$ and $R(v, w)$ have the $\mathcal{F}^{b0}_{m+1, n, k,\eps}(g, \alpha + 1, N)$ type singularity structure.
        For $1 \le q \le \tau(n)$, $B_q(v, w)$ and $S(v, w)$ have the $\mathcal{F}^{b0}_{m, n+1, k,\eps}(g, \alpha + 1, N)$ type singularity structure.
    \end{enumerate}
\end{definition}

We also define the interior singularity structure when $w$ is away from the boundary points $b(0)$ and $b(1)$.
\begin{definition}
    Assume $k \ge 1$, $m \ge 0$, $N \ge 1$ and $1 \le \alpha \le N$ are four integers. For any $\eps \in (-1,1)\setminus\{0\}$, $g \in H^N_k(\R)$, a function $\Gamma(v, w)$, $v, w \in \R$, is said to have the $\mathcal{F}^{in}_{m, k,\eps}(g, \alpha, N)$ type singularity structure if there exists a constant $C$ which depends only on $\alpha$ and $N$ such that
    \begin{enumerate}
        \item $\Gamma(v, w)$ has the following estimate
        \begin{equation}
            \norm{\Gamma(v, w)}_{L^2_w H^1_{k,v}(\R^2)} \le C \norm{g}_{H^\alpha_k(\R)}.
        \end{equation}
        \item If $\alpha < N$, for $1 \le p \le \tau(m)$ there exist functions $A_p(v, w)$ and $R(v, w)$ such that
        \begin{equation}
        \begin{split}
            &\partial_v \Gamma(v, w) = \sum_{p = 1}^{\tau(m)}A_p(v, w)\log^p(v+i\eps) + R(v, w).
        \end{split}
        \end{equation}
        In addition, $A_p(v, w)$ and $R(v, w)$ have the $\mathcal{F}^{in}_{m+1, k,\eps}(g, \alpha + 1, N)$ type singularity structure.
        
         \item For any integer $0 \le q \le N - \alpha$, $\partial_w^q \Gamma(v, w)$ has the $\mathcal{F}^{in}_{m, k,\eps}(g, \alpha + q, N)$ type singularity structure.
    \end{enumerate}
\end{definition}

Using Definition \ref{def b0}, we see that the right-hand side of equation (\ref{mainequation1}) has the $\mathcal{F}^{b0}_{0, 0, k,\eps }(g, 1, N)$ type singularity structure assuming that $f_0^k \in H^N_k(\R)$. We have the following proposition.
\begin{proposition}\label{structure of RHS}
    Assume $g(v) \in H^N_k(\R)$ for some integer $N \ge 1$. Then 
    \begin{equation}
       \Gamma(v, w) := \int_\R\G_k(v+w, v'+w)\frac{g(v'+w)\Upsilon_1(w)}{v'+i\epsilon}dv'
   \end{equation}    
   has the $\mathcal{F}^{b0}_{0, 0, k,\eps }(g, 1, N)$ type singularity structure.
\end{proposition}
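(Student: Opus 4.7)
The plan is to decompose the Green's function as $\G_k = \G^{fr}_k + \G^{b_0}_k + \G^{b_1}_k$ per (\ref{free part})--(\ref{eq 222}), inducing the splitting
\begin{equation*}
\Gamma(v,w) = \Gamma^{fr}(v,w) + \Gamma^{b_0}(v,w) + \Gamma^{b_1}(v,w),
\end{equation*}
where, in the notation of Lemmas \ref{lemma 41 fr} and \ref{lemma 42 b0}, $\Gamma^{fr}(v,w) = \Upsilon_1(w)\,\mathcal{R}^0_{k,\eps}(v,w)$ and $\Gamma^{b_0}(v,w) = \Upsilon_1(w)\,\mathcal{S}^0_{k,\eps}(v,w)$ (both with $h = g$), while $\Gamma^{b_1}$ matches the setup of Lemma \ref{lemma 43 b1} upon taking $\varphi_{\delta_0} = \Upsilon_1$, which is legitimate because $\Upsilon_1$ is supported near $b(0)$ and therefore vanishes on a neighborhood of $b(1)$. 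I will verify the $\mathcal{F}^{b0}_{0,0,k,\eps}(g,1,N)$-type structure on each summand separately and add.

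The base $L^2_w H^1_{k,v}$ bound with constant $\|g\|_{H^1_k}$ on each summand is exactly the $j=0$ case of the corresponding lemma, combined with the boundedness of $\Upsilon_1$. For the first recursive step, Lemma \ref{lemma 41 fr} (with $j=0$) gives
\begin{equation*}
\partial_v \Gamma^{fr}(v,w) = \Upsilon_1(w)\,A^1_k(v+w)\log(v+i\eps) + \Upsilon_1(w)\,\mathcal{R}^1_{k,\eps}(v,w),
\end{equation*}
matching the $p=1$ summand in Definition \ref{def b0} (the $p=2$ term is empty since $\tau(0) = 1$), while $\partial_w \Gamma^{fr}$ distributes onto the smooth $w$-dependence of $\Upsilon_1$, $\G^{fr}_k(v+w,v'+w)$, and $g(v'+w)$, producing an expression of the same three-piece form with one extra derivative absorbed into the input. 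Symmetrically, Lemma \ref{lemma 42 b0} yields a decomposition of $\partial_w \Gamma^{b_0}$ with the $\log(b(0)-w+i\eps)$ factor (matching the $q=1$ term), whereas $\partial_v \Gamma^{b_0}$ falls only on the factorized $v$-dependence in (\ref{eq 222}) through $\Phi^{b0}_k(v+w)$ and generates no $v$-logarithm. Finally, Lemma \ref{lemma 43 b1} gives that $\Gamma^{b_1}$ and all its mixed $v$- and $w$-derivatives up to total order $N-1$ lie in $L^2_w H^1_{k,v}$ with norms controlled by $\|g\|_{H^{1+m+n}_k}$, so $\Gamma^{b_1}$ contributes only to the remainder terms $R$ and $S$ at every level of the recursion.

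The full $\mathcal{F}^{b0}_{0,0,k,\eps}(g,1,N)$ structure then follows by induction on the recursion depth $\alpha$: at each step the newly produced coefficients $A_p$, $B_q$ and remainders $R$, $S$ are themselves sums of exactly the same three types (free, boundary-$0$, boundary-$1$), so the argument repeats with $\alpha$ incremented by one, until $\alpha = N$ is reached. The main subtlety I anticipate is the mixed-derivative bookkeeping at deeper levels: when a further $\partial_v$ is applied to a piece that already carries a $\log(v+i\eps)$ factor, one must check that the resulting contribution is at most a $\log^2(v+i\eps)$ piece (consistent with the $\tau(m) = 2$ rule once $m \ge 1$) rather than a genuine pole-type $1/(v+i\eps)$ singularity. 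This is handled by never differentiating the extracted $\log(v+i\eps)$ factor pointwise, but instead keeping the new coefficient in its integral representation as in Lemmas \ref{lemma 41 fr} and \ref{lemma 42 b0}, so that each additional $v$- or $w$-differentiation generates one more $\log$ factor through the internal integration-by-parts and cleanly splits into a higher-level $A_p$ and a remainder; the analogous bookkeeping in $w$ for $\Gamma^{b_0}$ is identical.
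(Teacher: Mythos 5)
Your proposal is correct and takes essentially the same route as the paper: the paper's entire proof is the one-sentence remark that the result "follows from Lemma \ref{lemma 41 fr}, Lemma \ref{lemma 42 b0} and Lemma \ref{lemma 43 b1}," and your argument is precisely the intended unpacking of that remark, namely splitting $\G_k = \G^{fr}_k + \G^{b_0}_k + \G^{b_1}_k$, identifying each summand with the $\mathcal{R}^0$, $\mathcal{S}^0$, $\mathcal{T}$ objects of those lemmas (with $\varphi_{\delta_0}=\Upsilon_1$, valid since $\Upsilon_1$ vanishes near $b(1)$), and iterating the recursive differentiation structure. One small inaccuracy in your narration: the coefficients $A^j_k$ produced by Lemma \ref{lemma 41 fr} are functions of $v+w$ alone rather than being themselves sums of "the same three types" of integrals, but this is a degenerate case whose derivatives trivially satisfy the recursive clauses of Definition \ref{def b0}, so the overall argument is unaffected.
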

\begin{proof}
The proof follows from Lemma \ref{lemma 41 fr}, Lemma \ref{lemma 42 b0} and Lemma \ref{lemma 43 b1}.
\end{proof}

We also need the following technical lemmas regarding the singularity structures defined in Definition \ref{def b0}.
\begin{lemma}\label{aux lemma 2}
    Assume $g \in H^{N+j}_k(\R)$ for some $N \ge 1$ and $0 \le j \le N - 1$. For any $\Gamma(v, w) \in \mathcal{F}^{b0}_{0, 0, k,\eps}(g, N - j, N)$, we have $\Gamma(v, w) \in \mathcal{F}^{b0}_{0, 0, k,\eps}(g, N, N + j)$.
\end{lemma}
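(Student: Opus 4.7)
The plan is to prove a strengthened statement by induction on the recursion depth $d := N - \alpha$, namely: for every integer $d\ge 0$, every $m,n\ge 0$ and every $j\ge 0$, if $\Gamma(v,w)\in\mathcal{F}^{b0}_{m,n,k,\eps}(g,\alpha,N)$ with $N-\alpha=d$ and $g\in H^{N+j}_k(\R)$, then $\Gamma(v,w)\in\mathcal{F}^{b0}_{m,n,k,\eps}(g,\alpha+j,N+j)$. Taking $m=n=0$ and $\alpha=N-j$ recovers Lemma \ref{aux lemma 2}. The crucial observation is that the shift $(\alpha,N)\mapsto(\alpha+j,N+j)$ preserves the depth, so the recursion trees prescribed by Definition \ref{def b0} have the same height on both sides; one only needs to verify that the very same witnesses $A_p,B_q,R,S$ used for the original membership also work for the shifted membership.

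For the base case $d=0$ (so $\alpha=N$), the second clause of Definition \ref{def b0} is vacuous and only the norm estimate must be checked. From the elementary monotonicity bound recorded in the introduction, $\|g\|_{H^{\alpha}_k(\R)}\lesssim k^{-j}\|g\|_{H^{\alpha+j}_k(\R)}\le\|g\|_{H^{\alpha+j}_k(\R)}$ for $k\ge 1$, so the hypothesis $\|\Gamma\|_{L^2_w H^1_{k,v}}\le C\|g\|_{H^{\alpha}_k}$ immediately upgrades (absorbing the absolute constant into $C$) to the required $\|\Gamma\|_{L^2_w H^1_{k,v}}\le C'\|g\|_{H^{\alpha+j}_k}=C'\|g\|_{H^{N+j}_k}$.

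For the inductive step at depth $d\ge 1$, assume the claim for all depths $<d$. Since $\alpha<N$, Definition \ref{def b0} furnishes functions $A_p,R$ satisfying $\partial_v\Gamma(v,w)=\sum_{p=1}^{\tau(m)}A_p(v,w)\log^p(v+i\eps)+R(v,w)$ with $A_p,R\in\mathcal{F}^{b0}_{m+1,n,k,\eps}(g,\alpha+1,N)$, and $B_q,S$ satisfying the analogous decomposition of $\partial_w\Gamma$ with $B_q,S\in\mathcal{F}^{b0}_{m,n+1,k,\eps}(g,\alpha+1,N)$. Each of these pieces sits at depth $N-(\alpha+1)=d-1$, so by the inductive hypothesis the same functions satisfy $A_p,R\in\mathcal{F}^{b0}_{m+1,n,k,\eps}(g,\alpha+1+j,N+j)$ and $B_q,S\in\mathcal{F}^{b0}_{m,n+1,k,\eps}(g,\alpha+1+j,N+j)$. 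Combining these shifted memberships with the norm estimate from the base-case argument shows that precisely the decompositions already in hand witness $\Gamma\in\mathcal{F}^{b0}_{m,n,k,\eps}(g,\alpha+j,N+j)$, completing the induction.

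I do not expect a genuine obstacle here: the definition is purely recursive and the shift $\alpha\mapsto\alpha+j$, $N\mapsto N+j$ is designed to be compatible with the recursion. The only subtlety worth flagging is bookkeeping on the constant $C$ (which depends on $\alpha,N$, and implicitly now on $j$), and the observation that the hypothesis $g\in H^{N+j}_k$ is exactly the regularity needed to feed the base-case estimate at the deepest level $\alpha+j=N+j$ of the shifted tree.
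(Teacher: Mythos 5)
Your proof is correct, and it is essentially the full version of the argument the paper compresses into a single sentence: the paper simply notes that the recursive structure of Definition \ref{def b0} together with the Sobolev inclusion $H^{N+j}_k(\R)\subset H^{N}_k(\R)$ gives the result. Your induction on depth $d=N-\alpha$ makes explicit why the shift $(\alpha,N)\mapsto(\alpha+j,N+j)$ is compatible with the recursion — same witnesses $A_p,B_q,R,S$ at each level, with the norm bound upgraded via the inclusion — and the base case at $\alpha=N$ is where the hypothesis $g\in H^{N+j}_k$ is actually consumed.
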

\begin{proof}
    The proof follows from the Definition \ref{def b0} and the fact that for $m \ge n$
    \begin{equation}
        H^{m}_k(\R) \subset  H^{n}_k(\R).
    \end{equation}
\end{proof}

Recall that for $k \ge 1$, $v \in \R$, the smooth function $\Phi^{b0}_k$ is defined as
\begin{equation}\label{phi b0}
    \Phi^{b0}_k(v) = \Psi_k(v) \frac{\sinh k(1-\ib(v))}{\sinh k}
\end{equation}
\begin{lemma}\label{aux lemma}
    Let  $\Phi^{b0}_k$ be defined as (\ref{phi b0}). For any integer $N \ge 1$ and any function $g \in H^M_k(\R)$ with $M \ge N$, assume $\alpha(v, w) \in \mathcal{F}^{b0}_{0, 0, k,\eps}(\Phi^{b0}_k, m, N)$ and $\beta(v, w) \in \mathcal{F}^{b0}_{0, 0, k,\eps}(g, n, N)$. If there exist a integer $j \ge 1$ that $N - m \ge j$, $N - n \ge j$ and $M \ge m+n+j$, then 
    \begin{equation}
        \alpha(v, w)\beta(b(0) - w, w) \in \mathcal{F}^{b0}_{0, 0, k,\eps}(g, m + n, m+n+j).
    \end{equation}
\end{lemma}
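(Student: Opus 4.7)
The plan is a double induction argument verifying both conditions in Definition \ref{def b0} for the product $\alpha(v,w)\beta(b(0)-w,w)$, with the outer induction on $j$. The key structural fact, exploited throughout, is that $\beta(b(0)-w,w)$ is independent of $v$, so $\partial_v$ acts only on $\alpha$. The base case $j=1$ requires only the $L^2_w H^1_{k,v}$ bound, since the target at the next level has the regularity index equal to the top index $m+n+1$ and condition (2) of Definition \ref{def b0} is inactive there. The inductive step $j\ge 2$ reduces to applying the lemma at parameters $(m+1,n,j-1)$ or $(m,n+1,j-1)$; these satisfy the required hypotheses $N-(m+1)\ge j-1$, $N-n\ge j-1$, $M\ge m+n+j$ inherited from the original setup.

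For the bound, observe that
\begin{equation*}
\norm{\alpha(\cdot,w)\beta(b(0)-w,w)}_{H^1_{k,v}}=|\beta(b(0)-w,w)|\,\norm{\alpha(\cdot,w)}_{H^1_{k,v}},
\end{equation*}
which reduces the task to an $L^\infty_w$ trace estimate on $\tilde\beta(w):=\beta(b(0)-w,w)$. This is obtained by combining the 1D Sobolev embedding $H^1_v\hookrightarrow L^\infty_v$ at fixed $w$ (giving $\tilde\beta\in L^2_w$) with a Morrey-type bound from $\partial_w\tilde\beta=\bigl[-\partial_v\beta+\partial_w\beta\bigr]_{v=b(0)-w}$, whose $L^2_w$-norm is controlled via the recursive splittings of $\partial_v\beta$ and $\partial_w\beta$ in Definition \ref{def b0} together with the local $L^p$-integrability of $\log(v+i\eps)$ and $\log(b(0)-w+i\eps)$. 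The factor $\norm{\alpha}_{L^2_w H^1_{k,v}}$ is bounded by an explicit $\Phi^{b0}_k$-dependent constant, and $H^{m+n}_k\hookrightarrow H^{n+1}_k$ (valid since $m\ge 1$) yields the claimed bound in terms of $\norm{g}_{H^{m+n}_k}$.

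For the recursive splittings, $\partial_v(\alpha\tilde\beta)=(\partial_v\alpha)\tilde\beta$ directly inherits the $v$-singularity structure of $\alpha$, while
\begin{equation*}
\partial_w(\alpha\tilde\beta)=(\partial_w\alpha)\tilde\beta+\alpha\bigl[-\partial_v\beta+\partial_w\beta\bigr]_{v=b(0)-w}
\end{equation*}
uses the identity $\log(v+i\eps)|_{v=b(0)-w}=\log(b(0)-w+i\eps)$ to convert the $v$-singularities of $\partial_v\beta$ into $w$-singularities of the correct $\mathcal{F}^{b0}$-type. Combined with the intrinsic $w$-splitting of $\partial_w\beta$, every resulting piece is either a product with a $\log^q(b(0)-w+i\eps)$ factor or a regular remainder, and each is placed in the target class by applying the inductive hypothesis at $(m+1,n,j-1)$ or $(m,n+1,j-1)$, or directly by the bound argument when $j=1$.

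The main obstacle is the $L^\infty_w$ trace estimate on $\tilde\beta$: the abstract $\mathcal{F}^{b0}$-class only provides $\beta\in L^2_w H^1_{k,v}$, not $L^\infty$-information along the diagonal $v=b(0)-w$, so the extra regularity must be squeezed from the recursive singularity structure by careful handling of logarithmic singularities. A secondary, mechanical point is the bookkeeping of the log-power counts $\tau(m),\tau(n)$ through successive differentiations, but this is routine once the induction scheme and the trace estimate are in place.
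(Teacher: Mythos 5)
Your overall strategy matches the paper's: induction on $j$, the observation that $\tilde\beta(w):=\beta(b(0)-w,w)$ is $v$-independent so $\partial_v$ hits only $\alpha$, and the diagonal identity $\log(v+i\eps)|_{v=b(0)-w}=\log(b(0)-w+i\eps)$ that converts $v$-singularities of $\beta$ into $w$-singularities of the product. The gap is in the base $L^2_wH^1_{k,v}$ estimate, where you chose the opposite, and harder, of the two possible H\"older splittings.

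You bound $\norm{\alpha\tilde\beta}_{L^2_wH^1_{k,v}}\le\norm{\tilde\beta}_{L^\infty_w}\norm{\alpha}_{L^2_wH^1_{k,v}}$, placing the $L^\infty_w$ requirement on the $g$-dependent factor $\tilde\beta$. The paper instead uses $\norm{\alpha\tilde\beta}_{L^2_wH^1_{k,v}}\le\norm{\alpha}_{L^\infty_wH^1_{k,v}}\norm{\tilde\beta}_{L^2_w}$. The point is that $\norm{\tilde\beta}_{L^2_w}\lesssim\norm{\beta}_{L^2_wL^\infty_v}\lesssim k^{-1/2}\norm{\beta}_{L^2_wH^1_{k,v}}\lesssim k^{-1/2}\norm{g}_{H^n_k}$ uses only condition $(1)$ of Definition \ref{def b0} (the $H^1_{k,v}\hookrightarrow L^\infty_v$ Sobolev trace in $v$) and costs only $\norm{g}_{H^n_k}$, while the $L^\infty_w$ requirement is pushed onto $\alpha$, built from the \emph{fixed} smooth function $\Phi^{b0}_k$, where the $w$-Sobolev embedding costs $k^{+1/2}$ harmlessly; the two half-powers of $k$ cancel to give $k^m\norm{g}_{H^n_k}\lesssim\norm{g}_{H^{m+n}_k}$ sharply.

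Your route needs an $L^\infty_w$ trace on $\tilde\beta$, which you correctly flag as the main obstacle, and it is not a minor technicality. A Morrey or Gagliardo--Nirenberg argument requires $\partial_w\tilde\beta=\bigl[-\partial_v\beta+\partial_w\beta\bigr]_{v=b(0)-w}$ in $L^p_w$ for some $p>1$; the recursive pieces of $\partial_v\beta$ and $\partial_w\beta$ in Definition \ref{def b0} are only $L^2_wH^1_{k,v}$ at the cost of $\norm{g}_{H^{n+1}_k}$ (one level higher than the paper uses), and the accompanying $\log$ factors restricted to the diagonal keep $\partial_w\tilde\beta$ out of $L^2_w$ proper, giving at best $L^{2-\delta}_w$. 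Interpolating, you end up with a bound of the form $k^{O(\delta)}\norm{g}_{H^{m+n}_k}$ rather than the required $\norm{g}_{H^{m+n}_k}$, and this excess compounds across the $N$ levels of the outer induction in Theorem \ref{thm 49}, which has no slack in $k$ or in derivative count. So while the structural skeleton of your proof is right, the base estimate as formulated does not close, whereas the paper's dual choice avoids ever needing $L^\infty_w$ information about the $g$-dependent function. One minor slip: $H^{m+n}_k\subset H^{n+1}_k$ for $m\ge 1$ (with $\norm{\cdot}_{H^{n+1}_k}\lesssim k^{-(m-1)}\norm{\cdot}_{H^{m+n}_k}$), the reverse of the containment you wrote, though you apply it in the right direction.
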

\begin{proof}
    We prove the lemma with an induction argument for $j\ge 0$. For $j = 0$, the lemma is reduced to prove that
    \begin{equation}
        \norm{\alpha(v, w)\beta(b(0) - w, w)}_{L^2_w H^{1}_{k,v}} \lesssim \norm{g}_{H^{m+n}_k}.
    \end{equation}
    As a matter of fact, 
    \begin{equation}
        \begin{split}
            \norm{\alpha(v, w)\beta(b(0) - w, w)}_{L^2_w H^{1}_{k,v}} & \lesssim \norm{\alpha(v, w)}_{L^\infty_w H^{1}_{k,v}}\norm{\beta(b(0) - w, w)}_{L^2_w} \\
            &\lesssim \norm{\alpha(v, w)}_{L^\infty_w H^{1}_{k,v}} \norm{\beta(v, w)}_{L^2_wL^\infty_v} \\
            &\lesssim \frac{1}{2}\norm{\alpha(v, w)}_{H^1_{k,w} H^{1}_{k,v}} \cdot \frac{1}{2} \norm{\beta(v, w)}_{L^2_w H^1_{k,v}} \\
            &\lesssim k^m \norm{g}_{H^n_k} \\
            &\lesssim \norm{g}_{H^{m+n}_k}.
        \end{split}
    \end{equation}
    Assume now the lemma holds for some integer $j_0 \ge 0$. We show that the case $j = j_0 + 1$ also holds. Taking derivative in $v$, there exist functions $\alpha_1, \alpha_2, \alpha_3$ such that
    \begin{equation}
        \partial_v \alpha(v, w) = \alpha_1(v, w)\log(v+i\eps) + \alpha_2(v, w)\log^2(v+i\eps) + \alpha_3(v,w),
    \end{equation}
    and for $1 \le j \le 3$, $\alpha_j(v, w) \in \mathcal{F}^{b0}_{k,\eps}(\Phi^{b0}_k, m + 1, N)$. By assumption $j = j_0 + 1$, we have $N - (m+1) \ge j_0$. Hence the induction assumption implies that $\alpha_j(v, w)\beta(b(0) - w, w) \in \mathcal{F}^{b0}_{k,\eps}(g, m+n+1, m+n+1+j_0)$. 

    We next take one derivative in $w$. The analysis is the same as above. The only difference is that the $w$ derivative can be taken on $\beta(b(0) - w, w)$, which can generate more terms with $\log(b(0) - w + i\eps)$. We omit the details here. In conclusion, we showed that for $j = j_0 + 1$, $\alpha(v, w)\beta(b(0) - w, w)$ has the $\mathcal{F}^{b0}_{k,\eps}(g, m + n, m+n+j_0 + 1)$ type singularity structure. The lemma is proved.
\end{proof}

We next show that different components of the Green's function $\G_k(v+w, v'+w)$ keep the singularity structures.
\begin{lemma}\label{aus lemma 3}
Assume $m \ge 0$, $n \ge 0$, $k \ge 1$, $N\ge 0$ and $\lambda \ge 0$ are five integers. For any $g \in H^{\lambda + N }_k(\R)$, $\eps \in (-1, 1)\setminus\{0\}$ and any $\Gamma(v, w) \in \mathcal{F}^{b0}_{m, n, k,\eps}(g, \lambda, \lambda + N)$. Let $a_{k}(v, v', w)$ be a smooth function such that for any integers $r, s, t \ge 0$,
    \begin{equation}
        |\partial_v^r\partial_{v'}^s\partial_w^t a_k(v, v', w)| \lesssim k^{ r+s+t}.
    \end{equation}
         For $1 \le p \le \tau(n)$, $v, w \in \R$, define
        \begin{equation}
            \Lambda^p(v, w) := \int_\R e^{-k|\ib(v'+w)|}a_k(v, v', w)\sign(v'+w -b(0))\log^p(v'+i\eps)\Gamma(v', w)dv',
        \end{equation}
        and
        \begin{equation}
            \Omega^p(v, w) := \int_\R e^{-k|\ib(v'+w)|}a_k(v, v', w)\log^p(v'+i\eps)\Gamma(v', w)dv'.
        \end{equation}
    Then $\Lambda^p(v, w)$ and $\Omega^p(v, w)$ have the $\mathcal{F}^{b0}_{m, n, k,\eps}(g, \lambda , \lambda  + N)$ type singularity structure.
\end{lemma}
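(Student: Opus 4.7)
My plan is to prove this by induction on $N$, the depth of the recursive singularity structure. The case of $\Omega^p$ is strictly simpler than $\Lambda^p$ (there is no boundary contribution when $\partial_w$ hits the $\sign$ factor), so I concentrate on $\Lambda^p$.

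\textbf{Base case $N=0$.} Only the bound $\|\Lambda^p\|_{L^2_w H^1_{k,v}} \lesssim \|g\|_{H^\lambda_k}$ is required. Since $v$ enters $\Lambda^p$ only through the smooth factor $a_k(v,v',w)$, the derivative $\partial_v\Lambda^p$ is the same integral with $a_k$ replaced by $\partial_v a_k$, gaining at most one factor of $k$ that is absorbed into the $H^1_{k,v}$ norm. Cauchy--Schwarz in $v'$, together with the $L^2_{v'}$-bound $\|e^{-k|\ib(v'+w)|}\|_{L^2_{v'}} \lesssim k^{-1/2}$ (after the change of variables $v' \mapsto \ib(v'+w)$), the local square-integrability of $\log^p(v'+i\eps)$ on the $v$-support of $a_k$, and the level-$0$ hypothesis $\|\Gamma\|_{L^2_w L^2_{v'}} \lesssim \|g\|_{H^\lambda_k}$ coming from $\Gamma \in \mathcal{F}^{b0}_{m,n,k,\eps}(g,\lambda,\lambda)$, yield the estimate.

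\textbf{Inductive step.} I verify the two differential decompositions demanded by Definition \ref{def b0}. For $\partial_v\Lambda^p$: no $\log(v+i\eps)$ factor is produced, so we may take all coefficient functions zero and the remainder $R:=\partial_v\Lambda^p$ is itself an integral of the $\Lambda^p$-form with $\partial_v a_k$ in place of $a_k$ (one extra derivative, hence one extra factor of $k$). By the inductive hypothesis applied at depth $N-1$ with parameters $(m+1,n,\lambda+1,\lambda+N)$, we conclude $R \in \mathcal{F}^{b0}_{m+1,n,k,\eps}(g,\lambda+1,\lambda+N)$, the extra $k$ being absorbed into the shift $H^{\lambda+1}_k \sim k\,H^\lambda_k$. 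For $\partial_w\Lambda^p$, the product rule splits into four pieces. Derivatives hitting $e^{-k|\ib(v'+w)|}$ or $a_k$ produce new $\Lambda^p$-type integrals with a redefined symbol (of the same class up to one extra $k$) and contribute to the remainder $S$ of the $\partial_w$-decomposition. The derivative of $\sign(v'+w-b(0))$ produces $2\delta(v'+w-b(0))$ and yields the boundary term
\[
2\,a_k(v,\,b(0)-w,\,w)\,\log^p(b(0)-w+i\eps)\,\Gamma(b(0)-w,\,w),
\]
which, since $p\le \tau(n)\le \tau(n+1)$, fits precisely into the $\sum_{q=1}^{\tau(n+1)}B_q(v,w)\log^q(b(0)-w+i\eps)$ part of the decomposition. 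Finally, when $\partial_w$ falls on $\Gamma(v',w)$, the recursive structure $\partial_w\Gamma = \sum_q B_q \log^q(b(0)-w+i\eps) + S$ (with $B_q,S \in \mathcal{F}^{b0}_{m,n+1,k,\eps}(g,\lambda+1,\lambda+N)$) allows the $\log^q(b(0)-w+i\eps)$ factor to be pulled outside the $v'$-integral, producing further log-coefficient terms and a remainder integral of $\Lambda^p$-form acting on $B_q$ or $S$, handled again by the inductive hypothesis.

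\textbf{Main obstacle.} The delicate point is showing that the boundary coefficient $a_k(v,b(0)-w,w)\,\Gamma(b(0)-w,w)$ lies in $\mathcal{F}^{b0}_{m,n+1,k,\eps}(g,\lambda+1,\lambda+N)$. This requires a one-dimensional trace-type inequality, $\|\Gamma(b(0)-w,w)\|_{L^2_w} \lesssim \|\Gamma\|_{L^2_w H^1_{k,v}}$, combined with Lemma \ref{aux lemma} to propagate the $v$- and $w$-differential decompositions of the product $a_k\cdot\Gamma(b(0)-w,w)$ along the recursion, and careful bookkeeping of $k$-weights via Lemma \ref{lemma 22}. This parallels the bootstrap arguments underlying Lemmas \ref{lemma 41 fr}--\ref{lemma 43 b1} and constitutes the technical core of the proof.
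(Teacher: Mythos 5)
Your proposal matches the paper's proof: an induction on $N$ with the base case $N=0$ handled by Cauchy--Schwarz, and the inductive step obtained by splitting $\partial_w\Lambda^p$ by the product rule into a $\delta$-boundary term from $\sign'$, smooth-symbol terms from $\partial_w e^{-k|\ib(v'+w)|}$ and $\partial_w a_k$, and a term where $\partial_w$ falls on $\Gamma$ which is resolved via the recursive decomposition of $\Gamma$ and the inductive hypothesis at lower depth. The only substantive difference is that you make explicit the trivial $\partial_v$ decomposition and flag the membership of $a_k(v,b(0)-w,w)\,\Gamma(b(0)-w,w)$ in the coefficient class as a delicate trace step (invoking Lemma \ref{aux lemma}), whereas the paper simply asserts this; your extra care is warranted but does not constitute a different route.
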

\begin{proof}
We prove the lemma with an induction argument for $N \ge 0$. For the case $N = 0$, we need to show that 
\begin{equation}
    \norm{\Lambda^p(v, w)}_{L^2_wH^1_k} + \norm{\Omega^p(v, w)}_{L^2_wH^1_k} \lesssim \norm{g}_{H^\lambda_k},
\end{equation}
which follows from the Cauchy-Schwarz inequality.

Assume for some integer $N_0 \ge 0$ the lemma holds for all $N \le N_0$. We now consider the case $N = N_0 + 1$. By definition $\Lambda^p(v, w)$ and $\Omega^p(v,w)$ are smooth in $v$. We only need to consider their derivatives in $w$. For $\Lambda^p(v, w)$, we have
\begin{equation}
\begin{split}
        &\partial_w \Lambda^p(v, w) \\
        = ~&2a_k(v, b(0) - w, w)\Gamma(b(0) - w, w)\log^p(b(0) - w + i\eps) \\
&-k\int_\R e^{-k|\ib(v'+w)|}\frac{a_k(v, v', w)}{B(v'+w)}\log^p(v'+i\eps)\Gamma(v', w)dv' \\
&+\int_\R \bigg[e^{-k|\ib(v'+w)|} \partial_wa_k(v, v', w)\sign(v'+w -b(0)) \\ &\qquad \times\log^p(v'+i\eps)\Gamma(v', w)\bigg]dv' \\
& +\int_\R e^{-k|\ib(v'+w)|}a_k(v, v', w)\sign(v'+w -b(0))\\ &\qquad \times\log^p(v'+i\eps)\partial_w\Gamma(v', w)dv'\\
:= ~&T_1 + T_2 + T_3 + T_4.
\end{split}
\end{equation}

For $T_1$ we have 
\begin{equation}
    2a_k(v, b(0) - w, w)\Gamma(b(0) - w, w) \in \mathcal{F}^{b0}_{m, n, k,\eps}(g, \lambda, \lambda + N_0 + 1).
\end{equation}

For $T_2$ and $T_3$ it follows from the induction assumption that 
\begin{equation}
    T_2, T_3 \in \mathcal{F}^{b0}_{m, n, k,\eps}(g, \lambda + 1, \lambda + N_0 + 1).
\end{equation}

For $T_4$, since we assumed that $\Gamma(v, w)\in \mathcal{F}^{b0}_{m, n, k,\eps}(g, \lambda, \lambda + N_0 + 1)$, there exist functions $B(v, w), R(v, w) \in \mathcal{F}^{b0}_{m, n+1, k,\eps}(g, \lambda + 1, \lambda + N_0 + 1)$ such that
\begin{equation}
    \partial_w \Gamma(v, w) = B(v, w)\log(b(0) - w + i\eps) + R(v, w).
\end{equation}
Set 
\begin{equation}
\begin{split}
      \widetilde{B}(v, w) = &\int_\R e^{-k|\ib(v'+w)|}a_k(v, v', w)\sign(v'+w -b(0))\\ &\quad\times\log^p(v'+i\eps)B(v', w)dv',
\end{split}
\end{equation}
and 
\begin{equation}
\begin{split}
    \widetilde{R}(v, w) = &\int_\R e^{-k|\ib(v'+w)|}a_k(v, v', w)\sign(v'+w -b(0))\\ &\quad\times\log^p(v'+i\eps)R(v', w)dv'.
\end{split}
\end{equation}
Using the induction assumption for $N = N_0$, we have 
\begin{equation}
    \widetilde{B}(v, w), ~\widetilde{R}(v, w) \in \mathcal{F}^{b0}_{m, n+1, k,\eps}(g, \lambda + 1, \lambda + N_0 + 1),
\end{equation}
and
\begin{equation}
    T_4 = \widetilde{B}(v, w) \log(b(0) - w + i\eps ) + \widetilde{R}(v, w).
\end{equation}
Therefore, we have $\Lambda^p(v, w) \in \mathcal{F}^{b0}_{m, n, k,\eps}(g, \lambda, \lambda + N_0 + 1)$. The analysis of $\Omega^p(v, w)$ is similar.
\end{proof}

Using similar ideas, we can also prove the following two lemmas.
\begin{lemma}\label{lemma 48}
    Assume $m \ge 0$, $n \ge 0$, $k \ge 1$, $N\ge 0$ and $\lambda \ge 0$ are five integers. For any $g \in H^{\lambda + N }_k(\R)$, $\eps \in (-1, 1)\setminus\{0\}$ and any $\Gamma(v, w) \in \mathcal{F}^{b0}_{m, n, k,\eps}(g, \lambda, \lambda + N)$. Let $a_{k}(v, v', w)$ be a smooth function such that for any integers $r, s, t \ge 0$,
    \begin{equation}
        |\partial_v^r\partial_{v'}^s\partial_w^t a_k(v, v', w)| \lesssim k^{ r+s+t}.
    \end{equation}
         For $1 \le p \le \tau(n)$, $v, w \in \R$, define
        \begin{equation}
        \begin{split}
            \Lambda^p(v, w) := &\int_\R e^{-k|1- \ib(v'+w)|}a_k(v, v', w)\sign(v'+w -b(1))\\ &\quad\times\log^p(v'+i\eps)\Gamma(v', w)\Upsilon_1(w)dv',
        \end{split}
        \end{equation}
        and
        \begin{equation}
            \Omega^p(v, w) := \int_\R e^{-k|1- \ib(v'+w)|}a_k(v, v', w)\log^p(v'+i\eps)\Gamma(v', w)\Upsilon_1(w)dv'.
        \end{equation}
    Then $\Lambda^p(v, w)$ and $\Omega^p(v, w)$ have the $\mathcal{F}^{b0}_{m, n, k,\eps}(g, \lambda , \lambda  + N)$ type singularity structure.
\end{lemma}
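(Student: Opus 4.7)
The plan is to mirror the inductive proof of Lemma \ref{aus lemma 3}, exploiting the key geometric observation that the cutoff $\Upsilon_1(w)$ localizes $w$ near $b(0)$, so the critical point $v'=b(1)-w$ of the weight $e^{-k|1-\ib(v'+w)|}$ stays uniformly away from $v'=0$. Consequently, any $\log^p(v'+i\eps)$ evaluated at $v'=b(1)-w$ is a smooth function of $w$ with bounded derivatives; no new logarithmic singularity in $w$ is produced by the boundary-at-$b(1)$ geometry. This is the fundamental reason why $\Lambda^p$ and $\Omega^p$ remain of $b(0)$-type rather than acquiring $\log(b(1)-w+i\eps)$ singularities.

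The proof will proceed by induction on $N$. For the base case $N=0$, I would bound $\|\Lambda^p(\cdot,w)\|_{H^1_k}+\|\Omega^p(\cdot,w)\|_{H^1_k}$ by Cauchy--Schwarz in $v'$, using $\|e^{-k|1-\ib(\cdot+w)|}\|_{L^2}\lesssim k^{-1/2}$, the compact support of $a_k$ inherited from $\Psi_k$, and the fact that a $\partial_v$ only acts on $a_k$ and so costs at most one factor of $k$ that is absorbed by the $H^1_k$ norm. For the inductive step, assuming the statement for all $N\le N_0$ and considering $N=N_0+1$, the $v$-smoothness is automatic and the real content lies in $\partial_w$. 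Differentiating $\Lambda^p$ generates four contributions: (i) a delta from $\partial_w\sign(v'+w-b(1))=2\delta(v'+w-b(1))$; (ii) a factor $\mp k/B(v'+w)$ from $\partial_w e^{-k|1-\ib(v'+w)|}$; (iii) $\partial_w a_k$ and $\partial_w\Upsilon_1$; (iv) $\partial_w\big(\Gamma(v',w)\Upsilon_1(w)\big)$.

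Contributions (ii) and (iii) are of the same integral form as $\Lambda^p,\Omega^p$ but with $\lambda$ replaced by $\lambda+1$, and the induction hypothesis at level $N_0$ places them in $\mathcal{F}^{b0}_{m,n,k,\eps}(g,\lambda+1,\lambda+N_0+1)$. For contribution (iv), Definition \ref{def b0} applied to $\Gamma$ gives $B,R\in\mathcal{F}^{b0}_{m,n+1,k,\eps}(g,\lambda+1,\lambda+N_0+1)$ with $\partial_w\Gamma=B\log(b(0)-w+i\eps)+R$; substituting and pulling the $w$-only factor outside the $v'$-integral produces $\widetilde B(v,w)\log(b(0)-w+i\eps)+\widetilde R(v,w)$, where $\widetilde B,\widetilde R$ are integrals of the same shape as $\Lambda^p$ with $B$ or $R$ in place of $\Gamma$; the induction hypothesis at level $N_0$ places them in $\mathcal{F}^{b0}_{m,n+1,k,\eps}(g,\lambda+1,\lambda+N_0+1)$, as required for the $\partial_w$ clause of Definition \ref{def b0}.

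The main obstacle, and the step requiring genuine care, is contribution (i): it evaluates to
\begin{equation*}
2\,a_k(v,b(1)-w,w)\,\Gamma(b(1)-w,w)\,\log^p(b(1)-w+i\eps)\,\Upsilon_1(w).
\end{equation*}
Here is where $\Upsilon_1$ is indispensable. Because $b(1)-w$ is bounded away from $0$ on $\supp\Upsilon_1$, the factor $\log^p(b(1)-w+i\eps)$ is $C^\infty$ in $w$ with bounded derivatives, and the composition $\Gamma(b(1)-w,w)\Upsilon_1(w)$ inherits the $\mathcal{F}^{b0}_{m,n,k,\eps}$ structure of $\Gamma$: the $v$-slot evaluation at $b(1)-w$ never encounters the $\log(v+i\eps)$ singularity, while the $w$-slot continues to produce at most $\log(b(0)-w+i\eps)$ singularities in full agreement with Definition \ref{def b0}. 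The treatment of $\Omega^p$ is strictly simpler because the $\sign$ factor is absent, so contribution (i) does not appear, and the remaining three contributions are handled exactly as above. Once these checks are assembled, the decomposition $\partial_w\Lambda^p=\widetilde B\log(b(0)-w+i\eps)+\widetilde R$ with the required memberships is complete, closing the induction.
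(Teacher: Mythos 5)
Your proposal is correct and follows the same approach the paper uses: you observe that $\Upsilon_1(w)$ localizes $w$ near $b(0)$ so the critical point $v'=b(1)-w$ of the weight and $\sign$ factor stays uniformly away from $v'=0$, whence the delta term from differentiating $\sign(v'+w-b(1))$ produces no new singularity; you then run the same induction on $N$ as in Lemma \ref{aus lemma 3}. The paper's own proof states exactly these two points (in a more condensed form), so your argument is a faithful elaboration rather than a different route.
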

\begin{proof}
    Note that on the support of $\Upsilon_1(w)$, $w$ is supported away from the boundary point $b(1)$. Hence the component
    $$e^{-k|1- \ib(v'+w)|}\sign(v'+w -b(1))$$
    actually does not generate any singularity. We can apply an induction argument similar to that in the proof of Lemma \ref{aus lemma 3} to prove this lemma.
   
\end{proof}

\begin{lemma}\label{lemma 49}
    Assume $m \ge 0$, $n \ge 0$, $k \ge 1$, $N\ge 0$ and $\lambda \ge 0$ are five integers. For any $g \in H^{\lambda + N }_k(\R)$, $\eps \in (-1, 1)\setminus\{0\}$ and any $\Gamma(v, w) \in \mathcal{F}^{b0}_{m, n, k,\eps}(g, \lambda, \lambda + N)$. Let $a_{k}(v, v', w)$ be a smooth function such that for any integers $r, s, t \ge 0$,
    \begin{equation}
        |\partial_v^r\partial_{v'}^s\partial_w^t a_k(v, v', w)| \lesssim k^{ r+s+t}.
    \end{equation}
         For $1 \le p \le \tau(n)$, $v, w \in \R$, define
        \begin{equation}
        \begin{split}
             \Lambda^p(v, w) := &\int_\R e^{-k|\ib(v+w) - \ib(v'+w)|}a_k(v, v', w)\sign(v - v')\\ &\quad\times\log^p(v'+i\eps)\Gamma(v', w)dv',
        \end{split}
        \end{equation}
        and
        \begin{equation}
            \Omega^p(v, w) := \int_\R e^{-k|\ib(v+w) - \ib(v'+w)|}a_k(v, v', w)\log^p(v'+i\eps)\Gamma(v', w)dv'.
        \end{equation}
    Then $\Lambda^p(v, w)$ and $\Omega^p(v, w)$ have the $\mathcal{F}^{b0}_{m, n, k,\eps}(g, \lambda , \lambda  + N)$ type singularity structure.
\end{lemma}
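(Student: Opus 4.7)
\textbf{Proof plan for Lemma \ref{lemma 49}.} The plan is to mirror the inductive strategy used in Lemma \ref{aus lemma 3} and Lemma \ref{lemma 48}, with the key new feature that the kernel $e^{-k|\ib(v+w)-\ib(v'+w)|}$ depends nontrivially on both $v$ and $w$. I would induct on the parameter $N \ge 0$ in the definition of $\mathcal{F}^{b0}_{m,n,k,\eps}(g,\lambda,\lambda+N)$, keeping $m,n,\lambda$ free. In the base case $N=0$ I only need the $L^2_w H^1_{k,v}$ estimate: using $\|e^{-k|\ib(v+w)-\ib(v'+w)|}\|_{L^2_{v'}}\lesssim k^{-1/2}$, local integrability of $|\log(v'+i\eps)|^p$, and the growth bound $|\partial_v^r\partial_{v'}^s\partial_w^t a_k|\lesssim k^{r+s+t}$ (giving at most one power of $k$ when we differentiate once in $v$), the Cauchy–Schwarz inequality yields $\|\Lambda^p\|_{L^2_w H^1_{k,v}}+\|\Omega^p\|_{L^2_w H^1_{k,v}}\lesssim \|g\|_{H^\lambda_k}$.

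For the inductive step, assume the lemma for $N=N_0$ and let $\Gamma\in\mathcal{F}^{b0}_{m,n,k,\eps}(g,\lambda,\lambda+N_0+1)$. I would compute $\partial_v\Lambda^p$ and $\partial_w\Lambda^p$ (and similarly for $\Omega^p$). Differentiating in $v$, the derivative falls on three places: on the exponential, producing $-\frac{k}{B(v+w)}\sign(v-v')\cdot e^{-k|\ib(v+w)-\ib(v'+w)|}$ which (after pairing with the existing $\sign(v-v')$) gives an $\Omega^p$-type integral with a new symbol $-k a_k/B$ satisfying the same derivative bound class; on $a_k$, yielding another $\Lambda^p$-type integral with symbol $\partial_v a_k$; and on $\sign(v-v')$, giving the singular boundary term $2a_k(v,v,w)\log^p(v+i\eps)\Gamma(v,w)$. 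The first two fall under the inductive hypothesis with $\lambda$ raised by one, and the last is the expected singular term carrying $\log^p(v+i\eps)$ whose coefficient $2a_k(v,v,w)\Gamma(v,w)$ is easily placed in $\mathcal{F}^{b0}_{m+1,n,k,\eps}(g,\lambda+1,\lambda+N_0+1)$ using the product structure inherent in Definition \ref{def b0}.

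Differentiating in $w$ is the main obstacle since $w$ appears in the exponential, in $a_k$, in $\Gamma$, and (when we also differentiate the $\sign$ via $v'+w$ shifts, though here there is none in the argument) the analysis is delicate. I would write $\partial_w e^{-k|\ib(v+w)-\ib(v'+w)|}=-k\bigl[\tfrac{1}{B(v+w)}-\tfrac{1}{B(v'+w)}\bigr]\sign(v-v')\,e^{-k|\ib(v+w)-\ib(v'+w)|}$ and crucially rewrite $\tfrac{1}{B(v+w)}-\tfrac{1}{B(v'+w)}=(v'-v)\tilde a(v,v',w)$ for some smooth bounded $\tilde a$; combined with the decay of the exponential this absorbs the factor of $k$, and the resulting integral is again of $\Lambda^p/\Omega^p$ type, handled by induction. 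The derivative of $a_k$ in $w$ is harmless. The derivative of $\Gamma(v',w)$ in $w$ decomposes, by the hypothesis $\Gamma\in\mathcal{F}^{b0}_{m,n,k,\eps}(g,\lambda,\lambda+N_0+1)$, as $B(v',w)\log(b(0)-w+i\eps)+R(v',w)$ with $B,R\in\mathcal{F}^{b0}_{m,n+1,k,\eps}(g,\lambda+1,\lambda+N_0+1)$; pulling $\log(b(0)-w+i\eps)$ out of the $v'$-integral produces exactly the expected $\log^{q}(b(0)-w+i\eps)$ expansion (with $q$ increased by one, matching $\tau(n+1)$), and the remaining $B$- and $R$-integrals are $\Lambda^p$-type with the new data, to which the inductive hypothesis at $N_0$ applies.

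The main technical obstacle is bookkeeping: tracking that every new term produced by the Leibniz rule lands in the correct class $\mathcal{F}^{b0}_{m,n,k,\eps}$ or $\mathcal{F}^{b0}_{m+1,n,k,\eps}$ or $\mathcal{F}^{b0}_{m,n+1,k,\eps}$ as required by Definition \ref{def b0}, and that the $k$-factors produced by differentiating exponentials are always absorbed either by explicit $k^{-1}$ factors from integrating the exponential kernel or by the difference structure $\tfrac{1}{B(v+w)}-\tfrac{1}{B(v'+w)}$ described above. Once this bookkeeping is carried out, the conclusion $\Lambda^p,\Omega^p\in\mathcal{F}^{b0}_{m,n,k,\eps}(g,\lambda,\lambda+N_0+1)$ follows, completing the induction.
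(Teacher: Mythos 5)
Your overall plan — induction on $N$ with $\lambda$ free, Cauchy--Schwarz for the base case $N=0$, and a Leibniz-rule inductive step that isolates the $\delta(v-v')$ term from $\partial_v\sign(v-v')$ and feeds the $\partial_w\Gamma$ expansion back through the inductive hypothesis — is exactly the strategy the paper uses for the sibling Lemma \ref{aus lemma 3} and then invokes ``by similar ideas'' for Lemma \ref{lemma 49}. But your treatment of $\partial_w$ acting on the kernel $e^{-k|\ib(v+w)-\ib(v'+w)|}$ has a genuine flaw. The factorization $\tfrac{1}{B(v+w)}-\tfrac{1}{B(v'+w)}=(v'-v)\tilde a(v,v',w)$ is true, but the resulting symbol $k(v'-v)\tilde a\,a_k$ does \emph{not} satisfy the required bound $|\partial^r_v\partial^s_{v'}\partial^t_w(\cdot)|\lesssim k^{r+s+t}$: it grows like $k|v-v'|$, which is unbounded, so the integral is not of $\Lambda^p/\Omega^p$ type and the inductive hypothesis does not apply to it. You cannot ``absorb'' $k|v-v'|$ into the exponential without changing the kernel (say to $e^{-k|\cdot|/2}$), which places you outside the class the lemma is stated for; so as written the step does not close.

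The fix is also the simpler route, and it is what the proof of Lemma \ref{aus lemma 3} does for the analogous term: do not try to cancel the power of $k$. Split $\tfrac{1}{B(v+w)}-\tfrac{1}{B(v'+w)}$ into its two summands; each piece produces an integral of $\Lambda^p$- or $\Omega^p$-type with new symbol $\mp k\,a_k/B$, whose derivative bound is $\lesssim k^{1+r+s+t}$ — one power worse, not ``the same derivative bound class'' as you claim. That extra power is precisely what makes the term land in $\mathcal{F}^{b0}_{m,n,k,\eps}(g,\lambda+1,\lambda+N_0+1)$ by the inductive hypothesis, and Definition \ref{def b0} allows exactly this increment of $\alpha$ by one in the remainder of $\partial_w\Gamma$. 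Note also the same correction applies to your $\partial_v$ computation: the symbol $-ka_k/B$ there does not lie in the original symbol class either; you already compensate by raising $\lambda$, so the conclusion is right but the stated justification is not. With the $(v'-v)$ trick replaced by the $k$-accepting split, your proposal becomes essentially the paper's proof.
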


Our main result in this section is the characterization of the singularity structure of the spectrum density function $\Theta^1_{k,\eps}(v, w)$ with $w$ close to the boundary point $b(0)$, assuming that the right-hand side of the equation has the $\mathcal{F}^{b0}_{0,0, k,\eps}(g, \lambda, \lambda + N)$ type singularity structure.

\begin{theorem}\label{thm 49}
    For any integer $N \ge 0$, $k \ge 1$, $\eps \in (-1, 1)\setminus \{0\}$. For any integer $\lambda \ge 1$ and function $g \in H^{\lambda + N}_k(\R)$, assume $\Gamma(v, w)$ has the $\mathcal{F}^{b0}_{0,0, k,\eps}(g, \lambda, \lambda + N)$ type singularity structure. Let $\Theta^1_{k,\eps}(v, w) := \Theta_{k,\eps}(v, w)\Upsilon_1(w)$ solve the following equation
    \begin{equation}\label{eq 497}
        \begin{split}
			\Theta^1_{k,\epsilon}(v,w) + T_{k,\eps} \Theta^1_{k,\eps}(v,w) = \Gamma(v, w)
   \end{split}
    \end{equation}
    where $T_{k,\eps}$ is the operator defined in (\ref{Operator T}). Then $\Theta^1_{k,\eps}(v, w)$ also has the $\mathcal{F}^{b0}_{0,0,k,\eps}(g, \lambda, \lambda + N)$ type singularity structure.
\end{theorem}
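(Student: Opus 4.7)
My plan is to proceed by induction on the integer $N$, which controls the depth of recursion in the class $\mathcal{F}^{b0}_{0,0,k,\eps}(g,\lambda,\lambda+N)$. For the base case $N=0$, the definition reduces to the single requirement $\norm{\Theta^1}_{L^2_w H^1_{k,v}}\lesssim \norm{g}_{H^\lambda_k}$, which follows directly from Proposition \ref{lap}: the operator $I+T_{k,\eps}$ is invertible on $H^1_k(\R)$ uniformly in $w$ and $\eps$, so $\norm{\Theta^1(\cdot,w)}_{H^1_k}\le \sigma_0^{-1}\norm{\Gamma(\cdot,w)}_{H^1_k}$, and integrating in $w$ gives the desired bound.

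For the inductive step, I will assume the result at level $N-1$ and prove it at level $N$. The norm bound on $\Theta^1$ at depth $N$ is again immediate from Proposition \ref{lap}. To extract the $v$-decomposition, I will write $\partial_v\Theta^1=\partial_v\Gamma-\partial_v(T_{k,\eps}\Theta^1)$, use the hypothesis on $\Gamma$ for the first term, and apply a careful integration by parts for the second. Decomposing $\G_k=\G^{fr}_k+\G^{b0}_k+\G^{b1}_k$ and using $1/(v'+i\eps)=\partial_{v'}\log(v'+i\eps)$ as in the derivation \eqref{comp 411}--\eqref{comp 413} behind Lemma \ref{lemma 41 fr}, only the free part produces a boundary term proportional to $\Theta^1(v,w)\log(v+i\eps)$, while the boundary Green's function pieces are smooth in $v$ and contribute only integral remainders. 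The $w$-decomposition is treated symmetrically, paralleling Lemma \ref{lemma 42 b0}: differentiating in $w$ and integrating by parts yields a $\log(b(0)-w+i\eps)$ factor from $\G^{b0}_k$, $\G^{fr}_k$ is smooth in $w$, and the $\G^{b1}_k$ contribution is harmless because $\Upsilon_1(w)$ localizes $w$ away from $b(1)$, so Lemma \ref{lemma 43 b1} applies.

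To verify that the resulting coefficient functions and remainders lie in $\mathcal{F}^{b0}_{1,0,k,\eps}(g,\lambda+1,\lambda+N)$ and $\mathcal{F}^{b0}_{0,1,k,\eps}(g,\lambda+1,\lambda+N)$ respectively, I will invoke Lemmas \ref{aus lemma 3}, \ref{lemma 48} and \ref{lemma 49}, which are engineered precisely to show that integrals of the form $\int e^{-k|\cdots|}a_k(v,v',w)\log^p(v'+i\eps)\Gamma(v',w)\,dv'$ preserve the relevant $\mathcal{F}^{b0}$ class with the correct index shifts. The inductive hypothesis feeds $\Theta^1$ into these lemmas at the lower depth $N-1$, with parameter matching supplied by Lemma \ref{aux lemma 2}, and Lemma \ref{aux lemma} handles the factor $\Theta^1(b(0)-w,w)$ that multiplies $\log(b(0)-w+i\eps)$. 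I expect the main obstacle to be the simultaneous bookkeeping of the four parameters $(m,n,\alpha,N')$: each differentiation increments $\alpha$ by one and bumps exactly one of $m$ or $n$, and the auxiliary lemmas must be applied at precisely the matching parameters to close the induction. A secondary subtlety is the apparent circularity of $\Theta^1$ appearing inside its own decomposition coefficients; this is resolved by first securing the depth-$N$ norm bound via Proposition \ref{lap} and then invoking the depth-$N-1$ hypothesis only for expressions derived from it.
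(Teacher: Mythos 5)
Your overall strategy matches the paper's: an outer induction on $N$, the limiting absorption principle for the base case and the norm bound, integration by parts against $1/(v'+i\eps) = \partial_{v'}\log(v'+i\eps)$ with the splitting $\G_k = \G^{fr}_k + \G^{b0}_k + \G^{b1}_k$, and the auxiliary Lemmas~\ref{aus lemma 3}--\ref{lemma 49} and \ref{aux lemma}--\ref{aux lemma 2} to propagate the singularity class. However, there are two concrete places where the plan as stated would not close, and they are exactly where the paper introduces the explicit two-parameter hierarchy $R^{m,n}$ with tracked coefficients $\alpha^n_j,\tau^{m,n}_j,\eta^{m,n}_j,\beta^n_j,\gamma^n_j$, etc.

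First, the $v$- and $w$-differentiations are \emph{not} symmetric, as your proposal suggests. The operator $T_{k,\eps}$ acts on the $v$-variable of the unknown while its kernel depends on $v$, $v'$, and $w$. Differentiating the equation in $v$ yields an \emph{explicit} formula for $\partial_v\Theta^1$, since $\partial_v[T_{k,\eps}\Theta^1]$ never produces $T_{k,\eps}\partial_v\Theta^1$. By contrast, differentiating in $w$ \emph{does} reproduce $T_{k,\eps}\partial_w\Theta^1$, so one must solve $(I+T_{k,\eps})\partial_w\Theta^1 = \mathcal{B}\log(b(0)-w+i\eps) + \mathcal{S}$ again and re-invoke Proposition~\ref{lap}. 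Because $\mathcal{B}$ contains the factor $\Theta^1(b(0)-w,w)$ (a function of $w$ alone, hence commuting with $(I+T_{k,\eps})^{-1}$), the paper can pull it outside and invert only on the smooth amplitude $\Phi^{b0}_k(v+w)\partial_v B(b(0))$; that is the device that makes Lemma~\ref{aux lemma} applicable and is not just a ``symmetric'' version of the $v$-step.

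Second, the remainder in the $v$-decomposition, built by integration by parts, has integrands involving $\partial_{v'}\Theta^1(v',w)$ (e.g.\ the $\partial_{v'}$ of the combination $\partial_{v'}B\,\Theta^1$). If you substitute the inductive expansion $\partial_{v'}\Theta^1 = A^\Theta\log(v'+i\eps)+R^\Theta$ into those integrals, you generate $\log^2(v'+i\eps)$ factors, which exceed the power constraint $1\le p\le\tau(n)$ in Lemmas~\ref{aus lemma 3}--\ref{lemma 49} at the lowest indices, so these lemmas do not apply directly. The paper avoids this by \emph{not substituting}: the expressions (IA~4), (IA~6), (IB~3), (IB~5) in the proof of Theorem~\ref{thm 49} keep the integrand at $\log^1(v'+i\eps)$ with $R^{0,j}$ and $\partial_{v'}R^{0,j}$ appearing opaquely, tracked by the coefficient functions $a_k^{n,p},b_k^{n,p},g^{m,n,j}_k,h^{m,n,j}_k$, and the $\log^2$ only enters through the explicit $A^{m,n}_j\log^j$ terms in Claim~4. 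Relatedly, the norm bound at the top of the hierarchy (Step~4) is obtained by an absorption/linear-ODE argument on $\partial_v R^{1,N_0}$, not by the auxiliary lemmas alone. Finally, your plan also needs a multiplication statement (smooth $k$-weighted amplitude times a member of $\mathcal{F}^{b0}$ stays in $\mathcal{F}^{b0}$ with a one-unit $\alpha$-shift) to handle the term $\frac{2\Psi_k\partial_v B}{B}\Theta^1$; this is used in the paper implicitly via the coefficient bounds (Claim~3, Claim~5) but does not appear among the stated lemmas, so you would have to prove it separately. Without these three pieces --- the correct asymmetric treatment of $\partial_w$, the ``keep $\partial_{v'}R^{0,j}$ opaque'' bookkeeping via $R^{m,n}$, and a multiplication lemma --- the inductive step does not close as written.
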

\begin{proof}
If $N = 0$, the theorem follows directly from the limiting absorption principle. We apply induction arguments to show that Theorem \ref{thm 49} holds for any $N \ge 0$. Assume for some $N_0 \ge 0$ the theorem holds for all $0 \le N \le N_0$. We now consider the case $N = N_0 + 1$. We first analyze remainder terms.
%We have showed in the proof of Lemma \ref{base case} that there exist functions $A(v, w)$, $B(v, w)$, $R(v, w)$ and $S(v, w)$ such that
%\begin{equation}
%    \begin{split}
%        &\partial_v \Theta_{k, \eps}^1(v, w) = A(v, w)\log(v+i\eps) + R(v, w) \\
%        &\partial_w \Theta_{k,\eps}^1(v, w) = B(v, w)\log(b(0) - w + i\eps) + S(v, w),
%    \end{split}
%\end{equation}
%and 
%    \begin{equation}
 %       \norm{A(v, w)}_{L^2_wH^1_{k,v}} +  \norm{B(v, w)}_{L^2_wH^1_{k,v}} +  \norm{R(v, w)}_{L^2_wH^1_{k,v}} +  \norm{S(v, w)}_{L^2_wH^1_{k,v}} \lesssim %\norm{f_0^k}_{H^2_k}.
%    \end{equation}
%The coefficient functions $A(v, w)$ and $B(v, w)$ are given by (\ref{mt 447}) and (\ref{mt 465}) (with (\ref{eqq 470})). From the expression of $A(v, w)$ %and $B(v, w)$ we observe that their singularity structures follow directly from the singularity structure of $\Theta^1_{k,\eps}(v, w)$. Hence we only need %to study the singularity structures of the remainder terms $R(v, w)$ and $S(v, w)$. 
In order to streamline the notation, we set 
\begin{equation}
    R^{0, 0}(v, w) := \Theta^1_{k,\eps}(v, w).
\end{equation}
 We have the following claim.

{\bf Claim:} For $0 \le n \le N_0 + 1$ and $0\le m \le N_0 + 1 - n$, there exist functions $R^{m,n}(v, w)$ satisfying the following properties.
\begin{enumerate}
    \item For $0 \le n \le N_0 + 1$ and $0\le m \le N_0 + 1 - n$,
    \begin{equation}\tag{Claim 1}\label{claim 1}
        \norm{R^{m, n}(v, w)}_{L^2_w H^1_{k,v}} \lesssim \norm{g}_{H^{m + n + \lambda}_k}
    \end{equation}
    %\begin{align}
    %\norm{R^{0, n}(v, w)}_{L^2_w H^1_{k,v}} \lesssim \norm{g}_{H^{n+1}_k},\tag{Claim 1}\label{claim 1}\\
    %\norm{R^{1, n}(v, w)}_{L^2_w H^1_{k,v}} \lesssim \norm{g}_{H^{n+2}_k}.\tag{Claim 2}\label{claim 2}
%\end{align}
\item For $0 \le n \le N_0$, there exist a function $A^n(v, w, k) \in \mathcal{F}^{b0}_{1, n, k, \eps}(g, \lambda + n + 1, \lambda + N_0 + 1)$ and smooth functions $\alpha^n_j(v, w, k)$, $0 \le j \le n$, such that
\begin{equation}\tag{Claim 2}\label{claim 2}
\begin{split}
        \partial_v R^{0, n}(v, w) = ~&\sum_{j = 0}^{n} \alpha^n_j(v,w, k) R^{0, j}(v, w)\log(v+i\eps) \\
        &+ A^n(v, w, k)\log(v+i\eps) + R^{1, n}(v, w),
\end{split}
\end{equation}
and for any integers $p, q \ge 0$,
\begin{equation}\tag{Claim 3}\label{claim 3}
 |\partial_v^p\partial_w^q \alpha_j^n(v, w, k)| \lesssim k^{n-j + p + q}.
\end{equation}
\item For $0 \le n \le N_0$, $1 \le m \le N_0 - n$, there exist functions $A^{m, n}_{j}(v, w) \in \mathcal{F}^{b0}_{m, n, k,\eps}(g, \lambda + m+n+1, \lambda + N_0+1)$, $j \in \{1, 2\}$ and smooth functions $\tau^{m, n}_j(v, w)$, $\eta^{m, n}_j(v, w)$ such that
\begin{equation}\tag{Claim 4}\label{claim 4}
\begin{split}
    &\partial_v R^{m, n}(v, w) \\= &\sum_{j = 0}^n \big[\tau^{m, n}_j(v,, w)R^{0, j}(v, w) + \eta^{m, n}_j(v, w)\partial_vR^{0, j}(v, w)\big]\log(v+i\eps) \\
    &+\sum_{j\in\{1, 2\}} A^{m, n}_{j}(v, w)\log^j(v+i\eps) + R^{m+1, n}(v,w).
\end{split}
\end{equation}
Furthermore, for any integer $p, q\ge 0$, 
\begin{equation}\tag{Claim 5}\label{claim 5}
        |\partial_v^p\partial_w^q g^{m, n}_j(v, w)| \lesssim k^{m+n-j+p+q}, \quad  |\partial_v^p\partial_w^q h^{m, n}_j(v, w)| \lesssim k^{m+n-j+p+q-1}
    \end{equation}
\item For $0 \le n \le N_0$, there exist a function $B^{n}(v, w, k)\in \mathcal{F}^{b0}_{k, \eps}(g, \lambda + n + 1, \lambda + N_0 + 1)$ and functions $\beta^{n}_j(v, w, k)$ and $\gamma^{n}_j(v, w, k)$, $0 \le j \le n$ such that
\begin{equation}\tag{Claim 6}\label{claim 6}
\begin{split}
        &\partial_w R^{0, n }(v,w) \\= ~&\sum_{j = 0}^{n} \beta^{n}_j(v,w,k)R^{0,j}(b(0) - w, w)\log(b(0) - w + i\eps) \\
        &+ \sum_{j = 0}^{n-1} \gamma^{n}_j(v,w,k)\partial_vR^{0,j}(b(0) - w, w)\log(b(0) - w + i\eps) \\
        %&+ \sum_{j = 0}^{n-1} \zeta^{n}_j(v,w,k)R^{1,j}(b(0) - w, w)\log(b(0) - w + i\eps) \\
        &+ B^{n}(v, w, k)\log(b(0) - w + i\eps) + R^{0, n+1}(v, w).
\end{split}
\end{equation}
Further more, $\beta^{n}_j(v, w, k)$ has the $\mathcal{F}^{b0}_{0, 0, k,\eps}(\Phi^{b0}_k, n-j+1, N_0 + 1)$ type singularity structure; $\gamma^{n}_j(v, w, k)$ has the $\mathcal{F}^{b0}_{0, 0, k,\eps}(\Phi^{b0}_k, n-j, N_0 + 1)$ type singularity structure. The smooth function $\Phi^{b0}_k$ is defined in (\ref{phi b0})
\item For $0 \le n \le N_0$, 
\begin{equation}\label{claim 7}\tag{Claim 7}
    R^{0, n}(v, w) \in \mathcal{F}^{b0}_{0, n, k,\eps}(g, \lambda + n, \lambda + N_0).
\end{equation}
\item For $0 \le n \le N_0 - 1$, 
\begin{equation}\label{claim 8}\tag{Claim 8}
    R^{1, n}(v, w) \in \mathcal{F}^{b0}_{1, n, k,\eps}(g, \lambda + n + 1, \lambda + N_0).
\end{equation}
\end{enumerate}
{\bf Step 1: proof of the case $N = N_0 + 1$ assuming (\ref{claim 1}) to (\ref{claim 8}) are true.} For the case $N = N_0 + 1$, we would like to show that for any $0 \le n \le N_0 + 1$, $R^{0, n}(v, w)$ has the $\mathcal{F}^{b0}_{0, n, k,\eps}(g, n + \lambda, \lambda + N_0 + 1)$ type singularity structure. We apply another induction argument to show this. 

For $n = N_0 + 1$, it follows from (\ref{claim 1}) that $R^{0, N_0+1}(v, w)$ has the $\mathcal{F}^{b0}_{0, N_0 + 1, k,\eps}(g, \lambda + N_0+1, \lambda + N_0+1)$ type singularity structure. Assume for some $j_0 \ge 0$, $R^{0, N_0 + 1 -j_0}(v, w)$ has the $\mathcal{F}^{b0}_{0, N_0 + 1 -j_0, k,\eps}(g, \lambda + N_0 - j_0 + 1, \lambda + N_0+1)$ type singularity structure. Now let us consider the singularity structure of $R^{0, N_0 - j_0}(v, w)$.

It follows from (\ref{claim 6}) and (\ref{claim 2}) that 
\begin{equation}
\begin{split}
        &\partial_w R^{0, N_0-j_0 }(v,w)\\ = ~&\sum_{j = 0}^{N_0-j_0} \beta^{N_0-j_0}_j(v,w,k)R^{0,j}(b(0) - w, w)\log(b(0) - w + i\eps) \\
        &+ \sum_{j = 0}^{N_0-j_0-1} \gamma^{N_0-j_0}_j(v,w,k)R^{0,j}(b(0) - w, w)\log^2(b(0) - w + i\eps) \\
        &+ \sum_{j = 0}^{N_0-j_0-1} \zeta^{N_0-j_0}_j(v,w,k)R^{1,j}(b(0) - w, w)\log(b(0) - w + i\eps) \\
        &+ B^{N_0-j_0}(v, w, k)\log(b(0) - w + i\eps) + R^{0, N_0-j_0+1}(v, w).
\end{split}
\end{equation}
In addition, $\beta^{N_0-j_0}_j(v, w, k)$ has the $\mathcal{F}^{b0}_{k,\eps}(\Phi^{b0}_k, N_0-j_0-j+1, N_0 + 1)$ type singularity structure; $\gamma^{N_0-j_0}_j(v, w, k)$ and $\zeta^{N_0-j_0}_j(v, w, k)$ have the $\mathcal{F}^{b0}_{k,\eps}(\Phi^{b0}_k, N_0-j_0-j, N_0 + 1)$ type singularity structure. By (\ref{claim 7}), $R^{0,j}(v, w) \in \mathcal{F}^{b0}_{k,\eps}(g, j+\lambda, N_0 + \lambda)$ for $j \le N_0$.

Note that for $j_0 \ge 0$ and $0 \le j \le N_0 - j_0$, 
\begin{equation}
    (N_0 + 1) - (N_0 - j_0 - j + 1) \ge j_0 -1, \quad (N_0 + \lambda) - (j + \lambda) \ge j_0.
\end{equation}
It follows from Lemma \ref{aux lemma} that for $0 \le j \le N_0 - j_0$, $$\beta^{N_0-j_0}_j(v,w,k)R^{0,j}(b(0) - w, w) \in \mathcal{F}^{b0}_{k,\eps}(g, N_0-j_0+\lambda + 1, N_0 + 1 + \lambda).$$ 
In view of (\ref{claim 8}), the same argument holds for $\zeta^{N_0-j_0}_j(v,w,k)R^{1,j}(b(0) - w, w)$. For $\gamma^{N_0-j_0}_j(v,w,k)R^{0,j}(b(0) - w, w)$, we can show that 
$$\gamma^{N_0-j_0}_j(v,w,k)R^{0,j}(b(0) - w, w) \in \mathcal{F}^{b0}_{k,\eps}(g, N_0-j_0+\lambda, N_0 + \lambda ) $$
which is the subset of $\mathcal{F}^{b0}_{k,\eps}(g, N_0-j_0+\lambda + 1, N_0 + \lambda + 1 )$ by Lemma \ref{aux lemma 2}. By the induction assumption, $R^{0, N_0 - j_0 + 1}(v, w)$ has a $\mathcal{F}^{b0}_{k,\eps}(g, N_0-j_0+\lambda + 1, N_0 + \lambda + 1 )$ type singularity structure. Hence we have finished the analysis of $\partial_w R^{0, N_0 - j_0}$.

Next, let us consider $\partial_v R^{0, N_0 - j_0}$. From (\ref{claim 2}), we have 
\begin{equation}\label{eqq: 472}
\begin{split}
        \partial_v R^{0,N_0 - j_0}(v, w) = ~&\sum_{j = 0}^{N_0 - j_0} \alpha^{N_0 - j_0}_j(v,w, k) R^{0, j}(v, w)\log(v+i\eps) \\
        &+ A^{N_0 - j_0}(v, w, k)\log(v+i\eps) + R^{1, N_0 - j_0}(v, w),
\end{split}
\end{equation}
and for any integers $p, q \ge 0$,
\begin{equation}
 |\partial_v^p\partial_w^q \alpha_j^{N_0 - j_0}(v, w, k)| \lesssim k^{N_0 - j_0-j + p + q}.
\end{equation}
We have for $0 \le j \le N_0 - j_0$,
$$\alpha^{N_0 - j_0}_j(v,w, k) R^{0, j}(v, w) \in \mathcal{F}^{b0}_{k,\eps}(g, N_0-j_0+\lambda, N_0 + \lambda ) \subset \mathcal{F}^{b0}_{k,\eps}(g, N_0-j_0+\lambda + 1, N_0 + \lambda + 1 ).$$
Hence we only need to analyze $R^{1, N_0-j_0}(v,w)$. Note that the singularity structure of $\partial_w R^{0, N_0 - j_0}$ implies the singularity structure of $\partial_w R^{1, N_0-j_0}(v,w)$, since from (\ref{eqq: 472}) we can write 
\begin{equation}
\begin{split}
        &\partial_w R^{1, N_0-j_0}(v,w) \\= &~\partial_w \bigg[ \partial_v R^{0,N_0 - j_0}(v, w) - \sum_{j = 0}^{N_0 - j_0} \alpha^{N_0 - j_0}_j(v,w, k) R^{0, j}(v, w)\log(v+i\eps) \\
        &- A^{N_0 - j_0}(v, w, k)\log(v+i\eps)\bigg].
\end{split}
\end{equation}
 Therefore, we only need to analyze $\partial_v R^{1, N_0-j_0}(v,w)$. It follows from (\ref{claim 4}) that
\begin{equation}
\begin{split}
    &\partial_v R^{1, N_0 - j_0}(v, w)\\ = &\sum_{j = 0}^{N_0 - j_0} \big[g^{1, N_0 - j_0}_j(v, w)R^{0, j}(v, w) + h^{1, N_0 - j_0}_j(v, w)\partial_vR^{0, j}(v, w)\big]\log(v+i\eps) \\
    &+\sum_{j\in\{1, 2\}} A^{1, N_0 - j_0}_{j}(v, w)\log^j(v+i\eps) + R^{2, N_0 - j_0}(v,w),
\end{split}
\end{equation}
and for any integer $p, q\ge 0$, 
\begin{equation}
        |\partial_v^p\partial_w^q g^{1, N_0 - j_0}_j(v, w)| \lesssim k^{1 + N_0 - j_0 -j+p+q}
    \end{equation}
and
\begin{equation}
      |\partial_v^p\partial_w^q h^{1, N_0 - j_0}_j(v, w)| \lesssim k^{N_0 - j_0-j+p+q}.
\end{equation}
It follows from Lemma \ref{aux lemma 2} that both $g^{1, N_0 - j_0}_j(v, w)R^{0, j}(v, w)$ and $h^{1, N_0 - j_0}_j(v, w)R^{1, j}(v, w)$ are in $\mathcal{F}^{b0}_{1, N_0 - j_0, k,\eps}(g, N_0-j_0+\lambda + 2, N_0 + \lambda + 1 )$. Hence we only need to analyze $\partial_v R^{2, N_0 - j_0}(v, w)$. This procedure only need to be repeated finite times, since we have $R^{j_0 + 1, N_0 - j_0}(v, w) \in \mathcal{F}^{b0}_{j_0 + 1, N_0 - j_0, k,\eps}(g, N_0 + \lambda + 1,  N_0 +  \lambda + 1 )$ by (\ref{claim 1}). 

Therefore, we have showed that $R^{0, N_0 - j_0}(v, w)$ has the $\mathcal{F}^{b0}_{k,\eps}(g, N_0-j_0+ \lambda, N_0 + \lambda +1 )$ type singularity structure, which is the case $j = j_0 + 1$ in the induction argument. Hence $\Theta^1_{k,\eps}(v, w) = R^{0,0}(v, w)$ has the $\mathcal{F}^{b0}_{k,\eps}(g, \lambda, N_0 + \lambda + 1 )$ type singularity structure which is the case $N = N_0 + 1$ for the theorem. We next show that (\ref{claim 1}) to (\ref{claim 6}) are correct for all $0 \le n \le N_0$.

For the rest of the proof, we show that for $0 \le n \le N_0 + 1$ and $0 \le m \le N_0 + 1 -m$, there exist functions $R^{m,n}(v, w)$ such that (\ref{claim 1}) to (\ref{claim 8}) hold.

{\bf Step 2: iterative construction of $R^{0, n}(v, w)$ for $0 \le n \le N_0 $}. Set $$R^{0,0}(v, w) = \Theta^1_{k,\eps}(v, w).$$ We claim that for $0 \le n \le N_0$ and $0 \le p \le n - 1$, there exist smooth functions $a^{n,p}_k(v,v',w)$, $b^{n,p}_k(v,v',w)$, $c^{n,p}_k(v,v',w)$ and $d^{n,p}_k(v,v',w)$ such that the solution $R^{0,n}(v, w)$ to the following system satisfying (\ref{claim 6}) and (\ref{claim 7}).

\begin{equation}\label{eqq 497}\tag{IA 1}
    R^{0, n}(v, w) + T_{k,\eps}(R^{0, n}(v, w)) = \mathcal{R}^{0, n}_1(v, w) + \mathcal{R}^{0, n}_2(v, w) + \mathcal{R}^{0, n}_3(v, w) + \mathcal{R}^{0, n}_4(v, w),
\end{equation}
and the right-hand side of (\ref{eqq 497}) has the following expression.
\begin{enumerate}
    \item $\mathcal{R}^{0, n}_1(v, w)$ is from the singularity structure of $\Gamma(v, w)$ and 
    \begin{equation}\tag{IA 2}
        \mathcal{R}^{0, n}_1(v, w) \in \mathcal{F}^{b0}_{0, n, k,\eps}(g, n+\lambda, N_0 + \lambda + 1 ).
    \end{equation}
\item $\mathcal{R}^{0, n}_2(v, w)$:
\begin{equation}\label{eqqq 479}\tag{IA 3}
    \mathcal{R}^{0, n}_2(v, w) = - \sum_{j = 1}^{n} \binom{n}{j} \int_\R \partial_w^j \big[(\G^{fr}_k + \G^{b1}_k)\partial_{v'}B(v'+w)\big]\frac{R^{0, n-j}(v', w)}{v'+ i\eps}dv'.
\end{equation}
\item $\mathcal{R}^{0, n}_3(v, w)$:
\begin{equation}\label{eqq 4100}\tag{IA 4}
    \begin{split}
        &\mathcal{R}^{0, n}_3(v, w) = \int_\R e^{-k|\ib(v'+w)|}\sign(v'+w - b(0))\log(v'+ i\eps) \times \\
    &\bigg[\sum_{p = 0}^{n-1}a_k^{n,p}(v, v', w)R^{0, p}(v', w) + \sum_{p = 0}^{n-1}b_k^{n, p}(v, v', w)\partial_{v'}R^{0,p}(v', w)\bigg]dv'
    \end{split}
\end{equation}
where the functions $a_{k}^{n, p}(v, v', w)$ and $b_k^{n, p}(v, v', w)$ in (\ref{eqq 4100}) are smooth functions depending only on the cutoff function $\Psi_k$ and background flow $B(v+w)$ such that for any integers $\alpha, \beta \ge 0$,
\begin{equation}\tag{IA 5}\label{IA 5}
    |\partial_v^\alpha\partial_w^\beta a_{k}^{n,p}(v, v', w)| \lesssim k^{n-p+ \alpha + \beta}, \quad |\partial_v^\alpha \partial_w^\beta b_{k}^{n, p}(v, v', w)| \lesssim k^{n-p-j - 1 + \alpha + \beta}.
\end{equation}
\item $\mathcal{R}^{0, n}_4(v, w)$:
\begin{equation}\label{eqq 4102}\tag{IA 6}
    \begin{split}
        \mathcal{R}^{0, n}_4(v, w) = &\int_\R e^{-k|\ib(v'+w)|}\log(v'+ i\eps) \times 
    \bigg[\sum_{p = 0}^{n-1}c_k^{n, p}(v, v', w)R^{0, p}(v', w) \\&+ \sum_{p = 0}^{n-1}d_k^{n, p}(v, v', w)\partial_{v'}R^{0,p}(v', w)\bigg]dv'
    \end{split}
\end{equation}
where the functions $c_{k}^p(v, v', w)$ and $d_k^{j, p}(v, v', w)$ in (\ref{eqq 4102}) are smooth functions such that for any integers $\alpha, \beta \ge 0$,
\begin{equation}\label{IA 7}\tag{IA 7}
    |\partial_v^\alpha \partial_w^\beta c_{k}^p(v, v', w)| \lesssim k^{n-p+\alpha + \beta} \quad \text{and} \quad |\partial_v^\alpha \partial_w^\beta d_{k}^{ p}(v)| \lesssim k^{n-p-1 + \alpha + \beta}.
\end{equation}
\end{enumerate}
%In addition, for $0 \le n \le N_0 + 1$, we have the  estimate
%\begin{equation}\tag{IA 8}\label{IA 8}
%    \norm{R^{0,n}(v,w)}_{L^2_w H^1_{k,v}}  \lesssim \norm{g}_{H^{\lambda + n}_k}.
%\end{equation}

For the case $n = 0$, since $R^{0, 0}(v, w)$ solves 
\begin{equation}\label{eq: 491}
    R^{0, 0}(v, w) + T_{k,\eps}R^{0, 0}(v, w) = \Gamma(v, w) \in \mathcal{F}^{b0}_{0, 0, k,\eps}(g, \lambda, N_0 + \lambda + 1 ).
\end{equation}
It suffices to take $a^{0,p}_k(v,v',w)$, $b^{0,p}_k(v,v',w)$, $c^{0,p}_k(v,v',w)$ and $d^{0,p}_k(v,v',w)$ to be all zero. We next study $\partial_w R^{0,0}(v, w)$. Taking a derivative in $w$ to the equation (\ref{eq: 491}), we get
\begin{equation}\label{eq: 492}
\begin{split}
    &(I + T_{k,\eps})\partial_w R^{0,0}(v, w) \\= &~\partial_w \Gamma(v, w) - \int_\R \partial_w\big[\G_k(v+w, v'+w)\partial_{v'}B(v'+w)\big]\frac{R^{0,0}(v', w)}{v'+i\eps}dv'\\
    := &~T_1 + T_2.
\end{split}
\end{equation}
For $T_2$, we have
\begin{equation}
    \begin{split}
        T_2 = &-\int_\R \partial_w\bigg\{(\G_k^{fr} + \G_k^{b1})\partial_{v'}B(v'+w)\bigg\}\frac{R^{0,0}(v',w)}{v'+i\epsilon}dv' \\
        &-\int_\R \partial_w\bigg\{\G^{b0}_k(v+w, v'+w)\partial_{v'}B(v'+w)\bigg\}\frac{R^{0,0}(v',w)}{v'+i\epsilon}dv' \\
        := &T_{21} + T_{22}.
    \end{split}
\end{equation}
For $T_{22}$, we deduce using integration by parts that
\begin{equation}
\begin{split}
        T_{22} = &\Phi^{b0}_k(v+w)\partial_v B(b(0))R^{0,0}(b(0) - w, w)\log(b(0) - w + i\eps) \\
        & + \Phi^{b0}_k(v+w) \int_\R \partial_{v'}\bigg\{e^{-k|\ib(v'+w)|}\partial_{v'}B(v'+w)R^{0,0}(v', w)\bigg\} \\
        & \qquad \qquad \qquad \qquad \qquad \times \sign(v'+w- b(0))\log(v'+i\eps) dv' \\
        &+\partial_w \big(\Phi^{b0}_k(v+w)\big) \int_\R \partial_{v'}\bigg\{ \frac{e^{-k|\ib(v'+w)|}}{k}R^{0,0}(v', w)\partial_{v'}B(v'+w)\bigg\}\\&\qquad \qquad \qquad \qquad \qquad\times\log(v'+i\eps)dv'\\
        := & T_{221} + T_{222} + T_{223}.
\end{split}
\end{equation}
By the assumption that $\Gamma(v, w) \in \mathcal{F}^{b0}_{0,0,k,\eps}(g, \lambda, N_0 + \lambda + 1)$, there exist functions $H^1(v, w)$, $S^1(v, w)$ in $\mathcal{F}^{b0}_{0,1,k,\eps}(g, \lambda + 1, N_0 + \lambda + 1)$ such that
\begin{equation}
    T_1 = \partial_w \Gamma(v, w) = H^1(v,w)\log(b(0) - w + i\eps) + S^1(v, w).
\end{equation}
Set 
\begin{equation}
    \mathcal{B}^1(v, w) = \Phi^{b0}_k(v+w)\partial_v B(b(0))R^{0,0}(b(0) - w, w) + H^1(v, w),
\end{equation}
and
\begin{equation}
    \mathcal{S}^1(v, w) = T_{21} + T_{222} + T_{223} + S^1(v, w).
\end{equation}
It follows from (\ref{eq: 492}) that $\partial_w R^{0,0}(v, w)$ solves 
\begin{equation}
    (I + T_{k,\eps})\partial_w R^{0,0}(v, w) = \mathcal{B}^1(v, w)\log(b(0) - w + i\eps) + \mathcal{S}^1(v, w).
\end{equation}
Take $R^{0, 1}(v, w)$ as 
\begin{equation}\label{eq: 499}
    R^{0, 1}(v, w) = (I + T_{k,\eps})^{-1}\mathcal{S}^1(v, w).
\end{equation}
We have
\begin{equation}
    \begin{split}
        &\partial_w R^{0,0}(v, w) \\= &(I + T_{k,\eps})^{-1}\Phi^{b0}_k(v+w)\partial_vB(b(0))R^{0,0}(b(0) - w, w)\log(b(0) - w +i\eps) \\
        &+ (I + T_{k,\eps})^{-1} H^{1}(v, w)\log(b(0) - w +i\eps) \\
        &+ R^{0,1}(v, w).
    \end{split}
\end{equation}
Take 
\begin{equation}
    \beta^0_0 (v, w, k) = (I + T_{k,\eps})^{-1}\Phi^{b0}_k(v+w)\partial_vB(b(0))
\end{equation}
and
\begin{equation}
    B^0(v, w, k) = (I+T_{k,\eps})^{-1}H^1(v, w).
\end{equation}
Since $\Phi^{b0}_k(v+w)$ is smooth and we have assumed that the theorem holds for $N \le N_0$, we have $\beta^0_0 (v, w, k) \in \mathcal{F}^{b0}_{0,0,k,\eps}(g, 1, N_0 + 1)$. Since $H^1 \in \mathcal{F}^{b0}_{0,1,k,\eps}(g, \lambda + 1, N_0 + \lambda + 1)$, we also have $B^0(v, w, k) \in \mathcal{F}^{b0}_{0,1,k,\eps}(g, \lambda + 1, N_0 + \lambda + 1)$. Therefore, (\ref{claim 6}) holds for $n = 0$.

For $R^{0, 1}(v, w)$ defined in (\ref{eq: 499}), we take
\begin{align}
    a^{1, 0}_k(v, v', w) &= \Phi^{b0}_k(v+w) \partial_{v'}^2B(v'+w) - \partial_w \Phi^{b0}_k(v+w) \frac{\partial_{v'}B(v'+w)}{B(v'+w)}, \\
    b^{1, 0}_k(v, v', w) &= \Phi^{b0}_k(v+w) \partial_{v'}B(v'+w),\\
    c^{1, 0}_k(v, v', w) &= - k\Phi^{b0}_k(v+w)\frac{\partial_{v'}B(v'+w)}{B(v'+w)} + \frac{\partial_w \Phi^{b0}_k(v+w) }{k}\partial_{v'}^2B(v'+w), \\
    d^{1,0}_k(v, v', w) &= \frac{\partial_w \Phi^{b0}_k(v+w) }{k}\partial_{v'}B(v'+w).
\end{align}
We see that (\ref{eqq 497}) to (\ref{IA 7}) holds for $n = 1$. From the induction assumption, we have $R^{0,0}(v, w) \in \mathcal{F}^{b0}_{0,0,k,\eps}(g, \lambda, \lambda + N_0)$. It follows from Lemma \ref{aus lemma 3}, Lemma \ref{lemma 48} and Lemma \ref{lemma 49} that $\mathcal{S}^1(v, w) \in \mathcal{F}^{b0}_{0,0,k,\eps}(g, \lambda + 1, \lambda + N_0)$. Then we have $R^{0, 1}(v, w) \in \mathcal{F}^{b0}_{0,0,k,\eps}(g, \lambda + 1, \lambda + N_0)$ which is (\ref{claim 7}) for the case $n = 1$.

Now assume for some integer $n_0 \ge 1$ we have constructed $R^{0, n}(v, w)$ for all $0\le n \le n_0$ that satisfy (\ref{eqq 497}) to (\ref{IA 7}), and (\ref{claim 6}) holds for all $0\le n \le n_0 - 1$, (\ref{claim 7}) holds for all $0 \le n \le n_0$. We take a derivative in $w$ to the equation (\ref{eqq 497}) for $n = n_0$, and get
\begin{equation}\label{eqqqq 477}
\begin{split}
        &(I + T_{k,\eps})\partial_w R^{0,n_0}(v, w) \\= ~&-\int_\R \partial_w \bigg[\G_k(v+w, v'+w)\partial_{v'}B(v'+w)\bigg] \frac{R^{0, n_0}(v', w)}{v'+i\eps}dv'\\
         & + \partial_w\bigg[ \mathcal{R}^{0, n_0}_1(v, w) + \mathcal{R}^{0, n_0}_2(v, w) + \mathcal{R}^{0, n_0}_3(v, w) + \mathcal{R}^{0, n_0}_4(v, w)\bigg] \\
         := ~&T_3 + T_4 + T_5 + T_6 + T_7.
\end{split}
\end{equation}

For $T_3$, we have
\begin{equation}
    \begin{split}
        T_3 = &-\int_\R \partial_w\bigg\{(\G_k^{fr} + \G_k^{b1})\partial_{v'}B(v'+w)\bigg\}\frac{R^{0,n_0}(v',w)}{v'+i\epsilon}dv' \\
        &-\int_\R \partial_w\bigg\{\G^{b0}_k(v+w, v'+w)\partial_{v'}B(v'+w)\bigg\}\frac{R^{0,n_0}(v',w)}{v'+i\epsilon}dv' \\
        := &T_{31} + T_{32}.
    \end{split}
\end{equation}
By induction assumption of (\ref{claim 7}), $R^{0, n_0} \in \mathcal{F}^{b_0}_{0,n_0,k,\eps}(g, \lambda + n_0, \lambda + N_0)$, we have by Lemma \ref{lemma 48} and \ref{lemma 49}
\begin{equation}
    T_{31} \in \mathcal{F}^{b_0}_{0,n_0,k,\eps}(g, \lambda + n_0 + 1, \lambda + N_0)
\end{equation}
For $T_{32}$, using integration by parts we get 
\begin{equation}\label{eqqq 490}
\begin{split}
       T_{32} = &\Phi^{b0}_k(v+w)\partial_v B(b(0))R^{0, n_0}(b(0) - w, w)\log(b(0) - w + i\eps) \\
        & + \Phi^{b0}_k(v+w) \int_\R \partial_{v'}\bigg\{e^{-k|\ib(v'+w)|}\partial_{v'}B(v'+w)R^{0, n_0}(v', w)\bigg\} \\
        & \qquad \qquad \qquad \qquad \qquad \times \sign(v'+w- b(0))\log(v'+i\eps) dv' \\
        &+\frac{\partial_w \big(\Phi^{b0}_k(v+w)\big)}{k} \int_\R \partial_{v'}\bigg[e^{-k|\ib(v'+w)|}R^{0, n_0}(v', w)\partial_{v'}B(v'+w)\bigg] \\&\qquad \qquad \qquad \qquad \qquad \times \log(v'+i\eps)dv'.
\end{split}
\end{equation}

The analysis of $T_4 = \partial_w \mathcal{R}^{0, n_0}_1(v, w)$ is straightforward since the induction assumption assumes that $\mathcal{R}^{0, n_0}_1(v, w)$ is from the singularity structure of $\Gamma(v, w)$ and has the $\mathcal{F}^{b0}_{k,\eps}(g, \lambda + n_0, \lambda + N_0 + 1 )$ type singularity structure. There exist functions $H^{n_0}(v, w, k)$, $S^{n_0}(v, w, k)$ in $\mathcal{F}^{b0}_{k,\eps}(g, \lambda + n_0 + 1, \lambda + N_0 + 1 )$ such that
\begin{equation}
    T_4 = H^{n_0}(v, w, k)\log(b(0) - w + i\eps) + S^{n_0}(v, w, k).
\end{equation}

For $T_5$, we have
\begin{equation}
\begin{split}
    &\partial_w \mathcal{R}^{0, n_0}_2(v, w) \\ = ~&- \sum_{j = 1}^{n_0} \binom{n_0}{j} \int_\R \partial_w^{j+1} \big[(\G^{fr}_k + \G^{b1}_k)\partial_{v'}B(v'+w)\big]\frac{R^{0, n_0-j}(v', w)}{v'+ i\eps}dv' \\ 
    & - \sum_{j = 1}^{n_0} \binom{n_0}{j} \int_\R \partial_w^{j+1} \big[(\G^{fr}_k + \G^{b1}_k)\partial_{v'}B(v'+w)\big]\frac{\partial_wR^{0, n_0-j}(v', w)}{v'+ i\eps}dv' \\
    := ~&T_{51} + T_{52}.
\end{split}
\end{equation}
From (\ref{claim 6}) for the case $n \le n_0 - 1$, we have
\begin{equation}
\begin{split}
    &\partial_w R^{0, n_0 - j}(v, w) \\= ~&\sum_{p = 0}^{n_0 - j}\beta^{n_0-j}_p(v, w, k)R^{0,p}(b(0) - w, w)\log(b(0) - w + i\eps) \\
    &+ \sum_{p = 0}^{n_0 - j - 1}\gamma^{n_0-j}_p(v, w, k)\partial_vR^{0,p}(b(0) - w, w)\log(b(0) - w + i\eps)\\
    &+ B^{n_0 - j}(v, w)\log(b(0) - w + i\eps) + R^{0, n_0 - j + 1}(v, w).
\end{split}
\end{equation}
In addition, $\beta^{n_0 - j}_p(v, w, k)$ has the $\mathcal{F}^{b0}_{0, 0, k,\eps}(\Phi^{b0}_k, n_0 -j -p + 1, N_0 + 1)$ type singularity structure; $\gamma^{n_0 - j}_p(v, w, k)$ has the $\mathcal{F}^{b0}_{0, 0, k,\eps}(\Phi^{b0}_k, n_0-j_p, N_0 + 1)$ type singularity structure. For $0 \le p \le n_0 - 1$, set
\begin{equation}
\begin{split}
   \widehat{\beta}^{n_0}_p(v, w, k) = &- \sum_{j = 1}^{n_0 - p} \binom{n_0}{j} \int_\R \partial_w^{j+1} \big[(\G^{fr}_k + \G^{b1}_k)\partial_{v'}B(v'+w)\big]\\ &\qquad \qquad \qquad  \times\frac{\beta^{n_0 - j}_p(v', w, k)}{v'+ i\eps}dv', 
\end{split}
\end{equation}
\begin{equation}
\begin{split}
    \widehat{\gamma}^{n_0}_p(v, w, k) = &- \sum_{j = 1}^{n_0 - 1- p} \binom{n_0}{j} \int_\R \partial_w^{j+1} \big[(\G^{fr}_k + \G^{b1}_k)\partial_{v'}B(v'+w)\big]\\ &\qquad \qquad \qquad \quad\times\frac{\gamma^{n_0 - j}_p(v', w, k)}{v'+ i\eps}dv',
\end{split}
\end{equation}
and 
\begin{equation}
\begin{split}
    \widehat{B}^{n_0}(v, w, k) = &- \sum_{j = 1}^{n_0} \binom{n_0}{j} \int_\R \partial_w^{j+1} \big[(\G^{fr}_k + \G^{b1}_k)\partial_{v'}B(v'+w)\big]\\ &\qquad \qquad \qquad  \times\frac{B^{n_0 - j}(v', w, k)}{v'+ i\eps}dv'.
\end{split}
\end{equation}
We have $\widehat{\beta}^{n_0}_p(v, w, k)$ has the $\mathcal{F}^{b0}_{0, 0, k,\eps}(\Phi^{b0}_k, n_0 -p + 1, N_0 + 1)$ type singularity structure; $\widehat{\gamma}^{n_0}_p(v, w, k)$ has the $\mathcal{F}^{b0}_{0, 0, k,\eps}(\Phi^{b0}_k, n_0 - p, N_0 + 1)$ type singularity structure; $\widehat{B}^{n_0}(v, w, k)$ has the $\mathcal{F}^{b0}_{0, 0, k,\eps}(g, \lambda + n_0 + 1, \lambda + N_0 + 1)$ type singularity structure. In addition,
\begin{equation}
\begin{split}
     T_{52} = &\sum_{p = 0}^{n_0 - 1}\widehat{\beta}^{n_0}_p(v, w, k)R^{0,p}(b(0) - w, w)\log(b(0) - w + i\eps) \\
     &+\sum_{p=0}^{n_0 - 2}\widehat{\gamma}^{n_0}_p(v, w, k)\partial_vR^{0,p}(b(0) - w, w)\log(b(0) - w + i\eps) \\
     &+ \widehat{B}^{n_0}(v, w, k) \log(b(0) - w + i\eps) \\
     &- \sum_{j = 1}^{n_0} \binom{n_0}{j} \int_\R \partial_w^{j+1} \big[(\G^{fr}_k + \G^{b1}_k)\partial_{v'}B(v'+w)\big]\frac{R^{0, n_0-j + 1}(v', w)}{v'+ i\eps}dv'\\
     := & T_{521} + T_{522} + T_{523} + T_{524}.
\end{split}
\end{equation}

Take 
\begin{equation}
\begin{split}
    \mathcal{R}^{0, n_0 + 1}_2(v, w) = T_{31} + T_{51} + T_{524}.
\end{split}
\end{equation}

For $T_6 = \partial_w \mathcal{R}^{0, n_0}_3(v, w)$, in view of (\ref{eqq 4100}), we have
\begin{equation}\label{eqqq 492}
\begin{split}
        T_6 = ~&2\log(b(0) - w + i\eps)\sum_{p = 0}^{n_0-1}a_k^{n_0, p}(v, b(0) - w, w)R^{0, p}(b(0) - w, w) \\
        &+ 2\log(b(0) - w + i\eps)\sum_{p = 0}^{n_0 - 1}b_k^{n_0, p}(v, b(0) - w, w)\partial_vR^{0,p}(b(0) - w, w) \\
        &-k \int_\R e^{-k|\ib(v'+w)|}\frac{\log(v'+ i\eps)}{B(v'+w)} \times \bigg[\sum_{p = 0}^{n_0-1}a_k^{n_0, p}(v, v', w)R^{0, p}(v', w) 
        \\ & \qquad \qquad \qquad + \sum_{p = 0}^{n_0 - 1}b_k^{n_0, p}(v, v', w)\partial_{v'}R^{0,p}(v', w)\bigg]dv' \\
        &+\int_\R e^{-k|\ib(v'+w)|}\sign(v'+w - b(0))\log(v'+ i\eps) \times \partial_w\bigg[\\
        &\sum_{p = 0}^{n_0-1}a_k^{n_0, p}(v, v', w)R^{0, p}(v', w)
        + \sum_{p = 0}^{n_0 - 1}b_k^{n_0, p}(v, v', w)\partial_{v'}R^{0,p}(v', w)\bigg]dv'\\
        := ~& T_{61} + T_{62} + T_{63} + T_{64}.
\end{split}
\end{equation}

For $T_{64}$, we have
\begin{equation}
\begin{split}
   T_{64} = &\int_\R e^{-k|\ib(v'+w)|}\sign(v'+w - b(0))\log(v'+ i\eps) \times\\
        &\sum_{p = 0}^{n_0-1}\bigg[\partial_wa_k^{n_0, p}(v, v', w) R^{0, p}(v', w)
        + \partial_w b_k^{n_0, p}(v, v', w)\partial_{v'}R^{0,p}(v', w)\bigg]dv'\\
        & +\int_\R e^{-k|\ib(v'+w)|}\sign(v'+w - b(0))\log(v'+ i\eps)\times\\
        &\sum_{p = 0}^{n_0-1}\bigg[a_k^{n_0, p}(v, v', w)\partial_w R^{0, p}(v', w)
        + b_k^{n_0, p}(v, v', w)\partial_{v'}\partial_wR^{0,p}(v', w)\bigg]dv'\\
        := &T_{641} + T_{642}.
\end{split}
\end{equation}
Since we assumed that (\ref{claim 6}) holds for all $n \le n_0 - 1$, we have for $p \le n_0 - 1$ 
\begin{equation}
    \begin{split}
        &\partial_v \partial_w R^{0, p}(v, w) \\
        = ~&\partial_v \bigg(~\sum_{j = 0}^{p} \beta^{p}_j(v,w,k)R^{0,j}(b(0) - w, w)\log(b(0) - w + i\eps) \\
        &+ \sum_{j = 0}^{p-1} \gamma^p_j(v,w,k)\partial_vR^{0,j}(b(0) - w, w)\log(b(0) - w + i\eps) \\
        &+ B^{p}(v, w, k)\log(b(0) - w + i\eps) + R^{0, p+1}(v, w)\bigg).
\end{split}
\end{equation}
In addition, for $0 \le j \le p$, $\beta^{p}_j(v,w,k) \in \mathcal{F}^{b0}_{0,0, k,\eps}(\Phi^{b0}_k, p-j+1, N_0 + 1)$ and $\gamma^{p}_j(v,w,k) \in \mathcal{F}^{b0}_{0,0, k,\eps}(\Phi^{b0}_k, p-j, N_0 + 1)$. $B^p(v, w, k)$ is from the singularity structure of $\Gamma(v, w)$ and $B^p(v, w, k) \in \mathcal{F}^{b0}_{0,p, k,\eps}(g, \lambda + p, \lambda+ N_0 + 1)$. For $0 \le j \le n_0 - 1$, define
\begin{equation}
\begin{split}
        \widetilde{\beta}^{n_0}_j(v,w,k) = &\sum_{p = j}^{n_0 - 1}\int_\R e^{-k|\ib(v'+w)|}\sign(v'+w - b(0))\log(v'+ i\eps)\times \\
        &\bigg[a^{n_0, p}_k(v, v', w)\beta^p_j(v', w, k) + b^{n_0, p}_k(v,v',w)\partial_{v'}\beta^p_j(v',w,k)\bigg]dv',
\end{split}
\end{equation}
\begin{equation}
\begin{split}
        \widetilde{\gamma}^{n_0}_j(v,w,k) = &\sum_{p = j+1}^{n_0 - 1}\int_\R e^{-k|\ib(v'+w)|}\sign(v'+w - b(0))\log(v'+ i\eps)\times \\
        &\bigg[a^{n_0, p}_k(v, v', w)\gamma^p_j(v', w, k) + b^{n_0, p}_k(v,v',w)\partial_{v'}\gamma^p_j(v',w,k)\bigg]dv',
\end{split}
\end{equation}
and
\begin{equation}
\begin{split}
        \widetilde{B}^{n_0}(v,w,k) = &\sum_{p = 0}^{n_0 - 1}\int_\R e^{-k|\ib(v'+w)|}\sign(v'+w - b(0))\log(v'+ i\eps)\times \\
        &\bigg[a^{n_0, p}_k(v, v', w)B^p(v', w, k) + b^{n_0, p}_k(v,v',w)\partial_{v'}B^p(v',w,k)\bigg]dv'.
\end{split}
\end{equation}

It then follows from Lemma \ref{aus lemma 3} that $\widetilde{\beta}^{n_0}_j(v, w, k) \in \mathcal{F}^{b0}_{0,0, k,\eps}(\Phi^{b0}_k, n_0 - j + 1, N_0 + 1)$, $\widetilde{\gamma}^{n_0}_j(v, w, k) \in \mathcal{F}^{b0}_{0,0, k,\eps}(\Phi^{b0}_k, n_0 - j, N_0 + 1)$ and $\widetilde{B}^{n_0}_j(v,w,k) \in \mathcal{F}^{b0}_{0,0, k,\eps}(g, n_0 + \lambda + 1, N_0 + \lambda + 1)$, and $T_{642}$ can be written as
\begin{equation}
    \begin{split}
        T_{642} = &\sum_{j = 0}^{n_0-1} \widetilde{\beta}^{n_0}_j(v,w,k)R^{0,j}(b(0) - w, w)\log(b(0) - w + i\eps) \\
        &+ \sum_{j = 0}^{n_0-2} \widetilde{\gamma}^{n_0}_j(v,w,k) \partial_vR^{0, j}(b(0) - w, w)\log(b(0) - w + i\eps) \\
        &+\widetilde{B}^{n_0}(v, w, k)\log(b(0) - w + i\eps)\\
        &+\int_\R e^{-k|\ib(v'+w)|}\sign(v'+w - b(0))\log(v'+ i\eps)\\
        &\sum_{p = 0}^{n_0-1}\bigg[a_k^{n_0, p}(v, v', w)R^{0, p+1}(v', w)
        + b_k^{n_0, p}(v, v', w)\partial_{v'}R^{0,p+1}(v', w)\bigg]dv'.
    \end{split}
\end{equation}

For $T_7$, in view of (\ref{eqq 4102}), we have
\begin{equation}\label{eqqq 493}
\begin{split}
        T_7 = ~&\partial_w \mathcal{R}^{0, n}_4(v, w) \\
= ~& -k\int_\R e^{-k|\ib(v'+w)|}\sign(v'+w-b(0))\frac{\log(v'+ i\eps)}{B(v'+w)} \times 
    \bigg[\\& \sum_{p = 0}^{n_0-1}c_k^{n_0, p}(v, v', w)R^{0, p}(v', w) + \sum_{p = 0}^{n_0 - 1}d_k^{n_0, p}(v, v', w)\partial_{v'}R^{0,p}(v', w)\bigg]dv'\\
    &+ \int_\R e^{-k|\ib(v'+w)|}\log(v'+ i\eps) \times 
    \partial_w\bigg[\sum_{p = 1}^{n_0-1}c_k^{n_0, p}(v, v', w)R^{0, p}(v', w) \\&+ \sum_{p = 0}^{n_0 - 1}d_k^{n_0, p}(v, v', w)\partial_{v'}R^{0,p}(v', w)\bigg]dv'.
\end{split}
\end{equation}
The analysis of $T_7$ is similar to $T_{63}$ and $T_{64}$ above.

Set 
\begin{equation}
\begin{split}
        \mathcal{B}^{n_0}(v, w, k) = &\Phi^{b0}_k(v+w)\partial_v B(b(0))R^{0, n_0}(b(0) - w, w) \\
        &+2\sum_{p = 0}^{n_0-1}a_k^{n_0, p}(v, b(0) - w, w)R^{0, p}(b(0) - w, w) \\
        &+ 2\sum_{p = 0}^{n_0 - 1}b_k^{n_0, p}(v, b(0) - w, w)\partial_vR^{0,p}(b(0) - w, w)\\
        &+\sum_{p = 0}^{n_0 - 1}\big[\widehat{\beta}^{n_0}_p(v, w, k) + \widetilde{\beta}^{n_0}_p(v, w, k)\big]R^{0,p}(b(0) - w, w) \\
     &+\sum_{p=0}^{n_0 - 2}\big[\widehat{\gamma}^{n_0}_p(v, w, k)+ \widetilde{\gamma}^{n_0}_p(v, w, k)\big]\partial_vR^{0,p}(b(0) - w, w) \\
     &+ \widehat{B}^{n_0}(v, w, k) + \widetilde{B}^{n_0}(v, w, k) + H^{n_0}(v, w, k),
\end{split}
\end{equation}
and
\begin{align}
    \mathcal{R}^{0, n_0+1}_1(v, w) &= \mathcal{S}^{n_0}(v,w,k) \\
    \mathcal{R}^{0, n_0+1}_2(v, w) &= T_{31} + T_{51} + T_{524}.
\end{align}
Furthermore, from the analysis of $T_{32}$, $T_6$ and $T_7$, we define $\mathcal{R}^{0, n_0+1}_3(v, w)$ and $\mathcal{R}^{0, n_0+1}_4(v, w)$ by setting for $0 \le p \le n_0 -1$
\begin{align*}
    a^{n_0 + 1, p}_k(v, v', w) &= \partial_w a_k^{n_0, p}(v, v', w) + a_k^{n_0, p-1}(v,v',w) - \frac{kc^{n_0, p}_k(v,v',w)}{B(v'+w)} \\
    b^{n_0 + 1, p}_k(v, v', w) &= \partial_w b_k^{n_0, p}(v, v', w) + b_k^{n_0, p-1}(v,v',w) - \frac{kd^{n_0, p}_k(v,v',w)}{B(v'+w)} \\
    c^{n_0 + 1, p}_k(v,v',w) &= \partial_w c_k^{n_0, p}(v, v', w) + c_k^{n_0, p-1}(v,v',w) - \frac{ka^{n_0, p}_k(v,v',w)}{B(v'+w)} \\
    d^{n_0 + 1, p}_k(v,v',w) &= \partial_w d_k^{n_0, p}(v, v', w) + d_k^{n_0, p-1}(v,v',w) - \frac{kb^{n_0, p}_k(v,v',w)}{B(v'+w)},
\end{align*}
and for $p = n_0$
\begin{align*}
    a^{n_0 + 1, n_0}_k(v, v', w) = &\Phi^{b0}_k(v+w)\partial_{v'}^2B(v'+w) - \partial_w \Phi^{b0}_k(v+w) \frac{\partial_{v'}B(v'+w)}{B(v'+w)} \\
    & +a^{n_0 , n_0 -1}_k(v, v', w), \notag\\
    b^{n_0 + 1, n_0}_k(v, v', w) = &\Phi^{b0}_k(v+w)\partial_{v'}B(v'+w) +b^{n_0, n_0 -1}_k(v, v', w), \\
    c^{n_0 + 1, n_0}_k(v, v', w) = &-k\Phi^{b0}_k(v+w)\frac{\partial_{v'}B(v'+w)}{B(v'+w)} +  \frac{\partial_w\Phi^{b0}_k(v+w)}{k}\partial^2_{v'}B(v'+w)\\
    &+c^{n_0, n_0 - 1}, \notag \\
    d^{n_0 + 1, n_0}_k(v, v', w) = &\frac{\partial_w\Phi^{b0}_k(v+w)}{k}\partial_{v'}B(v'+w) + d^{n_0, n_0 -1}_k(v, v', w).
\end{align*}
It follows from the induction assumption and Lemma \ref{aus lemma 3}, Lemma \ref{lemma 48} and Lemma \ref{lemma 49} that for $1 \le j \le 4$, $\mathcal{R}^{0,n_0+1}_j(v, w)\in \mathcal{F}^{b_0}_{0, n_0 + 1, k,\eps}(g, \lambda + n_0 + 1, \lambda + N_0)$.
We then define 
\begin{equation}
    B^{n_0}(v, w, k) = (I+T_{k,\eps})^{-1}\mathcal{B}^{n_0}(v, w, k)
\end{equation}
and
\begin{equation}
    R^{0, n_0 + 1}(v, w) = (I + T_{k,\eps})^{-1}\bigg(\sum_{j=1}^4\mathcal{R}^{0,n_0+1}_j(v, w)\bigg).
\end{equation}
We have 
\begin{equation}
    \partial_w R^{0,n_0}(v,w) = B^{n_0}(v, w, k) \log(b(0) - w + i\eps) + R^{0, n_0 + 1}(v, w).
\end{equation}
We apply the assumption that the theorem holds for $N \le N_0$ and get
\begin{equation}
    R^{0, n_0 + 1}(v, w) \in \mathcal{F}^{b_0}_{0, n_0+1, k,\eps}(g, \lambda + n_0 + 1, \lambda + N_0).
\end{equation}
Therefore, (\ref{claim 6}) holds for the case $n = n_0$ and (\ref{claim 7}) holds for the case $n = n_0 + 1$.

{\bf Step 3: construction of $R^{m,n}(v, w)$ for $1 \le m \le N_0 - n$}. Fix $0 \le n \le N_0$. For $1 \le m \le N_0 - n$, we construct $R^{m, n}(v, w)$ in the following way.
\begin{equation}\tag{IB 1}\label{IB 1}
    R^{m,n}(v, w) = \Omega^{m,n}_1(v, w) + \Omega^{m,n}_2(v, w) + \Omega^{m,n}_3(v, w) + \Omega^{m,n}_4(v, w)
\end{equation}
and the terms in (\ref{IB 1}) have the following expression.
\begin{enumerate}
    \item $\Omega^{m,n}_1(v, w)$ is from the singularity structure of $\Gamma(v, w)$, and $\Omega^{m,n}_1(v, w)$ has the $\mathcal{F}^{b0}_{k,\eps}(g, m+n+\lambda, N_0+\lambda + 1)$ type singularity structure.
    \item Recall that $\mathcal{R}^{0, n}_3(v, w)$ and $\mathcal{R}^{0,n}_4(v, w)$ are defined in (\ref{eqqq 479}) and (\ref{eqq 4100}). We have
    \begin{equation}\tag{IB 2}
    \begin{split}
         \Omega^{m,n}_2(v, w)= ~&\partial_v^m \big[\mathcal{R}^{0, n}_3(v, w) + \mathcal{R}^{0, n}_4(v, w)\big] \\
         & - \partial_v^m \int_\R \G^b_k(v+w, v'+w)\frac{\partial_{v'}B(v'+w)R^{0,n}(v', w)}{v'+i\eps}dv' \\
         &- \sum_{j = 1}^{n} \binom{n}{j} \int_\R \partial_v^m\partial_w^j \big[\G^{b1}_k(v+w, v'+w)\partial_{v'}B(v'+w)\big]\\ &\qquad \qquad \qquad \times\frac{R^{0, n-j}(v', w)}{v'+ i\eps}dv'.
    \end{split}
    \end{equation}
    \item For $\Omega^{m,n}_3(v, w)$, given smooth functions $g^{m, n,j}_k(v, v', w)$ and $h^{m, n,j}_k(v, v', w)$ ($0 \le j \le n$),
    \begin{equation}\tag{IB 3}
        \begin{split}
            &\Omega^{m,n}_3(v, w) \\
            = ~&\int_\R e^{-k|\ib(v+w) - \ib(v'+w)|}\sign(v-v')\log(v'+i\eps)\\
            &\times \sum_{j = 0}^n\big[g^{m, n,j}_k(v, v', w)R^{0, j}(v', w) + h^{m, n,j}_k(v, v', w)\partial_{v'}R^{0, j}(v', w)\big]dv'.
        \end{split}
    \end{equation}
    In addition, for any integers $p,q \ge 0$ the coefficient functions satisfy
    \begin{equation}\tag{IB 4}
    \begin{split}
        &|\partial_v^p\partial_w^q g^{m, n,j}_k(v, v', w)| \lesssim k^{m+n-j+p+q}, \\& |\partial_v^p\partial_w^q h^{m, n,j}_k(v, v', w)| \lesssim k^{m+n-j+p+q-1}.
    \end{split}
    \end{equation}
    \item For $\Omega^{m,n}_4(v, w)$, given smooth functions $\sigma^{m, n,j}_k(v, v', w)$ and $\rho^{m, n,j}_k(v, v', w)$ ($0 \le j \le n$),
    \begin{equation}\tag{IB 5}
        \begin{split}
            &\Omega^{m,n}_4(v, w) \\= ~&\int_\R e^{-k|\ib(v+w) - \ib(v'+w)|}\log(v'+i\eps)\\
            &\times \sum_{j = 0}^n\big[\sigma^{m, n,j}_k(v, v', w)R^{0, j}(v', w) + \rho^{m, n,j}_k(v, v', w)\partial_{v'}R^{0, j}(v', w)\big]dv'.
        \end{split}
    \end{equation}
    In addition, for any integers $p,q \ge 0$ the coefficient functions satisfy
    \begin{equation}\tag{IB 6}\label{IB 8}
    \begin{split}
        &|\partial_v^p\partial_w^q \sigma^{m, n,j}_k(v, v', w)| \lesssim k^{m+n-j+1+p+q}, \\&|\partial_v^p\partial_w^q \rho^{m,n,j}_k(v, v', w)| \lesssim k^{m+n-j+p+q}.
    \end{split}
    \end{equation}
\end{enumerate}

We claim that for $0 \le n \le N_0$ and $1 \le m \le N_0 - m$, there exist smooth functions $g^{m,n,j}_k(v,v',w)$, $h^{m,n,j}_k(v,v',w)$, $\sigma^{m,n,j}_k(v,v',w)$ and $\rho^{m,n,j}_k(v,v',w)$ such that the above system generates $R^{m,n}(v,w)$ and (\ref{claim 2}) to (\ref{claim 5}) holds.

For the case $m = 1$, we take a derivative in $v$ to the equation (\ref{eqq 497}) of $R^{0,n}$, and get
\begin{equation}
    \partial_v R^{0,n}(v, w) = -\partial_v T_{k,\eps}(R^{0, n}) + \partial_v \sum_{j = 1}^4 \mathcal{R}^{0,n}_j(v,w).
\end{equation}

For $-\partial_v T_{k,\eps}(R^{0, n})$, we have
\begin{equation}
\begin{split}
    -\partial_v T_{k,\eps}(R^{0, n}) = &- \partial_v  \int_\R\G^{fr}_k(v+w, v'+w)\frac{\partial_{v'}B(v'+w)R^{0, n}(v', w)}{v'+i\epsilon}dv' \\
            &- \partial_v \int_\R\G^{b}_k(v+w, v'+w)\frac{\partial_{v'}B(v'+w)R^{0,n}(v',w)}{v'+i\epsilon}dv'\\
            := & T_{a} + T_{b}.
\end{split}
\end{equation}
Using integration by parts, we get 
\begin{equation}
    \begin{split}
                &T_{a} \\= &-\frac{2\Psi_k(v+w)\partial_vB(v+w)}{B(v+w)}R^{0,n}(v,w)\log(v+i\eps) \\
                & + \frac{\Psi_k(v+w)}{B(v+w)}\int_\R \bigg\{\partial_{v'}\big[ e^{-k|\ib(v+w) - \ib(v'+w)|}\partial_{v'}B(v'+w)R^{0,n}(v',w)\big] \\
        &\quad \quad \quad \times \sign(v-v')\log(v'+i\eps)\bigg\}dv' \\
        &+\partial_v \Psi_k(v+w)\int_\R \bigg\{\partial_{v'}\big[\frac{ e^{-k|\ib(v+w) - \ib(v'+w)|}}{k}\partial_{v'}B(v'+w)R^{0, n}(v',w)\big] \\
        & \quad \quad \quad \times \log(v'+i\eps)\bigg\}dv'\\
        := &T_{a1} + T_{a2} + T_{a3}.
    \end{split}
    \end{equation}

Since $\mathcal{R}^{0,n}_1(v, w) \in \mathcal{F}^{b0}_{0,n, k,\eps}(g, \lambda + n, \lambda + N_0 + 1)$, there exist functions $X^{0, n}(v, w), Y^{0,n}(v, w) \in \mathcal{F}^{b0}_{1,n, k,\eps}(g, \lambda + n + 1, \lambda + N_0 + 1)$ such that
\begin{equation}
    \partial_v \mathcal{R}_1^{0,n}(v, w) = X^{0, n}(v, w) \log(v+i\eps) + Y^{0,n}(v, w).
\end{equation}

For $\partial_v \mathcal{R}^{0,n}_2(v, w)$, we have
\begin{equation}
\begin{split}
    &\partial_v \mathcal{R}^{0,n}_2(v, w) \\= &- \sum_{j = 1}^{n} \binom{n}{j} \int_\R \partial_w^j \partial_v\big[\G^{fr}_k(v+w, v'+w)\partial_{v'}B(v'+w)\big]\frac{R^{0, n-j}(v', w)}{v'+ i\eps}dv' \\
    &- \sum_{j = 1}^{n} \binom{n}{j} \int_\R \partial_w^j \partial_v\big[\G^{b1}_k(v+w, v'+w)\partial_{v'}B(v'+w)\big]\frac{R^{0, n-j}(v', w)}{v'+ i\eps}dv'\\
    :=& T_{c} + T_{d}.
\end{split}
\end{equation}
Using integration by parts, we have 
\begin{equation}
    \begin{split}
        &T_{c1} \\= &-\sum_{j = 1}^n \binom{n}{j}\partial_w^j\bigg[\frac{2\Psi_k(v+w)\partial_vB(v+w)}{B(v+w)}\bigg]R^{0,n-j}(v,w)\log(v+i\eps) \\
                & + \sum_{j = 1}^n \binom{n}{j}\int_\R \partial_{v'}\bigg[\partial_w^j\bigg(\frac{\Psi_k(v+w)}{B(v+w)} e^{-k|\ib(v+w) - \ib(v'+w)|}\partial_{v'}B(v'+w)\bigg)\\ 
                & \quad \quad \quad \times R^{0,n-j}(v',w)\bigg]\sign(v-v')\log(v'+i\eps)dv' \\
        &+ \sum_{j = 1}^n \binom{n}{j} \int_\R\partial_{v'}\bigg[ \partial_w^j \bigg(\partial_v \Psi_k(v+w)\frac{ e^{-k|\ib(v+w) - \ib(v'+w)|}}{k}\partial_{v'}B(v'+w)\bigg) \\
        & \quad \quad \quad \times R^{0, n-j}(v',w)\bigg] \log(v'+i\eps)dv'\\
        := &T_{c1} + T_{c2} + T_{c3}.
    \end{split}
\end{equation}

Take 
\begin{align}
    &\Omega^{1, n}_1(v, w) = Y^{0,n}(v,w) \\
    &\Omega^{1, n}_2(v, w) = \partial_v(\mathcal{R}^{0, n}_3(v, w) + \mathcal{R}^{0, n}_3(v, w)) + T_b + T_d.
\end{align}
From $T_{a2}$, $T_{a3}$, $T_{c2}$ and $T_{c3}$, we define $\Omega^{1, n}_3(v, w)$ and $\Omega^{1, n}_4(v, w)$ by setting for $0 \le j \le n$
\begin{align*}
    g^{1,n, j}_k(v, w) = &\bigg\{\partial_w^{n-j}\bigg(\frac{\Psi_k(v+w)}{B(v+w)}e^{-k|\ib(v+w) - \ib(v'+w)|}\partial_{v'}^2 B(v'+w)\bigg) \notag\\
    & - \partial_w^{n-j}\bigg(\partial_v \Psi_k(v+w) e^{-k|\ib(v+w) - \ib(v'+w)|} \frac{\partial_{v'}B(v'+w)}{B(v'+w)}\bigg)\bigg\}\notag\\
    &\times \binom{n}{j}e^{k|\ib(v+w) - \ib(v'+w)|},\\
    h^{1,n,j}_k(v, w) = & \partial_w^{n-j}\bigg( \frac{\Psi_k(v+w)}{B(v+w)}e^{-k|\ib(v+w) - \ib(v'+w)|}\partial_{v'} B(v'+w)\bigg)\notag\\
    &\times \binom{n}{j}e^{k|\ib(v+w) - \ib(v'+w)|}, \\
    \sigma^{1, n, j}_k(v, w) = &\bigg\{-k\partial_w^{n-j}\bigg( \frac{\Psi_k(v+w)}{B(v+w)}e^{-k|\ib(v+w) - \ib(v'+w)|}\frac{\partial_{v'} B(v'+w)}{B(v'+w)}\bigg)\notag\\
     &+ \partial_w^{n-j}\bigg(\partial_v \Psi_k(v+w) \frac{e^{-k|\ib(v+w) - \ib(v'+w)|}}{k} \partial^2_{v'}B(v'+w)\bigg)\bigg\}\notag\\
    &\times \binom{n}{j}e^{k|\ib(v+w) - \ib(v'+w)|},\\
    \rho^{1, n, j}_k(v, w) = &\partial_w^{n-j}\bigg(\partial_v \Psi_k(v+w) \frac{e^{-k|\ib(v+w) - \ib(v'+w)|}}{k} \partial_{v'}B(v'+w)\bigg)\notag \\
    &\times \binom{n}{j}e^{k|\ib(v+w) - \ib(v'+w)|}.
\end{align*}

Set $R^{1,n}(v, w)$ as
\begin{equation}
    R^{1,n}(v,w) = \sum_{j = 1}^4 \Omega_j^{1,n}(v,w).
\end{equation}
We see that (\ref{claim 2}) holds for all $0 \le n \le N_0$ with 
\begin{equation}
    \alpha_j^n(v, w, k) = \binom{n}{j}\partial_w^{n - j}\bigg[\frac{2\Psi_k(v+w)\partial_vB(v+w)}{B(v+w)}\bigg].
\end{equation}
for $0 \le j \le n.$

Assume for some integer $m_0 \ge 1$, we have constructed functions $R^{m, n}(v, w)$ for $m \le m_0$, and (\ref{claim 2}) to (\ref{claim 5}) holds for $m = m_0 - 1$. Taking a derivative in $v$ to $R^{m_0, n}$, we get
\begin{equation}
    \partial_v R^{m_0, n} = \partial_v \big[\Omega^{m_0,n}_1(v, w) + \Omega^{m_0,n}_2(v, w) + \Omega^{m_0,n}_3(v, w) + \Omega^{m_0,n}_4(v, w)\big]
\end{equation}
We only need to analyze $\partial_v \Omega^{m_0,n}_3(v, w)$ and $\partial_v \Omega^{m_0,n}_3(v, w)$. It turns out the we can define $R^{m_0+1, n}(v, w)$ by setting
\begin{align}
    g^{m_0+1, n,j}_k(v,v',w) &= \partial_v g^{m_0,n,j}_k(v,v',w) -k\frac{\sigma_k^{m_0, n, j}(v, v', w)}{B(v+w)}, \\
    h^{m_0+1, n,j}_k(v,v',w) &= \partial_v h^{m_0, n, j}_{k}(v,v',w),\\
    \sigma^{m_0+1, n,j}_k(v,v',w) &= -k\frac{g^{m_0,n,j}_k(v,v',w)}{B(v+w)} + \partial_v\sigma_k^{m_0, n, j}(v, v', w), \\
    \rho^{m_0+1, n,j}_k(v,v',w) &= \partial_v\rho^{m_0, n,j}_k(v,v',w).
\end{align}
Then (\ref{claim 4}) and (\ref{claim 5}) holds with 
\begin{equation}
    \tau^{m_0, n}_j(v, w) = 2g^{m_0, n,j}_k(v, w), \quad \eta^{m_0, n}_j(v, w) = 2h^{m_0, n, j}_k(v,w).
\end{equation}

{\bf Step 4: estimate of $R^{m,n}$ for $0\le n \le N_0 + 1$, $m = N_0+1-m$}. In order to complete the proof, we only need to prove (\ref{claim 1}) for $0\le n \le N_0 + 1$, $m = N_0+1-m$ under the assumption that $\Gamma(v, w) \in \mathcal{F}_{0,0,k,\eps}^{b0}(g, \lambda, \lambda + N_0 + 1)$. We first show that 
\begin{equation}
    \norm{R^{1, N_0}(v,w)}_{L^2_wH^1_{k,v}} \lesssim \norm{g}_{H^{\lambda + N_0 + 1}_k}.
\end{equation}
By (\ref{claim 7}), for $0 \le n \le N_0$ we have
\begin{equation}
    \norm{R^{0, n}(v,w)}_{L^2_wH^1_{k,v}} \lesssim \norm{g}_{H^{\lambda + n}_k}.
\end{equation}
It follows from (\ref{claim 2}) that $R^{1, N_0} \in L^2_vL^2_w$ is defined and is given by (\ref{IB 1}) for $m = 1$ and $N = N_0$. We have 
\begin{equation}
    \norm{R^{1, N_0}(v,w)}_{L^2_wL^2_v} \lesssim \sum_{j = 1}^4 \norm{\Omega^{1,N_0}_j(v,w)}_{L^2_wL^2_v} \lesssim \frac{1}{k}\norm{g}_{H^{\lambda + N_0+1}}.
\end{equation}
Differentiate $R^{1, N_0}(v, w)$ in $v$ and we get
\begin{equation}\label{eq: 4165}
    \partial_v R^{1, N_0}(v, w) = 2h^{1,N_0, N_0}_k(v,v,w) \partial_v R^{0, N_0}\log(v+i\eps) + R(v, w),
\end{equation}
where
\begin{equation}
    \norm{R(v,w)}_{L^2_v L^2_w} \lesssim \norm{g}_{H^{\lambda + N_0 + 1}_k}.
\end{equation}
By (\ref{claim 2}), there exists a function $M(v, w) \in L^2_w H^{1}_{k, v}$ such that
\begin{equation}
    \norm{M(v, w)}_{L^2_w H^1_{k,v}} \lesssim \norm{g}_{H^{\lambda + N_0 + 1}_k},
\end{equation}
and 
\begin{equation}
    \partial_v R^{0, N_0}(v, w) = M(v, w)\log(v+i\eps) + R^{1, N_0}(v, w).
\end{equation}
From (\ref{eq: 4165}), we deduce that $R^{1, N_0}$ sovles the following equation
\begin{equation}
\begin{split}
    \partial_v R^{1, N_0}(v, w) = &\big[2h^{1,N_0, N_0}_k(v,v,w) \log(v+i\eps)\big]R^{1, N_0}(v, w) \\
    &+ 2h^{1,N_0, N_0}_k(v,v,w)M(v, w) \log^2(v+i\eps) + R(v, w).
\end{split}
\end{equation}
Since $|h^{1,N_0, N_0}_k| \lesssim 1$, we have
\begin{equation}
\begin{split}
    &\norm{\partial_v R^{1, N_0}(v, w)}_{L^2_wL^2_v} \\ \lesssim &\norm{2h^{1,N_0, N_0}_k(v,v,w)M(v, w) \log^2(v+i\eps) + R(v, w)}_{L^2_wL^2_v} \\
    \lesssim &\norm{g}_{H^{\lambda + N_0 + 1}_k}.
\end{split}
\end{equation}
Therefore, 
\begin{equation}
    \norm{R^{1, N_0}(v, w)}_{L^2_wH^{1}_{k,v}} \lesssim \norm{g}_{H^{\lambda + N_0 + 1}_k}.
\end{equation}
Then by (\ref{eqq 497}) to (\ref{IA 7}), $R^{0, N_0 + 1}(v,w)$ is also defined and 
\begin{equation}
    \norm{R^{0, N_0 + 1}(v,w)}_{L^2_wH^1_{k,v}} \lesssim \norm{g}_{H^{\lambda + N_0 + 1}_k}.
\end{equation}
The estimate of $R^{m,n}(v,w)$ for $m = N_0 + 1 - n$ and $0 \le n \le N_0 - 1$ follows directly from (\ref{IB 1}) to (\ref{IB 8}) and estimates of $R^{0, p}(v, w)$ and $R^{1, p}(v, w)$, $0 \le p \le n$.
\end{proof}

\begin{theorem}\label{thm 410}
    For any $k \ge 1$, $N \ge 1$, $0 < \eps < 1$, $\iota \in \{+, -\}$, assume $f_0^k \in H^N_k(\R)$ and $\Theta^\iota_{k, \eps}(v, w)$ solves
    \begin{equation}
		\begin{split}
			&\Theta^\iota_{k,\epsilon}(v,w) + \int_\R\G_k(v+w, v'+w)\frac{\partial_{v'}B(v'+w)\Theta^\iota_{k,\epsilon}(v',w)}{v'+i\iota\epsilon}dv' \\
			= &\int_\R\G_k(v+w, v'+w)\frac{1}{B(v'+w)}\frac{f^k_0(v'+w)}{v'+i\iota\epsilon}dv'.
		\end{split}
	\end{equation}
 Then $\Theta^{2, \iota}_{k,\epsilon}(v,w) = \Theta^\iota_{k,\epsilon}(v,w) \Upsilon_2(w)$ has the $\mathcal{F}^{in}_{0, k,\eps}(f_0^k, 1, N)$ type singularity structure.
\end{theorem}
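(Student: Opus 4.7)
The proof follows the same iterative strategy as Theorem \ref{thm 49}, but is considerably simpler because the cutoff $\Upsilon_2(w)$, whose support lies in $[b(\delta_0), b(1-\delta_0)]$, eliminates the entire hierarchy of $w$-logarithmic singularities. Since $T_{k,\eps}$ integrates only in the $v'$ variable, multiplying the equation for $\Theta^\iota_{k,\eps}$ by $\Upsilon_2(w)$ yields
\begin{equation}
\Theta^{2,\iota}_{k,\eps}(v,w) + T_{k,\eps}(\Theta^{2,\iota}_{k,\eps})(v,w) = \Upsilon_2(w)\int_\R \G_k(v+w,v'+w)\frac{f_0^k(v'+w)}{B(v'+w)(v'+i\iota\eps)}\,dv'.
\end{equation}
The plan is to first show that this right-hand side lies in $\mathcal{F}^{in}_{0,k,\eps}(f_0^k,1,N)$, and then to propagate this singularity structure through $(I+T_{k,\eps})^{-1}$ by induction on $N$.

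\textbf{Source term.} Decomposing $\G_k = \G^{fr}_k + \G^{b_0}_k + \G^{b_1}_k$, Lemma \ref{lemma 41 fr} directly gives that the $\G^{fr}_k$-part produces $\log(v+i\eps)$ singularities of the required type. Lemmas \ref{lemma 42 b0} and \ref{lemma 43 b1} show that the boundary parts $\G^{b_0}_k$ and $\G^{b_1}_k$ only produce $\log(b(0)-w+i\eps)$ and $\log(b(1)-w+i\eps)$ singularities under $w$-differentiation; however, on the support of $\Upsilon_2$ these functions are smooth with derivatives controlled polynomially in $k$, so they contribute only regular-in-$w$ terms. Hence no boundary $w$-logarithm survives, matching the absence of $w$-singularity hierarchy in the definition of $\mathcal{F}^{in}$.

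\textbf{Inductive step.} The base case $N=0$ is just the bound $\|\Theta^{2,\iota}_{k,\eps}\|_{L^2_w H^1_{k,v}} \lesssim \|f_0^k\|_{H^1_k}$, which follows by applying Proposition \ref{lap} pointwise in $w$ together with the $L^2_w$-integrability coming from the $(1+|w|)^{-1}$ decay in Lemma \ref{lemma 31}. For the inductive step, I would construct remainders $R^{m,n}(v,w)$ with $R^{0,0} := \Theta^{2,\iota}_{k,\eps}$ in the spirit of Theorem \ref{thm 49}. The $R^{0,n}$ are obtained by $n$-fold $w$-differentiation of the governing equation followed by inversion of $I+T_{k,\eps}$ via Proposition \ref{lap}; crucially, every $w$-derivative of the source or of $\G_k(v+w,v'+w)$ keeps us inside $\mathcal{F}^{in}$ because $\Upsilon_2(w)$ absorbs all boundary-type $w$-logarithms arising from $\G^{b_0}_k$ and $\G^{b_1}_k$. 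The $R^{m,n}$ for $m\ge 1$ are obtained by $v$-differentiation followed by integration by parts exactly as in \eqref{comp 411}, yielding the coefficient $\frac{2\Psi_k(v+w)\partial_v B(v+w)}{B(v+w)}R^{0,n}(v,w)\log(v+i\eps)$, which is precisely the $\log(v+i\eps)$ structure required in the definition of $\mathcal{F}^{in}$.

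\textbf{Main obstacle.} The principal technical point is controlling the powers of $\log(v+i\eps)$ accumulating under repeated $v$-differentiation: once the first integration by parts produces a $\log(v+i\eps)$ coefficient, differentiating that coefficient in turn generates $\log^2(v+i\eps)$ terms in $\partial_v R^{1,n}$. Exactly as in Theorem \ref{thm 49}, the power stabilizes at $\tau(m)=2$ for $m\ge 1$ because subsequent integrations by parts reproduce the same structural multiplier rather than introducing new $\log$ factors. Establishing this stabilization together with the $L^2_w H^1_{k,v}$ bounds on the remainder terms $R^{m,n}$ --- via the limiting absorption principle and the same convolution estimates (Cauchy--Schwarz against the exponential Green's function kernels) used in the proof of Theorem \ref{thm 49} --- is the main content of the argument, and amounts essentially to a simplified version of that earlier proof with the entire $w$-hierarchy of $\log(b(0)-w+i\eps)$ and $\log(b(1)-w+i\eps)$ terms removed by $\Upsilon_2$.
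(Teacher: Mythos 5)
Your proposal is correct and rests on the same key observation the paper uses: on the support of $\Upsilon_2$, the quantities $\log(b(0)-w+i\eps)$ and $\log(b(1)-w+i\eps)$ are smooth, so the $w$-logarithmic hierarchy that distinguishes $\mathcal{F}^{b0}$ from $\mathcal{F}^{in}$ never appears. The paper, however, is far more economical: it does not re-run the iteration at all, but simply cites Proposition \ref{structure of RHS} and Theorem \ref{thm 49} to conclude that $\Theta^{2,\iota}_{k,\eps}$ already has the $\mathcal{F}^{b0}_{0,0,k,\eps}(f_0^k,1,N)$ structure, and then notes that since $\Upsilon_2(w)\log(b(0)-w+i\eps)$ is Schwartz, every $w$-log coefficient can be absorbed into the regular remainder, so the $\mathcal{F}^{b0}$ classification degenerates to $\mathcal{F}^{in}$. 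Your plan to rebuild the remainder hierarchy $R^{m,n}$, track the stabilization of $\log^{\tau(m)}$ powers, and invert $I+T_{k,\eps}$ at each stage is sound but duplicates work already done in Theorem \ref{thm 49}; the cleaner route (and the one the paper takes) is to treat Theorem \ref{thm 49} as a black box and then downgrade the structure using the observation about $\Upsilon_2$.
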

\begin{proof}
    It follows from Proposition \ref{structure of RHS} and Theorem \ref{thm 49} that $\Theta^{2, \iota}_{k,\epsilon}(v,w)$ has the $\mathcal{F}^{b0}_{0, 0, k,\eps}(f_0^k, 1, N)$ type regularity structure. Noting that on the support of $\Upsilon_2(w)$, $w - b(0) > \delta$
    for some $\delta > 0$. Hence $\Upsilon_2(w)\log(b(0) - w + i\eps)$ is a Schwarz function. The definition of $\mathcal{F}^{b0}_{0, 0, k,\eps}(f_0^k, 1, N)$ type singularity structure implies that $\Theta^{2, \iota}_{k,\epsilon}(v,w)$ has the $\mathcal{F}^{in}_{0, k,\eps}(f_0^k, 1, N)$ type singularity structure.
\end{proof}

\begin{remark}
    The major terms in the singularity structure of $\Theta_{k, \eps}^{\iota}$ are explicitly computable. As a matter of fact, one can verify with integration by parts that 
    \begin{equation}
        \partial_v \Theta_{k, \eps}^{\iota} = A^\iota_{k,\eps}(v, w)\log(v+i\eps) + R_{k,\eps}^\iota(v,w)
    \end{equation}
    where $A^\iota_{k,\eps}$ is given explicitly by
    \begin{equation}\label{major term in v}
        \begin{split}
            A^\iota_{k,\eps}(v, w) = \frac{2\partial_vB(v+w)}{B(v+w)}\Theta^\iota_{k,\eps}(v,w) - \frac{2}{B^2(v+w)}f_0^k(v+w).
        \end{split}
    \end{equation}

Taking a derivative in $w$, we have
    \begin{equation}
    \begin{split}
        &\partial_w \Theta_{k, \eps}^{\iota} 
        \\= &D^{0,\iota}_{k,\eps}(v,w)\log(b(0) - w + i\eps) + D^{1,\iota}_{k,\eps}(v,w)\log(b(1) - w + i\eps) + S_{k,\eps}^\iota(v,w), 
    \end{split}
    \end{equation}
    where for $j \ in \{0, 1\}$, 
    \begin{equation}\label{major term in w}
        D^{j,\iota}_{k,\eps}(v,w)=\Phi^{j}_{k,\iota\eps}(v, w)\frac{2f_0^k(b(j))}{(b'(j))^2} - \Phi^{j}_{k,\iota\eps}(v, w)\frac{2b''(j)\Theta_{k,\eps}^\iota(b(j)-w, w)}{b'(j)},
    \end{equation}
    and $\Phi^{j}_{k, \eps}(v, w)$ is defined in Proposition \ref{solve Phi}.
\end{remark}

\begin{proposition}\label{vanish}
    Under the same condition of Theorem \ref{thm 49}, if we further assume that $N \ge 2$ and  $f_0^k(b(0)) = 0$, then $\partial_w \Theta^{1,\iota}_{k,\eps}(v, w)$ has the $\mathcal{F}^{b0}_{k,\eps}(f_0^k, 2, N)$ type singularity structure.
\end{proposition}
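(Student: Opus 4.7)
The strategy is to isolate, via integration by parts, the leading $\log(b(0)-w+i\iota\eps)$ coefficient in $\partial_w\Theta^{1,\iota}_{k,\eps}$, show that it vanishes under the hypothesis $f_0^k(b(0))=0$, and then invoke Theorem~\ref{thm 49} with an upgraded base regularity. Differentiating the equation for $\Theta^{1,\iota}_{k,\eps}$ (namely \eqref{mainequation1} multiplied by $\Upsilon_1(w)$) in $w$ and integrating by parts in $v'$ exactly as in Step~2 of the proof of Theorem~\ref{thm 49}, one arrives at
\begin{equation*}
(I+T_{k,\eps})\,\partial_w\Theta^{1,\iota}_{k,\eps}(v,w) = \mathcal{B}(v,w)\log(b(0)-w+i\iota\eps) + \mathcal{S}(v,w),
\end{equation*}
where $\mathcal{B}(v,w)$ matches the explicit coefficient $D^{0,\iota}_{k,\eps}(v,w)$ from \eqref{major term in w}, and $\mathcal{S}(v,w)$ gathers the interior $\G^{fr}_k$ and far-boundary $\G^{b1}_k$ contributions together with the non-logarithmic pieces left over from the integration by parts. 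By the estimates used in the proof of Theorem~\ref{thm 49} (and Lemmas \ref{lemma 42 b0}, \ref{lemma 43 b1}), each summand of $\mathcal{S}$ lies in $\mathcal{F}^{b0}_{0,0,k,\eps}(f_0^k,2,N)$ as soon as $N\ge 2$, since every integration by parts here costs one derivative of $f_0^k$ relative to the base source in \eqref{mainequation1}.

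The key reduction is the identity $\mathcal{B}\equiv 0$. Writing out $\mathcal{B}$ explicitly,
\begin{equation*}
\mathcal{B}(v,w)=\Phi^{0}_{k,\iota\eps}(v,w)\left[\frac{2\,f_0^k(b(0))}{(b'(0))^2}-\frac{2\,b''(0)\,\Theta^{\iota}_{k,\eps}(b(0)-w,w)}{b'(0)}\right],
\end{equation*}
the first summand disappears by the hypothesis $f_0^k(b(0))=0$. For the second summand, I would exploit the boundary-vanishing identity $\Theta^{\iota}_{k,\eps}(b(0)-w,w)\equiv 0$. This follows from the zero-Dirichlet structure preserved by the extended Green's function: direct evaluation of \eqref{extendexpression} at $y=0$ gives $\sinh(k(1-0))/\sinh k=1$, so the surviving exponential contributions in the bracket cancel exactly and $\G_k(b(0),v')\equiv 0$. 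Substituting $v=b(0)-w$ into \eqref{mainequation1} then collapses both the kernel term and the source on the right-hand side, forcing $\Theta^{\iota}_{k,\eps}(b(0)-w,w)\equiv 0$ for every $w\in\R$.

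With $\mathcal{B}\equiv 0$ the equation reduces to $(I+T_{k,\eps})\,\partial_w\Theta^{1,\iota}_{k,\eps}=\mathcal{S}$ with $\mathcal{S}\in\mathcal{F}^{b0}_{0,0,k,\eps}(f_0^k,2,N)$, and Theorem~\ref{thm 49} applied to this equation (with base index $\alpha=2$) delivers $\partial_w\Theta^{1,\iota}_{k,\eps}\in\mathcal{F}^{b0}_{0,0,k,\eps}(f_0^k,2,N)$, as required. The main obstacle is the vanishing identity $\Theta^{\iota}_{k,\eps}(b(0)-w,w)\equiv 0$: it is the precise statement that the zero-Dirichlet condition at $y=0$ survives both the zero-extension of $G_k$ in $z$ and the $\Psi_k$-truncation that define the extended problem, and without it the second term in $\mathcal{B}$ would still obstruct the regularity gain even after $f_0^k(b(0))=0$. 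Once this identity is secured, the hypothesis $f_0^k(b(0))=0$ cleanly cancels the boundary logarithm and the rest of the argument is automatic from Theorem~\ref{thm 49}.
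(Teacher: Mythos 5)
Your argument is correct and follows essentially the same path as the paper's proof: identify the explicit coefficient $D^{0,\iota}_{k,\eps}$ of $\log(b(0)-w+i\iota\eps)$, observe that $\Theta^\iota_{k,\eps}(b(0)-w,w)\equiv 0$ (the Dirichlet condition surviving the extension), conclude $D^{0,\iota}_{k,\eps}\equiv 0$ under $f_0^k(b(0))=0$, and close by applying Theorem~\ref{thm 49} with upgraded base index $\alpha=2$. The only cosmetic difference is how you justify the vanishing of $\Theta^\iota_{k,\eps}(b(0)-w,w)$ --- you re-derive it from $\G_k(b(0),\cdot)\equiv 0$ and the integral equation, whereas the paper just reads it off from the change of variables $\Theta^\iota_{k,\eps}(b(0)-w,w)=\psi^\iota_{k,\eps}(0,y_0)=0$; these are the same fact.
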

\begin{proof}
Note that under the change of variable $v = b(y)$, $w = b(y_0)$,
$$\Theta_{k,\eps}^\iota(b(0) - w, w) = \psi_{k,\eps}^{\iota}(0, y_0) = 0.$$
Hence when $f_0^k(b(0)) = 0$, we see from (\ref{major term in w}) that the major term $D^{0,\iota}_{k,\eps}(v, w) \equiv 0$. 
Therefore, it follows from Theorem \ref{thm 49} that $\partial_w \Theta^{1,\iota}_{k,\eps}(v, w)$ has the $\mathcal{F}^{b0}_{k,\eps}(f_0^k, 2, N)$ type singularity structure.
\end{proof}

\section{Proof of the main theorem}

\subsection{Technical lemmas}
In order to capture the regularity of different components of the stream function, we need the following lemmas.
\begin{lemma} \label{lemma 54}
	Suppose $h(v, w)$ is supported on $(-L, L)\times (-L, L)$ for some $L > 0$ and $h(v, w)$ is $H^{\frac{1}{2}+\delta_0}$ in $v$ and $H^s$ in $w$ for some $s \ge 0$ and $\delta_0 > 0$. For any non-negative integer $p'$, $\eps > 0$ and $k\in \mathbb{Z}$, set
	\begin{equation}
		g^{p'}_{k, \eps}(t, v) = \int_\R e^{-ikwt}h(v-w, w)\frac{\log^{p'}(v-w+i\eps)}{v-w+i\eps}dw.
	\end{equation}
	Then there exists a function $\alpha_k^{p'}(t, v)$ such that $g^{p'}_{k, \eps}(t, v)$ converges to
	\begin{equation}
		g^{p'}_k(t, v) := e^{-ikvt}\alpha_k^{p'}(t, v).
	\end{equation}
 in distribution as $\eps \to 0+$. In addition, following estimates hold.

 \begin{enumerate}
     \item If $p' = 0$, then 
     \begin{equation}\label{ineqqq 55}
		\norm{\alpha^{0}_{k}(t, \cdot)}_{H^{s}} \lesssim_{\delta, s, \delta_0}\norm{h(v,w)}_{H^{1/2 + \delta_0}_vH^{s}_w}.
	\end{equation}
\item If $p' > 0$, then for any $0 < \delta < \delta_0$ we have
	\begin{equation}
		\norm{\alpha^{p'}_{k}(t, \cdot)}_{H^{s-\delta}} \lesssim_{\delta, s, p', \delta_0}\big|1+\log^{p'}\la kt \ra\big|\norm{h(v,w)}_{H^{1/2 + \delta_0}_vH^{s-\delta/2}_w}.
	\end{equation}
 \end{enumerate}

\end{lemma}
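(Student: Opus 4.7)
The plan is to first change variables, then analyze via Fourier transform in $v$. Setting $u = v - w$, we write
\begin{equation*}
\alpha_k^{p'}(t, v) = \int_\R e^{ikut} h(u, v-u) F_\eps^{p'}(u)\, du, \qquad F_\eps^{p'}(u) := \frac{\log^{p'}(u+i\eps)}{u+i\eps}.
\end{equation*}
Distributional convergence as $\eps \to 0^+$ follows from $F_\eps^{p'} \to F^{p'}$ in $\mathcal{S}'(\R)$, where $F^{p'}$ is defined via the classical Plemelj limits $\log(u+i\eps) \to \log|u| + i\pi H(-u)$ and $(u+i\eps)^{-1} \to \mathrm{p.v.}(1/u) - i\pi\delta(u)$, combined with the Sobolev embedding $H^{1/2+\delta_0}(\R) \hookrightarrow C^{\delta_0}(\R)$, which ensures $u \mapsto e^{ikut}h(u, v-u)$ is a valid test function against $F^{p'}_\eps$.

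For the Sobolev estimates I would Fourier transform in $v$ to obtain the convolution
\begin{equation*}
\widehat{\alpha_k^{p'}(t, \cdot)}(\xi) = \frac{1}{2\pi}\int_\R \widehat{F_\eps^{p'}}(\xi - kt - \eta)\, \hat h(\eta, \xi)\, d\eta,
\end{equation*}
where $\hat h$ is the full two-dimensional Fourier transform. The core estimate is the uniform-in-$\eps$ bound $|\widehat{F_\eps^{p'}}(\zeta)| \lesssim_{p'} 1 + (\log\la\zeta\ra)^{p'}$. For $p'=0$, an explicit computation via $(u+i\eps)^{-1} = -i\int_0^\infty e^{-\eps\tau+iu\tau}\,d\tau$ yields $\widehat{F_\eps^0}(\zeta) = -2\pi i\, e^{-\eps\zeta} H(\zeta)$, which is uniformly bounded. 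For $p'\ge 1$, use the primitive identity $F_\eps^{p'} = (p'+1)^{-1}\partial_u\log^{p'+1}(\cdot+i\eps)$ so that $\widehat{F_\eps^{p'}}(\zeta) = \frac{i\zeta}{p'+1}\widehat{\log^{p'+1}(\cdot+i\eps)}(\zeta)$, and derive the asymptotic $\widehat{\log^{p'+1}(\cdot+i0)}(\zeta) = O(\log^{p'}|\zeta|/|\zeta|)$ for $|\zeta|\ge 1$ by differentiating the formula $\mathcal F[|u|^{-s}](\zeta) = 2\Gamma(1-s)\sin(\pi s/2)|\zeta|^{s-1}$ at $s=0$; the crucial point is that the prefactor $c_s = 2\Gamma(1-s)\sin(\pi s/2)$ vanishes linearly at $s=0$, so $\log^{p'+1}|u|$ contributes only $\log^{p'}|\zeta|/|\zeta|$ at infinity rather than $\log^{p'+1}|\zeta|/|\zeta|$.

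Given the Fourier bound, Cauchy--Schwarz against the integrable weight $(1+|\eta|^2)^{-1/2-\delta_0}$ combined with a split of the $\zeta$-integral into $|\zeta| \le 2|\xi-kt|$ and its complement produces
\begin{equation*}
|\widehat{\alpha_k^{p'}(t,\cdot)}(\xi)|^2 \lesssim \bigl(1 + \log^{p'}\la\xi-kt\ra\bigr)^2 \, \norm{\widetilde h(\cdot, \xi)}_{H^{1/2+\delta_0}}^2.
\end{equation*}
For $p'=0$ there is no log factor, and integration of $(1+|\xi|^2)^s\,d\xi$ gives the stated $H^s$ bound. For $p' \ge 1$, apply the subadditivity $\log^{p'}\la\xi-kt\ra \lesssim \log^{p'}\la\xi\ra + \log^{p'}\la kt\ra$, absorb $\log^{p'}\la\xi\ra$ into a loss of $\delta/2$ derivatives in $w$ via $\log^{p'}\la\xi\ra \lesssim_\delta \la\xi\ra^{\delta/2}$, and extract $\log^{p'}\la kt\ra$ as the claimed prefactor. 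The principal technical obstacle I anticipate is establishing the uniform-in-$\eps$ Fourier bound with the sharp logarithmic power $p'$ rather than $p'+1$; this relies on the cancellation $c_0 = 0$ noted above and must be tracked carefully through the distributional computation for all $\eps \in (0,1)$, not only in the limit.
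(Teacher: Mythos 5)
Your overall strategy matches the paper's: change variables to $u = v-w$, pass to the Fourier side to obtain a convolution formula against $\widehat{F^{p'}_\eps}$, and apply Cauchy--Schwarz with the weight $\la\eta\ra^{-1-2\delta_0}$; the loss of $\delta$ derivatives to absorb $\log^{p'}\la\xi\ra$ via subadditivity of the logarithm, and the extraction of $\log^{p'}\la kt\ra$, also agree with the paper. The primitive identity $F^{p'}_\eps = (p'+1)^{-1}\partial_u\log^{p'+1}(\cdot+i\eps)$ is the same idea the paper uses in the proof (there, via $z^{p'}_\eps = \partial_w[\varphi\log^{p'+1}(\cdot+i\eps)] - \varphi'\log^{p'+1}(\cdot+i\eps)$, after inserting a cutoff $\varphi$ and invoking Lemma~\ref{Fourier estimate with log}).

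Where your plan has a genuine gap is the derivation of the key Fourier bound $|\widehat{\log^{p'+1}(\cdot+i\eps)}(\zeta)| \lesssim \log^{p'}\la\zeta\ra/\la\zeta\ra$. You propose differentiating $\mathcal{F}[|u|^{-s}](\zeta) = 2\Gamma(1-s)\sin(\pi s/2)|\zeta|^{s-1}$ at $s=0$ and appealing to the vanishing $c_0 = 2\Gamma(1)\sin 0 = 0$. This identity governs $\log^m|u|$, the \emph{even} part, but the kernel here is the \emph{complex} logarithm: $\log(u+i\eps) = \log|u+i\eps| + i\arg(u+i\eps)$, whose imaginary part converges to $\pi\mathbf{1}_{u<0}$. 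Expanding $(\log|u| + i\pi\mathbf{1}_{u<0})^{p'+1}$ produces cross terms $\log^j|u|\,\mathbf{1}_{u<0}$, $0 \le j \le p'$, and the $c_0 = 0$ cancellation you rely on does not act on these one-sided distributions. They do in fact decay at the right rate (one sees $\int_0^\infty \log^j(s)\,e^{is\zeta}\,ds = \partial_z^j\bigl[\Gamma(z)(i\zeta)^{-z}\bigr]\big\vert_{z=1} = O(\log^j|\zeta|/|\zeta|)$ by a separate $\Gamma$-function asymptotic), but that is a computation you have not supplied, and it is carried by a different mechanism. The cleaner route is the one already latent in your $p'=0$ case: $\widehat{\log(\cdot+i\eps)}(\zeta) = -\tfrac{2\pi}{\zeta}e^{-\eps\zeta}\mathbf{1}_{\zeta>0}$ is a one-sided kernel uniformly $O(\la\zeta\ra^{-1})$, and the $(p'+1)$-fold self-convolution of such kernels is directly $O(e^{-\eps\zeta}\log^{p'}\zeta/\zeta)$ on $\zeta > 0$, uniformly in $\eps$; this is precisely what the paper's Lemma~\ref{Fourier estimate with log} establishes (with the cutoff $\varphi$ inserted so the iteration is an honest convolution of integrable functions rather than a product of unbounded distributions). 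Either replacing the $|u|^{-s}$ argument with this one-sided convolution argument, or supplementing it with the $\Gamma$-derivative estimate for the cross terms, would close the gap.
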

\begin{proof}
	Set 
	\begin{equation}
		\mu(v, \xi) = \int_\R h(v-w, w)e^{-iw\xi}dw.
	\end{equation}
	Take \begin{equation}
		z^{p'}_\eps(w)  = \varphi(w)\frac{\log^{p'}(w+i\eps)}{w + i\eps}
	\end{equation}
where $\varphi(w)$ is a smooth cutoff function with $\varphi(w) = 1$ on $(-L, L)$. We have 
\begin{equation}
	z^{p'}_\eps(w) = \frac{d}{dw}\big[\varphi(w)\log^{p'+1}(w + i\eps)\big] - \frac{d\varphi(w)}{dw}\log^{p'+1}(w+i\eps)
\end{equation}
It follows from Lemma \ref{Fourier estimate with log} in the Appendix that 
\begin{equation}
	|\mathcal{F}(z^{p'}_\eps)(\xi)| \lesssim 1 + \log^{p'}\la\xi\ra.
\end{equation}
We have 
	\begin{equation}
		g^{p'}_{k,\eps}(t, v) = e^{-ikvt}\int_\R \mu(v, \xi)\widehat{z^{p'}_\eps}(kt-\xi)e^{iv\xi}d\xi.
	\end{equation}
Define 
	\begin{equation}
		\alpha_{k,\eps}^{p'}(t, v) = \int_\R \mu(v, \xi)\widehat{z^{p'}_\eps}(kt-\xi)e^{iv\xi}d\xi.
	\end{equation}
	We have 
	\begin{equation}\label{eq 513}
		\widehat{\alpha^{p'}_{k,\eps}}(t, \eta) = \int_\R \widetilde{h}(\eta-\xi, \eta)\widehat{z^{p'}_\eps}(kt-\xi)d\xi.
	\end{equation}
	Here $\widetilde{h}(\xi, \eta)$ denotes the Fourier transform of $h(v, w)$ in both variables. If $p' > 0$, for any $\delta > 0$ we apply Cauchy-Schwarz inequality and get
	\begin{equation} \label{eq 515}
		\begin{split}
			&\int_\R \la\eta\ra^{2s-2\delta}\big|\widehat{\alpha^{p'}_{k,\eps}}(t, \eta)\big|^2d\eta \\ \lesssim_{s, \delta, \delta_0, p'}& \int_\R\bigg[\la\eta\ra^{2s-2\delta}\int_\R\frac{1}{\la\eta - \xi\ra^{1+2\delta_0}}\bigg|1+ \log^{2p'}\la kt-\xi\ra\bigg|d\xi\\
			&\qquad \quad \quad \times\int_\R \la\eta - \xi\ra^{1+2\delta_0} |\widetilde{h}(\eta-\xi, \eta)|^2d\xi\bigg]d\eta \\
			\lesssim_{s, \delta, \delta_0, p'}& \int_\R\int_R \la\eta\ra^{2s-\delta}\la\xi\ra^{1+2\delta_0} \bigg|1 + \log^{2p'}\la kt\ra\bigg||\widetilde{h}(\xi, \eta)|^2 d\xi d\eta \\
			\lesssim_{s, \delta, \delta_0, p'}& \big|1 + \log^{2p'}\la kt\ra \big|\norm{h(v,w)}^2_{H^{1/2+\delta_0}_vH^{s-\delta/2}_w}.
		\end{split}
	\end{equation}
 If $p' = 0$, we have for $\xi \in \R$
 \begin{equation}
     |\widehat{z^{p'}_\eps}(\xi)| \lesssim 1.
 \end{equation}
 Then (\ref{ineqqq 55}) follows directly from (\ref{eq 513}) and the Cauchy-Schwarz inequality. 
\end{proof}

Similarly, we also have the following Lemma. The idea of proof is similar to Lemma \ref{lemma 54}
\begin{lemma}\label{lemma 55}
    	Suppose $h(v, w)$ is compactly supported on $(-L, L)\times (-L, L)$ for some $L > 0$ and $h(v, w)$ is $H^{\frac{1}{2}+\delta_0}$ in $w$ and $H^s$ in $v$ for some $s \ge 0$ and $\delta_0 > 0$. For any non-negative integer $p'$, $\eps > 0$ and $k \ge 1$, set
	\begin{equation}
		g^{p'}_{k, \eps}(t, v) = \int_\R e^{-ikwt}h(v-w, w)\frac{\log^{p'}(b(0)-w+i\eps)}{b(0)-w+i\eps}dw.
	\end{equation}
	Then there exists a function $\beta_k^{p'}(t, v)$ such that $g^{p'}_{k, \eps}(t, v)$ converges to
	\begin{equation}
		g^{p'}_k(t, v) := e^{-ikb(0)t}\beta_k^{p'}(t, v).
	\end{equation}
 in distribution as $\eps \to 0+$. In addition, following estimates hold.

 \begin{enumerate}
     \item If $p' = 0$, then 
     \begin{equation}\label{ineq 55}
		\norm{\beta^{0}_{k}(t, \cdot)}_{H^{s}} \lesssim_{\delta, s, \delta_0}\norm{h(v,w)}_{H^{1/2 + \delta_0}_wH^{s}_v}.
	\end{equation}
\item If $p' > 0$, then for any $0 < \delta < \delta_0$ we have
	\begin{equation}
		\norm{\beta^{p'}_{k}(t, \cdot)}_{H^{s-\delta}} \lesssim_{\delta, s, p', \delta_0}\big|1+\log^{p'}\la kt \ra\big|\norm{h(v,w)}_{H^{1/2 + \delta_0}_wH^{s-\delta/2}_v}.
	\end{equation}
 \end{enumerate}
\end{lemma}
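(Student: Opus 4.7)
The plan is to mirror the proof of Lemma \ref{lemma 54}, exploiting the fact that after an appropriate change of variables the two lemmas differ only by a relabeling of the Sobolev indices. First, I would extract the oscillation at the singularity by writing $e^{-ikwt}=e^{-ikb(0)t}e^{-ik(w-b(0))t}$ and substituting $u=b(0)-w$, giving
\begin{equation*}
g^{p'}_{k,\eps}(t,v)=e^{-ikb(0)t}\int_\R e^{ikut}\,h(v-b(0)+u,\,b(0)-u)\,z^{p'}_\eps(u)\,du,
\end{equation*}
where $z^{p'}_\eps(u)=\varphi(u)\log^{p'}(u+i\eps)/(u+i\eps)$ is the same cutoff-regularized distribution used in Lemma \ref{lemma 54}. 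Setting $H(\alpha,u):=h(\alpha,b(0)-u)$, the assumption that $h$ is $H^{1/2+\delta_0}$ in $w$ and $H^s$ in $v$ translates directly into $H$ being $H^{1/2+\delta_0}$ in its second variable and $H^s$ in its first, so the integrand $H(v-b(0)+u,u)\,z^{p'}_\eps(u)$ now matches, modulo a translation $\tilde v=v-b(0)$, the algebraic form of the integrand treated in the proof of Lemma \ref{lemma 54}, with the singular variable $u$ playing the role that $s=v-w$ played there.

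Next I would define $\beta^{p'}_{k,\eps}(t,v)$ as the remaining integral and compute its Fourier transform in $v$. After the shift $\tilde v=v-b(0)$, the convolution theorem gives
\begin{equation*}
\widehat{\beta^{p'}_{k,\eps}}(t,\eta)=\frac{e^{-ib(0)\eta}}{2\pi}\int_\R\widetilde H(\eta,\tau-\eta)\,\widehat{z^{p'}_\eps}(-kt-\tau)\,d\tau,
\end{equation*}
and a direct change of variable on the inverse Fourier side shows $\widetilde H(\xi_1,\xi_2)=e^{-ib(0)\xi_2}\widetilde h(\xi_1,-\xi_2)$, so that Sobolev regularity of $H$ in $u$ matches that of $h$ in $w$, and likewise for the first variable. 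The bound $|\widehat{z^{p'}_\eps}(\xi)|\lesssim 1+\log^{p'}\la\xi\ra$ from Lemma \ref{Fourier estimate with log} applies verbatim, and Cauchy--Schwarz in $\tau$ together with the splitting $\log\la kt+\tau\ra\lesssim \log\la kt\ra+\log\la\tau\ra$ (exactly as in the derivation of (\ref{eq 515})) then yields the asserted $H^{s-\delta}$ bound with a $|1+\log^{p'}\la kt\ra|$ factor when $p'>0$, and the sharper $H^s$ bound without logarithmic loss when $p'=0$ by the simpler estimate $|\widehat{z^0_\eps}(\xi)|\lesssim 1$.

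Convergence of $g^{p'}_{k,\eps}$ in distribution and the existence of the limiting profile $\beta^{p'}_k$ then follow by dominated convergence on the Fourier side---since $\widehat{z^{p'}_\eps}\to\widehat{z^{p'}_0}$ pointwise as $\eps\to0+$ with the uniform logarithmic majorant---combined with the uniform $L^2$-bound just established and compact support in $v$. The main (mild) obstacle is simply careful bookkeeping in tracking the change of variable, the swapped Sobolev indices, and the harmless phase $e^{-ib(0)\eta}$ arising from the shift; once this is set up, no new analytic ideas beyond those of Lemma \ref{lemma 54} are required.
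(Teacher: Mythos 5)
Your proposal is correct and follows the paper's intended route: the paper dispatches Lemma \ref{lemma 55} with the one-line remark that it is ``similar to Lemma \ref{lemma 54},'' and you have filled in that similarity accurately by extracting the phase $e^{-ikb(0)t}$, substituting $u=b(0)-w$ so the singular kernel $z^{p'}_\eps(u)$ lives in the $w$-direction, and running the identical Plancherel/Cauchy--Schwarz argument with the roles of the two Sobolev indices swapped. The change-of-variables bookkeeping (the reflection $\widetilde H(\xi_1,\xi_2)=e^{-ib(0)\xi_2}\widetilde h(\xi_1,-\xi_2)$, the harmless phase $e^{-ib(0)\eta}$, and the log-splitting $\log\la kt+\tau\ra\lesssim\log\la kt\ra+\log\la\tau\ra$) is carried out correctly, so no gap remains.
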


\subsection{Proof of main theorems}
By (\ref{F5}) and the change of variables (\ref{change of variables}) $v = b(y)$, we have for $t \ge 0$, $v \in \R$
\begin{equation} \label{eq 50}
\begin{split}
	\phi_k(t, v) &:= \psi_k(t, \ib(v)) \\
 &= -\frac{1}{2\pi i}\lim_{\epsilon\to0+}\int_{b(0)}^{b(1)} e^{-ikwt}\bigg[\Theta^-_{k, \eps}(v-w, w) - \Theta^+_{k, \eps}(v-w, w)\bigg]dw.
\end{split}
\end{equation}
Since $\Theta^\iota_{k, \eps}(v, w)$ is smooth for $w$ outside of $[b(0), b(1)]$, we can safely rewrite (\ref{eq 50}) as
\begin{equation} \label{eq 51}
	\phi_k(t, v) = -\frac{1}{2\pi i}\lim_{\epsilon\to0+}\int_{-\infty}^\infty e^{-ikwt}\bigg[\Theta^-_{k, \eps}(v-w, w) - \Theta^+_{k, \eps}(v-w, w)\bigg]dw.
\end{equation}
By (\ref{eq 43}) we can seperate the range of $w$ into three parts to capture different type of singularities and rewrite (\ref{eq 51}) as 
\begin{equation}\label{eqqqq 522}
	\begin{split}
		\phi_k(t, v) &= -\frac{1}{2\pi i} \sum_{j = 1}^3 \lim_{\eps \to 0+}\int_{-\infty}^\infty e^{-ikwt}\bigg[\Theta^{j,-}_{k, \eps}(v-w, w) - \Theta^{j,+}_{k, \eps}(v-w, w)\bigg]dw \\
		&:= \sum_{j=1}^3\phi^j_k(t, v).
	\end{split}
\end{equation}

Using integration by parts, we get for $j\in\{1, 2, 3\}$
\begin{equation} \label{eq 510}
	\begin{split}
		&\phi^j_k(t, v) \\=& -\frac{1}{2\pi i}\frac{1}{(-ikt)^2}\lim_{\eps \to 0+}\int_\R e^{-ikwt} \frac{d^2}{dw^2}\bigg[\Theta^{j,-}_{k, \eps}(v-w, w) - \Theta^{j,+}_{k, \eps}(v-w, w)\bigg]dw \\
		=& -\frac{1}{2\pi i}\frac{1}{(-ikt)^2}\bigg\{ \lim_{\eps \to 0+}\int_\R e^{-ikwt} \bigg[\partial^2_v\Theta^{j, -}_{k, \eps}(v-w, w) - \partial^2_v \Theta^{j, +}_{k, \eps}(v-w, w)\bigg] dw \\
		&+  \lim_{\eps \to 0+}\int_\R e^{-ikwt} \bigg[2\partial_v\partial_w\Theta^{j,-}_{k, \eps}(v-w, w) - 2\partial_v\partial_w\Theta^{j,+}_{k, \eps}(v-w, w)\bigg] \\ &-\lim_{\eps \to 0+}\int_\R e^{-ikwt} \bigg[\partial^2_w\Theta^{j, -}_{k, \eps}(v-w, w) - \partial^2_w \Theta^{j,+}_{k, \eps}(v-w, w)\bigg] dw\bigg\} \\
		:=& T_1^j + T_2^j + T_3^j.
	\end{split}
\end{equation}

\subsubsection{Proof of Theorem \ref{interior regularity}}
For the new variable $v = b(y)$, we define the cutoff function $\mu^{in}(v) = \chi^{in}(v)$. As in (\ref{eqqqq 522}), we write 
\begin{equation}
    \phi_k(t, v)\mu^{in}(v) = \phi^1_k(t, v)\mu^{in}(v) + \phi^2_k(t, v)\mu^{in}(v) + \phi^3_k(t, v)\mu^{in}(v).
\end{equation}

{\bf Step 1: analysis of $T_1^2$.}  We first analyze $\phi_k^2(t, v)\mu^{in}(v)$. For $T_1^2$ defined in (\ref{eq 510}), it follows from Theorem \ref{thm 410} that there exist functions $A^{j, \iota}_{k,\eps}(v, w)$, $1 \le j \le 4$, and a remaining term $R^\iota_{k,\eps}(v, w)$ such that 
\begin{equation}
	\begin{split}
		\partial_v^2\Theta_{k,\eps}^{2, \iota}(v, w) = \frac{A^{1,\iota}_{k, \eps}(v, w)}{v+ i\iota\eps} + \sum_{j = 2}^4A^{j,\iota}_{k, \eps}(v, w) \log^{j-1}(v+i\iota\eps) + R^\iota_{k,\eps}(v, w),
	\end{split}
\end{equation}
and for any integer $m \ge 0$
\begin{equation}\label{eq 523}
    \norm{\partial_w^mA^{1,\iota}_{k, \eps}(v, w)}_{L^2_wH^{1}_{k,v}} \lesssim \norm{f_0^k}_{H^{2+m}_k},
\end{equation}
\begin{equation}\label{eq 524}
    \sum_{j=2}^4\norm{\partial_w^mA^{j,\iota}_{k, \eps}(v, w)}_{L^2_wH^1_{k,v}} + \norm{\partial_w^mR^\iota_{k,\eps}(v, w)}_{L^2_wH^1_{k,v}} \lesssim \norm{f_0^k}_{H^{3+m}_k}.
\end{equation}

By Lemma \ref{lemma 54}, there exists a function $\varpi_{2,1}^{\iota}(t,v, k)$ such that
\begin{equation}
    \lim_{\eps \to 0+} \int_{\R}e^{-ikwt}A^{1, \iota}_{k,\eps}(v-w, w)\frac{1}{v- w +i\iota\eps}dw = e^{-ikvt}\varpi_{2,1}^{\iota}(t,v, k),
\end{equation}
and for any $\delta > 0$
\begin{equation}
    \norm{\partial_v^m \varpi_2^\iota(t, \cdot, k)}_{L^2} \lesssim_{m, \delta} \norm{A^\iota_{k,\eps}(v, w)}_{H^{1/2+\delta}_vH^m_w}\lesssim_{m}\norm{f_0^k}_{H^{m+2}_k}.
\end{equation}
Set 
\begin{equation}
    \widetilde{R}^\iota_{k,\eps}(v, w) = \sum_{j = 2}^4A^{j,\iota}_{k, \eps}(v, w) \log^{j-1}(v+i\iota\eps) + R^\iota_{k,\eps}(v, w).
\end{equation}
It follows from (\ref{eq 523}) and (\ref{eq 524}) that
\begin{equation}
    \norm{\partial_w^m \widetilde{R}^\iota_{k,\eps}(v, w)}_{L^2_wL^2_v} \lesssim_{m} \norm{f_0^k}_{H^{m+3}_k}.
\end{equation}
Therefore, by change of variable $\tilde{w} = v - w$, we have 
\begin{equation}
\begin{split}
        \lim_{\eps \to 0+} \int_\R e^{-ikwt} \widetilde{R}^\iota_{k,\eps}(v - w, w)dw  &= e^{-ikvt}  \lim_{\eps \to 0+} \int_\R e^{ik\tilde{w}t} \widetilde{R}^\iota_{k,\eps}(\tilde{w}, v - \tilde{w})d\tilde{w} \\
        &:= e^{-ikvt} \varpi_{2,2}^\iota(t, v, k),
\end{split}
\end{equation}
and for any $m \ge 0$,
\begin{equation}
    \norm{\partial_v^m \varpi_{2,2}^\iota(t, \cdot, k)}_{L^2(\R)} \lesssim_{m} \norm{f_0^k}_{H^{3+m}_k(\R)}.
\end{equation}

{\bf Step 2: analysis of $T_3^2$}. For $T_3^2$, $\Theta_{k,\eps}^{2, \iota}(v, w)$ is smooth in $w$ since $w$ is supported away from the boundary. We have by change of variable $\tilde{w} = v - w$
 \begin{equation}
 \begin{split}
     &T_3^2\mu^{in}(v) \\= &\frac{1}{2\pi i (kt)^2} \lim_{\eps \to 0+}\int_\R e^{-ikwt} \bigg[\partial^2_w\Theta^{2, -}_{k, \eps}(v-w, w) - \partial^2_w \Theta^{2,+}_{k, \eps}(v-w, w)\bigg]\mu^{in}(v) dw \\
     = &\frac{e^{-ikvt}}{2\pi i(kt)^2}\lim_{\eps \to 0+} \int_\R e^{ik\tilde{w}t} \bigg[\partial^2_w\Theta^{2, -}_{k, \eps}(\tilde{w}, v - \tilde{w}) - \partial^2_w \Theta^{2,+}_{k, \eps}(\tilde{w}, v - \tilde{w})\bigg]\mu^{in}(v) d\tilde{w}.
\end{split}
 \end{equation}
 Now $v - \bar{w}$ is bounded away from $b(0)$ and $b(1)$, thus $\partial_w^2\Theta_{k,\eps}^{2, \iota}(\bar{w}, v - \bar{w})$ is smooth in $v$. It follows from Theorem \ref{thm 410} that there exists a function $\varpi_{2, 3}^\iota(t,v, k)$ such that
\begin{equation}
T^2_3 \mu^{in}(v)= \frac{1}{2\pi i}\frac{e^{-ikvt}}{k^2 t^2}\varpi_{2,3}^\iota(t,v, k)
\end{equation}
and for any integer $m \ge 0$
\begin{equation}
    \norm{\partial_v^m\varpi_{2,3}^\iota(t, \cdot, k)}_{L^2} \lesssim_{m} \norm{f_0^k}_{H^{m+3}_{k}}.
\end{equation}

{\bf Step 3: analysis of $T_2^2$}. For $T^2_2$, the singularity for $\partial_v\partial_w \Theta_{k,\eps}^{2, \iota}(v, w)$ is of $\log(v + i\iota\eps)$ type. Hence we can repeat the analysis of $T_3^2$ and deduce that there exist a function $\varpi_{2,4}^\iota(t,v, k)$ such that
\begin{equation}
T^2_2 \mu^{in}(v)= \frac{1}{2\pi i}\frac{e^{-ikvt}}{k^2 t^2}\varpi_{2,4}^\iota(t,v, k)
\end{equation}
and
\begin{equation}
    \norm{\partial_v^m\varpi_{2,4}^\iota(t, \cdot, k)}_{L^2} \lesssim_{m}\norm{f_0^k}_{H^{m+3}_{k}}.
\end{equation}

Set 
\begin{equation}
    \varpi_2(t, v, k) := \frac{1}{2\pi i}\sum_{j = 1}^4 \big[\varpi_{2,j}^-(t, v, k) - \varpi_{2,j}^+(t, v, k)\big],
\end{equation}
and 
\begin{equation}
    \alpha^{in}(t,y, k) := \varpi_2(t, b(y), k).
\end{equation}
We have 
\begin{equation}
    \psi_k^2(t, y)\chi^{in}(y) = \frac{e^{-ikb(y)t}}{k^2 t^2}\alpha^{in}(t,y, k),
\end{equation}
and for any integer $m \ge 0$,
\begin{equation}
    \norm{\partial_y^m\alpha^{in}_k(t,\cdot, k)}_{L^2} \lesssim_{m}\norm{f_0^k}_{H^{m+3}_k(\R)}.
\end{equation}

{\bf Step 4: analysis of $\psi_k^1(t, y)\chi^{in}(y)$ and $\psi_k^3(t, y)\chi^{in}(y)$}. By symmetry we only need to analyze $\psi_k^1(t, y)\chi^{in}(y)$.  For $\Theta^{1, \iota}_{k, \eps}(v-w, w)$, $w$ is supported close to $b(0)$ while $v$ is supported away from the boundary. Hence $|v - w| > \delta_0$ for some $\delta_0 > 0$. It follows from Theorem \ref{thm 49} that $\Theta^{1, \iota}_{k, \eps}(v-w, w)$ is smooth in $v$, and
\begin{equation}
    \psi_k^1(t, y)\chi^{in}(y) = \frac{e^{-ikb(0)t}}{k^2 t^2}\beta^{in}(t,y, k),
\end{equation}
with 
\begin{equation}
    \norm{\partial_y^m\beta^{in}(t, y, k)}_{L^2} \lesssim_{m} \norm{f_0^k}_{H^{m+3}_k}
\end{equation}
for any integer $m \ge 0$. This finishes the proof of Theorem \ref{interior regularity}.

\subsubsection{Proof of Theorem \ref{regularity near b(0)}}

For the variable $v = b(y)$, $y \in \R$, we define the cutoff function $$\mu^{b0}(v) := \chi^{b0}(y).$$ Similar to the proof of Theorem \ref{interior regularity}, we write for $t \ge 0, v \in \R$
\begin{equation}
    \phi_k(t, v)\mu^{b0}(v) = \big(\phi_k^1(t, v)+\phi_k^2(t, v)+\phi_k^3(t, v)\big)\mu^{b0}(v).
\end{equation}
For $\phi_k^1(t, v)$, we apply integration by parts and get
\begin{equation}
\begin{split}
    &\phi_k^1(t, v) \\=& -\frac{1}{2\pi i}\lim_{\epsilon\to0+}\int_{-\infty}^\infty e^{-ikwt}\bigg[\Theta^{1,-}_{k, \eps}(v-w, w) - \Theta^{1, +}_{k, \eps}(v-w, w)\bigg]dw \\
    =& \frac{1}{2\pi(kt)^2 i}\lim_{\epsilon\to0+}\int_{-\infty}^\infty e^{-ikwt}\frac{d^2}{dw^2}\bigg[\Theta^{1,-}_{k, \eps}(v-w, w) - \Theta^{1,+}_{k, \eps}(v-w, w)\bigg]dw
\end{split}
\end{equation}
It follows from Theorem \ref{thm 49} that
\begin{equation}\label{eq 540}
\begin{split}
    &\frac{d^2}{dw^2}\big[\Theta_{k, \eps}^{1, \iota}(v -w, w)\big] \\=~& A_{k,\eps}^\iota(v-w, w)\frac{\Upsilon_1(w)}{v-w + i\iota\eps} + B_{k,\eps}^\iota(v-w, w)\frac{\Upsilon_1(w)}{b(0) - w + i\iota\eps} \\
    &\quad+ \mathcal{R}_{2, k, \eps}^\iota(v-w, w).
\end{split}
\end{equation}
Here $A_{k,\eps}^\iota(v, w)$ is given by (\ref{major term in v}) and $B^\iota_{k, \eps}(v, w)$ is given by (\ref{major term in w}). The cutoff function $\Upsilon_1(w)$ is defined in (\ref{cutoff}). $\mathcal{R}_{2, k, \eps}^\iota(v, w)$ can be written as
\begin{equation}\label{eqqqq 548}
\begin{split}
    \mathcal{R}_{2, k, \eps}^\iota(v, w) =~& \sum_{1 \le p' + q' \le 3} M^{p', q', \iota}_{2, k,\eps}(v, w)\log^{p'}(v+i\iota\eps)\log^{q'}(b(0) - w +i\iota\eps) \\
    &+ \widetilde{R}^\iota_{2, k,\eps}(v, w),
\end{split}
\end{equation}
where 
\begin{equation}
    \sum_{1 \le p' + q' \le 3}\norm{M^{p', q', \iota}_{2, k,\eps}(v, w)}_{L^2_wH^1_{k,v}} + \norm{\widetilde{R}_{2, k, \eps}^\iota(v, w)}_{L^2_wH^1_{k,v}} \lesssim \norm{f_0^k}_{H^{3}_k}.
\end{equation}

By Theorem \ref{thm 49}, $A_{k,\eps}^\iota(v, w)$ is in $H^1_k$ for both $v$ and $w$. It follows from Lemma \ref{lemma 54} that there exist a function $\alpha_1^{\iota}(t, v, k)$ such that
\begin{equation}
    \lim_{\eps\to 0+} \int_\R e^{-ikwt}A^\iota_{k,\eps}(v-w, w)\frac{\Upsilon_1(w)\mu^{b0}(v)}{v-w + i\iota\eps}dw = e^{-ikvt}\alpha_1^{\iota}(t, v, k),
\end{equation}
and
\begin{equation}
    \norm{\alpha_1^\iota(t,\cdot,k)}_{H^1_k}\lesssim \norm{f_0^k}_{H^3_k}.
\end{equation}
Similarly, we deduce from Lemma \ref{lemma 55} that there exist a function $\beta_1^{\iota}(t, v, k)$ such that
\begin{equation}
     \lim_{\eps\to 0+} \int_\R e^{-ikwt}B^\iota_{k,\eps}(v-w, w)\frac{\Upsilon_1(w)\xi^{b0}(v)}{b(0)-w + i\iota\eps}dw = e^{-ikb(0)t}\beta_1^{\iota}(t, v, k),
\end{equation}
and,
\begin{equation}
    \norm{\beta_1^\iota(t,\cdot,k)}_{H^1_k}\lesssim \norm{f_0^k}_{H^3_k}.
\end{equation}
Set 
\begin{equation}
    \alpha_1(t, y, k) = \frac{1}{2\pi i}(\alpha_1^-(t, v, k) - \alpha_1^+(t, v, k))
\end{equation}
and
\begin{equation}
\beta_1(t, y,k) = \frac{1}{2\pi i}(\beta_1^-(t, v, k) - \beta_1^+(t, v, k)).
\end{equation}
 We have, with one more integration by part,
\begin{equation}\label{eq 545}
\begin{split}
    &\phi_k^1(t,v)\mu^{b0}(v) \\=& ~\frac{1}{(kt)^2}\big[e^{-ikvt}\alpha_1(t,v,k) + e^{-ikb(0)t}\beta_1(t,v,k)\big] \\
    &- \frac{1}{2\pi(kt)^3}\lim_{\eps \to 0+}\int_\R e^{-ikwt}\frac{d}{dw}\bigg[\mathcal{R}^-_{2,k,\eps}(v-w, w) - \mathcal{R}^+_{2,k,\eps}(v-w, w)\bigg]dw.
\end{split}
\end{equation}
For the remaining term $\mathcal{R}_{2, k, \eps}^\iota(v, w)$, it follows from (\ref{eqqqq 548}) and Theorem \ref{thm 49} that
\begin{equation}
\begin{split}
    &\frac{d}{dw} \mathcal{R}_{2, k, \eps}^\iota(v-w, w) \\
    =& \sum_{0\le p' + q' \le 2} A_{k, \eps}^{\iota, 2, p', q'}(v- w, w)\frac{\log^{p'}(v-w+i\iota\eps)\log^{q'}(b(0)-w+i\iota\eps)}{v-w+i\iota\eps} \\
    &+ \sum_{0\le p' + q' \le 2} B_{k, \eps}^{\iota, 2, p', q'}(v- w, w)\frac{\log^{p'}(v-w+i\iota\eps)\log^{q'}(b(0)-w+i\iota\eps)}{b(0)-w+i\iota\eps}\\
    &+ \mathcal{R}_{3, k, \eps}^\iota(v-w, w).
\end{split}
\end{equation}
In addition, we have 
\begin{equation}
    \sum_{0\le p' + q' \le 2}\bigg[\norm{A_{k, \eps}^{\iota, 2, p', q'}(v, w)}_{L^2_wH^{1}_{k,v}} + \norm{B_{k, \eps}^{\iota, 2, p', q'}(v, w)}_{L^2_wH^{1}_{k,v}}\bigg] \lesssim \norm{f_0^k}_{H^3_{k}} .
\end{equation}
The remaining term $\mathcal{R}_{3, k, \eps}^\iota(v-w, w)$ has singularities in the form of $\log^{p'}(v - w+i\iota\eps)\log^{q'}(b(0) - w + i\iota\eps)$ $(1 \le p'+q' \le 5)$.

In general, we can repeat the process in (\ref{eq 545}) and take one more derivative for the remaining term for each step.
We have for the $n$-th remaining term, 
\begin{equation}\label{eq 547}
\begin{split}
    &\frac{d}{dw} \mathcal{R}_{n, k, \eps}^\iota(v-w, w) \\
    =& \sum_{0\le p' + q' \le 2(n-1)} A_{k, \eps}^{\iota, n, p', q'}(v- w, w)\frac{\log^{p'}(v-w+i\iota\eps)\log^{q'}(b(0)-w+i\iota\eps)}{v-w+i\iota\eps} \\
    &+ \sum_{0\le p' + q' \le 2(n-1)} B_{k, \eps}^{\iota, n, p', q'}(v- w, w)\frac{\log^{p'}(v-w+i\iota\eps)\log^{q'}(b(0)-w+i\iota\eps)}{b(0)-w+i\iota\eps}\\
    &+ \mathcal{R}_{n+1, k, \eps}^\iota(v-w, w),
\end{split}
\end{equation}
and for $0\le p' + q' \le 2(n-1)$ the following estimate holds
\begin{equation}
    \norm{A_{k, \eps}^{\iota, n, p', q'}(v, w)}_{L^2_wH^{1}_{k,v}} + \norm{B_{k, \eps}^{\iota, n, p', q'}(v, w)}_{L^2_wH^{1}_{k,v}} \lesssim \norm{f_0^k}_{H^{n+1}_{k}} .
\end{equation}

The appearance of $\log$ terms in the numerator makes the regularity different from (\ref{eq 540}). When $q' > 0$, the term $$\frac{\log^{p'}(v-w+i\iota\eps)\log^{q'}(b(0)-w+i\iota\eps)}{v-w+i\iota\eps}$$ indicates the interaction of interior singularities and boundary singularities. Using Lemma \ref{lemma 54}, we deduce that there exists a function $\alpha_n^{p', q', \iota}(t, v, k)$ such that
\begin{equation}
\begin{split}
    &\lim_{\eps \to 0+} \int_\R e^{-ikwt}A_{k, \eps}^{\iota, n, p', q'}(v- w, w)\frac{\log^{p'}(v-w+i\iota\eps)\log^{q'}(b(0)-w+i\iota\eps)}{v-w+i\iota\eps} \\&\qquad \quad \times \xi^{b0}(v)\Upsilon_1(w)dw \\
    =~& e^{-ikvt}\alpha_n^{p', q', \iota}(t, v, k).
\end{split}
\end{equation}
To study the regularity of $\alpha_n^{p', q', \iota}(t, v, k)$, we consider the following two cases. 
\begin{enumerate}
    \item There is no interaction between interior singularities and boundary singularities. In this case, $q' = 0$. Then it follows from Lemma \ref{lemma 54} and Theorem \ref{thm 49} that for any $\delta > 0$
\begin{equation}
\begin{split}
    \norm{\alpha_n^{p', 0, \iota}(t, \cdot, k)}_{H^{1-\delta}} &\lesssim_{\delta, p',n} (1 + \log^{p'}\la t\ra) \norm{A_{k, \eps}^{\iota, n, p', 0}(v, w)}_{H^1_{k,v}H^{1}_{k,w}} \\
    &\lesssim_{\delta, p', n} (1 + \log^{p'}\la t\ra)\norm{f_0^k}_{H^{n+2}_k}.
\end{split}
\end{equation}
    \item There exists interaction between interior singularities and boundary singularities. In this case, $q' > 0$ and it follows from Lemma \ref{Fourier estimate with log} that $A_{k, \eps}^{\iota, n, p', q'}(v, w)\log^{q'}(b(0)-w+i\iota\eps)$ is only $H^{1/2-}$ in the variable $w$. Therefore, Lemma \ref{lemma 54} and Theorem \ref{thm 49} implies that
    \begin{equation}
\begin{split}
    \norm{\alpha_n^{p', q', \iota}(t, \cdot, k)}_{L^2} &\lesssim_{ p',n}(1 + \log^{p'}\la t\ra) \norm{A_{k, \eps}^{\iota, n, p', q'}(v, w)}_{H^1_{k,v}H^{1}_{k,w}} \\
    &\lesssim_{ p', q', n} (1 + \log^{p'}\la t\ra)\norm{f_0^k}_{H^{n+2}_k}.
\end{split}
\end{equation}
\end{enumerate}
Similarly, we have 
\begin{equation}
\begin{split}
    &\lim_{\eps \to 0+}\int_\R e^{-ikwt}B_{k, \eps}^{\iota, n, p', q'}(v- w, w)\frac{\log^{p'}(v-w+i\iota\eps)\log^{q'}(b(0)-w+i\iota\eps)}{b(0)-w+i\iota\eps}dw \\
    =& e^{-ikb(0)t}\beta_n^{p', q', \iota}(t, v, k).
\end{split}
\end{equation}
When $p' = 0$, we have 
\begin{equation}
    \norm{\beta_n^{0,q',\iota}(t, \cdot, k)}_{H^{1-\delta}} \lesssim_{\delta, q', n} (1 + \log^{q'}\la t\ra)\norm{f_0^k}_{H^{n+2}_k}.
\end{equation}
When $p' > 0$, we have
\begin{equation}
    \norm{\beta_n^{p',q',\iota}(t, \cdot, k)}_{L^2} \lesssim_{ p', q', n} (1 + \log^{q'}\la t\ra)\norm{f_0^k}_{H^{n+2}_k}.
\end{equation}
For $2 \le n \le N-2$, set
\begin{equation}
    \alpha_n(t,y,k) = \frac{1}{2\pi i^{n+2}}\sum_{0 \le p'+q' \le n - 1}\bigg[\alpha_n^{p', q', -}(t, b(y), k) - \alpha_n^{p', q', +}(t, b(y), k)\bigg]
\end{equation}
and
\begin{equation}
    \beta_n(t,y,k) = \frac{1}{2\pi i^{n+2}}\sum_{0 \le p'+q' \le n - 1}\bigg[\beta_n^{p', q', -}(t, b(y), k) - \beta_n^{p', q', +}(t, b(y), k)\bigg].
\end{equation}
We have
\begin{equation}
    \norm{\alpha_n(t, \cdot ,k)}_{L^2} + \norm{\beta_n(t, \cdot ,k)}_{L^2} \lesssim(1+\log^{2(n-1)}\la t\ra) \norm{f_0^k}_{H^{n+2}_k}
\end{equation}
and
\begin{equation}\label{eq 565}
\begin{split}
    &\psi_k^1(t, y)\chi^{b0}(y) \\= ~&e^{-ikb(y)t}\sum_{n = 1}^{N-2}\frac{\alpha_n(t,y,k)}{(kt)^{n+1}} + e^{-ikb(0)t}\sum_{n = 1}^{N-2}\frac{\beta_n(t,y,k)}{(kt)^{n+1}} + \frac{R_{N-2}(t,y,k)}{(kt)^{N-1}}.
\end{split}
\end{equation}
Here 
\begin{equation}
    R_{N-2}(t,y,k) = \lim_{\eps\to 0+}\frac{1}{2\pi i^{N}}\int_\R e^{-ikwt}\big[\mathcal{R}^-_{N-1, k, \eps}(b(y)-w, w) - \mathcal{R}^+_{N-1, k, \eps}(b(y)-w, w)\big] dw.
\end{equation}Noting that $\mathcal{R}^\iota_{N-1, k, \eps}(v-w, w)$ only has log-type singularity, we have 
\begin{equation}
    \norm{R_{N-2}(t, \cdot, k)}_{H^1_k}\lesssim_{N}(1 + \log^{2(N-3)}\la t\ra)\norm{f_0^k}_{H^N_k}.
\end{equation}

For $\psi_k^2(t, y)\chi^{b0}(y)$ and $\psi_k^3(t, y)\chi^{b0}(y)$, decomposition (\ref{eq 565}) still holds since we can keep taking integration by parts and applying Lemma \ref{lemma 54} and Lemma \ref{lemma 55}. But in this case, $v$ and $w$ are supported on non-overlapping intervals. From the proof of Theorem \ref{interior regularity}, we know that $\psi_k^2(t, y)\chi^{b0}(y)$ and $\psi_k^3(t, y)\chi^{b0}(y)$ are smooth in $y$. Therefore, when $y$ is near the boundary $0$, only $\psi^1_k(t,y)$ contributes singularity in $y$. 

If we further assume that $\omega_0(0) = 0$, that is, $f_0^k(b(0)) = 0$ after change of variables, then from Proposition \ref{vanish} we know that $\partial_w \Theta^{1, \iota}_{k, \eps}(v, w)$ is in $H^1_k$ for both variables $v$ and $w$. In addition, the major term $B^\iota_{k,\eps} = 0$ in (\ref{eq 540}). In this situation, we have
\begin{equation}\label{eq: 572}
\begin{split}
     &\frac{d}{dw} \big[\Theta^{1, \iota}_{k, \eps}(v - w, w)\big] \\
     = &~2\Psi_k(v)\frac{\partial_vB(v)}{B(v)}\Theta^{1, \iota}_{k, \eps}(v - w, w) \log(v-w+i\eps) + R_{k,\eps}(v-w, w),
\end{split}
\end{equation}
and
$R_{k,\eps}(v, w)$ has the $\mathcal{F}^{b_0}_{1,1,k,\eps}(f_0^k,2,N)$ type singularity structure.
Now $\alpha_1^{0,0,\iota}(t, v, k)$ is given by
\begin{equation}
    \alpha_1^{0,0,\iota}(t, v, k) = \lim_{\eps \to 0 +}2\Psi(v)\frac{\partial_vB(v)}{B(v)} \int_\R e^{-ikwt}\frac{\Theta^{1, \iota}_{k, \eps}(v - w, w)}{v - w +i\iota\eps} dw.
\end{equation}
Now $\Theta^{1, \iota}_{k, \eps}(v, w)$ is $H^1_k$ in $v$ and $H^2_k$ in $w$. It follows from Lemma \ref{lemma 54} that 
\begin{equation}
    \norm{\alpha_1^{0,0,\iota}(t, \cdot, k)}_{H^2_k} \lesssim \norm{f_0^k}_{H^4_k}.
\end{equation}
$\beta_1^{0,0,\iota}(t, v, k) = 0$ since the major term $B^\iota_{k,\eps} = 0$. 

If we take two more derivatives in $w$ to the equation \eqref{eq: 572}, we observe that there will be no interaction of $\log$ singularities. Therefore, $\alpha_2$ and $\beta_2$ are in $H^1_k$.

\bibliographystyle{plain}
\bibliography{thesis}

\newpage
\appendix
\section{Technical lemmas and proof}

\begin{lemma} \label{Fourier estimate with log}
	Assume $f(x) \in H^1(\R)$ and $f$ is supported on $(-L, L)$ for some $L > 0$. For integer $m \ge 1$ and $\eps \in (-1, 1)\setminus \{0\}$, let 
	\begin{equation}
		h_{m, \eps}(x) = f(x)\log^m(x + i\eps).
	\end{equation}
	We have the following decay estimate of the Fourier transform of $h_{m, \eps}$: for $\xi \in \R$,
	\begin{equation} \label{estimate 10}
		|\widehat{h}_{m, \eps}(\xi)| \lesssim_L \norm{f}_{H^1}\frac{1 + \log^{m-1}\la \xi\ra}{\la \xi \ra}.
	\end{equation}
\end{lemma}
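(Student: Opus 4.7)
I would prove the estimate by induction on $m \ge 1$, splitting into low and high frequencies. For $|\xi| \le 1$, the trivial $L^1$ bound together with the uniform-in-$\eps$ local integrability
$$\int_{-L}^{L}|\log(x+i\eps)|^m\,dx \lesssim_{L,m} 1$$
and the one-dimensional embedding $H^1(\R)\hookrightarrow L^\infty(\R)$ give
\begin{equation*}
|\widehat{h}_{m,\eps}(\xi)| \le \|h_{m,\eps}\|_{L^1} \lesssim_{L,m} \|f\|_{H^1},
\end{equation*}
which is consistent with the claim when $\la\xi\ra\lesssim 1$.

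For $|\xi|\ge 1$, I would integrate by parts once (legal since $f$ is compactly supported) to obtain
\begin{equation*}
i\xi\,\widehat{h}_{m,\eps}(\xi) = \widehat{f'\log^m(\cdot+i\eps)}(\xi) + m\,\widehat{\tfrac{f\log^{m-1}(\cdot+i\eps)}{\cdot+i\eps}}(\xi).
\end{equation*}
The first summand has $L^1$-norm at most $\|f'\|_{L^2}\|\log^m(\cdot+i\eps)\|_{L^2(-L,L)} \lesssim \|f\|_{H^1}$ uniformly in $\eps$, giving the expected $O(\|f\|_{H^1}/|\xi|)$ contribution without any logarithmic loss. For the second summand, I would decompose $f = f(0)\chi + g$, where $\chi\in C_c^\infty(\R)$ equals $1$ near the origin and $g(0)=0$, so that $\|g\|_{H^1}\lesssim \|f\|_{H^1}$. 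The $g$-contribution is handled by Hardy's inequality, which gives $\|g/(\cdot+i\eps)\|_{L^2}\lesssim \|g\|_{H^1}$ uniformly in $\eps$; combined with the $L^2$-control of $\log^{m-1}(\cdot+i\eps)$ on the support of $g$, this yields an $L^1$-bound of the corresponding Fourier transform by $\|f\|_{H^1}$.

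The remaining, fixed quantity $|\widehat{\chi\log^{m-1}(\cdot+i\eps)/(\cdot+i\eps)}(\xi)|$ I would bound by a parallel induction on $m$, using the explicit convolution representation $\widehat{1/(\cdot+i\eps)}(\xi) = -2\pi i\,e^{-\eps\xi}H(\xi)$ (for $\eps>0$, with conjugate sign for $\eps<0$), the identity $\log^{m-1}(x+i\eps)/(x+i\eps) = \tfrac{1}{m}\partial_x\log^m(x+i\eps)$, and integration by parts. The main obstacle is achieving uniformity in $\eps$ throughout: a naive convolution bound that pairs the inductive pointwise control $|\widehat{\chi\log^{m-1}(\cdot+i\eps)}(\zeta)| \lesssim (1+\log^{m-2}\la\zeta\ra)/\la\zeta\ra$ with the kernel $e^{-\eps\eta}H(\eta)$ produces a spurious $\log(1/|\eps|)$ factor in the regime $|\eps\xi|\ll 1$. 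I would avoid this by exploiting that $\hat\chi$ is Schwartz, which localizes the convolution variable near $\eta=\xi$ and generates exactly one additional $\log\la\xi\ra$ per induction step, independent of $\eps$. Combining the two inductions yields the desired bound for $|\xi|\ge 1$.
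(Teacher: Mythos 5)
Your strategy (integrate by parts once, split $f=f(0)\chi+g$, treat the smooth piece and the Hardy piece, and isolate the ``constant'' piece $f(0)\chi$) is genuinely different from the paper's. The paper first proves the base estimate $|\mathcal{F}(\varphi\log(\cdot+i\eps))(\xi)|\lesssim 1/\la\xi\ra$ directly (via $i\xi\,\mathcal{F}(\log(\cdot+i\eps))(\xi)=-2\pi i e^{-\eps\xi}\mathbf{1}_{\xi>0}$ and a careful convolution with $\hat\varphi$), and then inducts on $m$ by convolving $\mathcal{F}(f\log^{m})$ with $\mathcal{F}(\log(\cdot+i\eps))$; the crucial point is that the latter decays like $1/|\eta|$ uniformly in $\eps$. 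Your treatment of the $f'\log^m$ term and the $g$-piece via Hardy is correct and uniform in $\eps$, and in fact yields a bound without any logarithmic loss for those pieces.

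The gap is in the ``parallel induction'' for $Q_{m,\eps}(\xi):=\mathcal{F}\bigl(\chi\log^{m-1}(\cdot+i\eps)/(\cdot+i\eps)\bigr)(\xi)$. You acknowledge that the naive convolution with $\widehat{1/(\cdot+i\eps)}(\eta)=-2\pi i\,e^{-\eps\eta}\mathbf{1}_{\eta>0}$ produces a spurious $\log(1/|\eps|)$, and propose to avoid it ``by exploiting that $\hat\chi$ is Schwartz.'' That rescue only works at the bottom level $m=1$, where the object being convolved with $e^{-\eps\eta}\mathbf{1}_{\eta>0}$ is $\hat\chi$ itself. For $m\ge 2$ the relevant function is $\mathcal{F}\bigl(\chi\log^{m-1}(\cdot+i\eps)\bigr)$, which by the very estimate you are proving decays only like $(1+\log^{m-2}\la\cdot\ra)/\la\cdot\ra$ --- it is not Schwartz, is not in $L^1$, and the integral $\int_0^{1/\eps}(1+\log^{m-2}\la\xi-\eta\ra)/\la\xi-\eta\ra\,d\eta$ genuinely produces $\log^{m-1}(1/\eps)$ for $|\xi|\ll 1/\eps$, so the kernel $e^{-\eps\eta}\mathbf{1}_{\eta>0}$ cannot be tolerated. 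The alternative route you mention --- using $\log^{m-1}(x+i\eps)/(x+i\eps)=\tfrac1m\,\partial_x\log^m(x+i\eps)$ plus integration by parts --- produces the relation
\begin{equation*}
i\xi\,\mathcal{F}\bigl(\chi\log^m(\cdot+i\eps)\bigr)(\xi)=\mathcal{F}\bigl(\chi'\log^m(\cdot+i\eps)\bigr)(\xi)+m\,Q_{m,\eps}(\xi),
\end{equation*}
but this is exactly the same identity you obtain by applying the original integration by parts to $\chi\log^m$ rather than to $f\log^m$; rearranging it to express $Q_{m,\eps}$ in terms of $\mathcal{F}(\chi\log^m)$ and substituting back is a tautology and yields no new information. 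The auxiliary estimate on $\mathcal{F}(\chi\log^m(\cdot+i\eps))$ therefore still needs an independent argument, which is precisely what the paper supplies by convolving with $\mathcal{F}(\log(\cdot+i\eps))$ --- a kernel that, unlike $\mathcal{F}(1/(\cdot+i\eps))$, has $1/|\eta|$ decay uniformly in $\eps$. As written, your plan does not close this loop.
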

\begin{proof}
Without loss of generality, assume $\eps > 0$. To make notations simple, set
\begin{equation}
    u_\eps(x) := \log(x + i\eps).
\end{equation}
Applying the residue theorem, we get 
\begin{equation}\label{eq a1}
   i \xi \mathcal{F}(u_\eps)(\xi) =   \int_\R \frac{1}{x+i\eps}e^{-ix\xi}dx = -2\pi i e^{-\eps \xi} \mathbf{1}_{\xi > 0}.
\end{equation}
Hence for $|\xi| \ge 1$,
\begin{equation}
    |\mathcal{F}(u_\eps)(\xi)| \lesssim \frac{1}{|\xi|}.
\end{equation}

Let $\varphi(x)$ be a smooth function supported on $(-2L, 2L)$ such that $\varphi(x) = 1$ on $(-L, L)$. We first show that 
\begin{equation}\label{base estimate}
	|\mathcal{F}(\varphi u_\eps)(\xi)| \lesssim_L \frac{1}{\la \xi\ra}.
\end{equation}

Let $\chi(x)$ be a smooth cutoff function supported on $(-2, 2)$ such that $\chi(x) = 1$ on $(-1, 1)$. We have 
\begin{equation}
    \begin{split}
        \mathcal{F}(\varphi u_\eps) &= \int_\R \widehat{\varphi}(\xi - \eta)\chi(\eta) \widehat{u_\eps}(\eta)d\eta + \int_\R \widehat{\varphi}(\xi - \eta)(1 - \chi(\eta)) \widehat{u_\eps}(\eta)d\eta \\
        &:= T_1 + T_2.
    \end{split}
\end{equation}

Assume first $|\xi| \ge 10$. For $T_1$, since $\eta$ is supported on $(-2, 2)$, $\varphi$ is a Schwartz function and $\widehat{u_\eps}$ is a tempered distribution, we have for any integer $N \ge 1$,
\begin{equation}
    |T_1(\xi)| \lesssim_{N, L} |\xi|^{-N}.
\end{equation}
For $T_2$, we have 
\begin{equation}
\begin{split}
        |T_2(\xi)| &\lesssim \int_{|\eta|\ge 1}|\widehat{\varphi}(\xi - \eta)||\widehat{u_\eps}(\eta)|d\eta \\
        &\lesssim \int_{|\eta|\ge 1}|\widehat{\varphi}(\xi - \eta)| \frac{1}{|\eta|}d\eta \\
        &= \bigg(\int_{1 \le |\eta| \le |\xi|/2} + \int_{|\eta| \ge |\xi|/2}\bigg) \frac{|\widehat{\varphi}(\xi - \eta)|}{|\eta|}d\eta \\
        &:= T_{21} + T_{22}.
\end{split}
\end{equation}
For $T_{21}$ we have 
\begin{equation}
\begin{split}
        |T_{21}(\xi)| &\lesssim |\widehat{\varphi}(\xi / 2)|\int_{1 \le |\eta| \le |\xi|/2} \frac{1}{|\eta|}d\eta \\
        &\lesssim \frac{1}{\la \xi \ra}.
\end{split}
\end{equation}
For $T_{22}$ we have
\begin{equation}
    \begin{split}
        |T_{22}(\xi)| &\lesssim \frac{1}{|\xi|} \int_{\R}|\widehat{\varphi}(\eta)|d\eta \\
        &\lesssim \frac{1}{|\xi|}.
    \end{split}
\end{equation}

For $|\xi|\le 10$, we note that 
\begin{equation}
    |\mathcal{F}(\varphi u_\eps)(\xi)| \lesssim \norm{\varphi u_\eps}_{L^1} \lesssim 1.
\end{equation}
Hence for $\xi \in \R$, we have
\begin{equation}
    |\mathcal{F}(\varphi u_\eps)(\xi)| \lesssim \frac{1}{\la \xi\ra}.
\end{equation}

We next show that if $f \in H^1(\R)$ and is supported on $(-L, L)$, we have
\begin{equation}
    |\mathcal{F}(f u_\eps)(\xi)| \lesssim \frac{\norm{f}_{H^1}}{\la \xi\ra}.
\end{equation}
In fact, 
\begin{equation}
\begin{split}
        |\mathcal{F}(f u_\eps))(\xi)| &= |\mathcal{F}(f \varphi u_\eps)(\xi)| \\
        &= |\int_\R \widehat{f}(\xi - \eta) \widehat{\varphi u_\eps}(\eta) d\eta|\\
        &\lesssim \norm{f}_{H^1}\bigg(\int_\R  \frac{1}{\la \xi - \eta\ra^2}\frac{1}{\la \eta\ra^2} d\eta\bigg)^{\frac{1}{2}}\\
        &\lesssim \frac{\norm{f}_{H^1}}{\la \xi\ra}.
\end{split}
\end{equation}

Assume now for some $m_0 \ge 1$, the estimate \eqref{estimate 10} holds for $m = m_0$. For the case $m = m_0 + 1$, we have
\begin{equation}
\begin{split}
        &~~~~|\mathcal{F}(f(x) \log^{m_0+1}(x+i\eps))(\xi)| \\
        &= |\mathcal{F}(f(x) \log^{m_0}(x+i\eps)u_\eps(x))(\xi)| \\
        &= |\int_\R \mathcal{F}\big(f(x)\log^{m_0}(x+i\eps)\big)(\xi -\eta) \cdot \widehat{u_\eps}(\eta)d\eta| \\
        &\lesssim_L \norm{f}_{H^1}\int_\R \frac{1 + \log^{m_0 - 1}\la \xi - \eta\ra}{\la \xi - \eta\ra} \frac{1}{\la \eta\ra}d\eta\\
        &= \bigg(\int_{|\eta|<|\xi| / 2} + \int_{|\xi|/2 \le |\eta| \le 2 |\xi|} + \int_{|\eta| > 2|\xi|}\bigg)\frac{1 + \log^{m_0 - 1}\la \xi - \eta\ra}{\la \xi - \eta\ra} \frac{1}{\la \eta\ra}d\eta \\
        &:= T_3 + T_4 + T_5.
\end{split}
\end{equation}

For $T_3$, we have
\begin{equation}
\begin{split}
        |T_3| &\lesssim_L \norm{f}_{H^1}\frac{1 + \log^{m_0 - 1}\la \xi \ra}{\la \xi \ra} \int_{|\eta|<|\xi| / 2} \frac{1}{\la \eta\ra} d\eta \\
        &\lesssim_L \norm{f}_{H^1}\frac{1 + \log^{m_0}\la \xi \ra}{\la \xi \ra}.
\end{split}
\end{equation}
For $T_4$, we have
\begin{equation}
    \begin{split}
        |T_4| &\lesssim_L \frac{\norm{f}_{H^1}}{\la \xi \ra}\int_{\frac{|\xi|}{2} < |\eta|\le 2|\xi|} \frac{1 + \log^{m_0 - 1}\la \xi - \eta \ra}{\la \xi - \eta\ra}d\eta \\
			&\lesssim_L\frac{\norm{f}_{H^1}}{\la \xi \ra} \int_{|\eta| \le 3|\xi|} \frac{1 + \log^{m_0-1}\la \eta \ra}{\la \eta\ra}d\eta \\
			&\lesssim_L \norm{f}_{H^1}\frac{1 + \log^{m_0}\la\xi\ra}{\la\xi \ra}.
    \end{split}
\end{equation}
For $T_5$, we have
\begin{equation}
    \begin{split}
        |T_5| &\lesssim \norm{f}_{H^1}\int_{|\eta| \ge 2|\xi|} \frac{1 + \log^{m_0-1}\la \eta \ra}{\la \eta \ra^2} d\eta \\
		&\lesssim \norm{f}_{H^1}\frac{1 + \log^{m_0-1}\la \xi\ra}{\la \xi\ra}.
    \end{split}
\end{equation}

Therefore, the estimate \eqref{estimate 10} holds for the case $m = m_0 + 1$. This finishes the proof of the lemma.
\end{proof}

\end{document}